\documentclass[12pt,reqno]{amsart}
\usepackage{mathrsfs}
\usepackage{amsgen}
\usepackage{amscd}
\usepackage{amsmath}
\usepackage{latexsym}
\usepackage{amsfonts}
\usepackage{amssymb}
\usepackage{amsthm}
\usepackage{graphicx}
\usepackage{color}
\usepackage{amsthm,amssymb}
\usepackage{graphicx}
\usepackage{enumerate}
\usepackage{amssymb,amsmath,graphicx,amsfonts,euscript}
\usepackage{color}
\usepackage{mathrsfs}
\usepackage{empheq}
\usepackage{cases}
\usepackage{cite}

\usepackage[colorlinks=true,backref]{hyperref}
\hypersetup{urlcolor=blue, citecolor=green, linkcolor=blue}

%%%%%%%%%%%%%Í¶žåÐèÒª, ÁíÍ¶ÉŸ³ý%%%%%%%%%%%%%%%%%%%%%%%%%%%%%%%%%%%%%%%%%%%%%%%%%%%%%%
%%%%%%%%%%%%%%%%%%%%%%%%%%%%%%%%%%%%%%%%%%%%%%%%%%%%%%%%%%
%\usepackage{etoolbox} %% <- for \cspreto, \csappto
%
%%% Patch 'normal' math environments:
%\newcommand*\linenomathpatch[1]{%
	%  \cspreto{#1}{\linenomath}%
	%  \cspreto{#1*}{\linenomath}%
	%  \csappto{end#1}{\endlinenomath}%
	%  \csappto{end#1*}{\endlinenomath}%
	%}
%
%\linenomathpatch{equation}
%\linenomathpatch{gather}
%\linenomathpatch{multline}
%\linenomathpatch{align}
%\linenomathpatch{alignat}
%\linenomathpatch{flalign}
%%%%%%%%%%%%%%%%%%%%%%%%%%%%%%%%%%%%%%%%%%%%%%%%%%%%%%%%%%%%%%%%%%%%%%%%%%%%%%%%%%%%%%%%%%%

\setlength{\parindent}{2em}

\usepackage[margin=3cm, a4paper]{geometry}

\def\H{{\cal H}}

\def\N{\mathbb{N}}
\def\R{\mathbb{R}}

\def\T{\mathbb{T}}
\def\C{\mathbb{C}}

\def\H2{H^2(\R^N)}
\def\L2{L^2(\R^N)}
\def\to{\rightarrow}

\newcommand{\dt}{\,\mathrm{d}t}

\parindent = 20 pt
\parskip = 5 pt

\textwidth 6.5in \textheight 9.2in \setlength{\topmargin}{0.1in}
\addtolength{\topmargin}{-\headheight}
\addtolength{\topmargin}{-\headsep}

\setlength{\oddsidemargin}{0in} \oddsidemargin  0.0in
\evensidemargin 0.0in

 \def\norm#1{\|#1\|}
\def\normb#1{\bigg\|#1\bigg\|} 

\def\H{{\cal H}}

\def\H1{H^1(\R)}

\newcommand{\al}{\alpha}

\newcommand{\re}{\mathop{\mathrm{Re}}}

   \newcommand{\I}{\infty}

 \newcommand{\Del}[1]{}

\numberwithin{equation}{section}

\newtheorem{thm}{Theorem}[section]

\newtheorem{lem}[thm]{Lemma}
\newtheorem{prop}[thm]{Proposition}
\newtheorem{definition}[thm]{Definition}

\theoremstyle{remark}
\newtheorem{remark}[thm]{Remark}
\newtheorem*{exam*}{Examples}

\newcommand{\EQ}[1]{\begin{align*}\begin{split} #1 \end{split}\end{align*}}
\setlength{\marginparwidth}{2cm}
\newcommand{\EQn}[1]{\begin{align}\begin{split} #1 \end{split}\end{align}}
\setlength{\marginparwidth}{2cm}

\setlength{\marginparwidth}{2cm}
\newcommand{\EQnnsub}[1]{\begin{subequations}\begin{align} #1 \end{align}\end{subequations}}
\setlength{\marginparwidth}{2cm}

%Double symbols in fmls
\def\norm#1{\left\|#1\right\|}

\def\normb#1{\big\|#1\big\|}
\def\normbb#1{\Big\|#1\Big\|}

%upper-,super- and sub-script symbols in fmls

%Operator symbols in fmls

\def\pd{\partial}

%Conjunction in fmls
\newcommand{\ra}{{\rightarrow}}

\def\lsm{\lesssim}

% Various capital English letter

% Texts in fml
\newcommand{\sgn}{{\mbox{sgn}}}

% Greek letter

\def\al{\alpha}

\begin{document}

	\setcounter{page}{1}

	\title[NLS with rough potential]{Regularization for the Schr\"{o}dinger equation with rough potential: one-dimensional case}

\author{Ruobing Bai}
\address{Ruobing Bai \newline School of Mathematics and Statistics\\
Henan University\\
Kaifeng 475004, China}
\email{baimaths@hotmail.com}
\thanks{}

	\author{Yajie Lian}
	\address{Yajie Lian \newline Center for Applied Mathematics\\
		Tianjin University\\
		Tianjin 300072, China}
	\email{yjlian@tju.edu.cn}
	\thanks{}

	\author{Yifei Wu}
	\address{Yifei Wu  \newline School of Mathematical Sciences\\
		Nanjing Normal University\\
		Nanjing 210046, China}
	\email{yerfmath@gmail.com}
	\thanks{}
	\subjclass[2010]{Primary  35Q55; Secondary 35B40}
	
	%\date{\today}
	
	\keywords{Nonlinear Schr\"odinger equation, rough potential, global well-posedness, ill-posedness}

	\begin{abstract}\noindent
		In this work, we investigate the following Schr\"odinger equation with a spatial potential
		\begin{align*}
			i\partial_t u+\pd_x^2 u+\eta u=0,
		\end{align*}
		where   $\eta$ is a given spatial potential (including the  delta potential and $|x|^{-\gamma}$-potential). Our goal is to provide the regularization mechanism of this model when the potential $\eta\in L_x^r+L_x^\infty$ is rough. In this paper, we mainly focus on one-dimensional case and establish the following results:

1) When the potential $\eta \in L_x^1+L_x^\infty(\R)$, then the solution is in $H_x^{\frac 32-}(\R)$; however, there exists some $\eta \in L_x^1+L_x^\infty(\R)$ such that the solution is not in $H_x^{\frac 32}(\R)$;

2) When the potential $\eta \in L_x^r+L_x^\infty(\R)$ for $1<r\leq 2$, then the solution is in $H_x^{\frac 52-\frac 1r}(\R)$; however, there exists some $\eta \in L_x^r+L_x^\infty(\R)$ such that the solution is not in $H_x^{\frac 52-\frac 1r+}(\R)$;

3) When the potential $\eta \in L_x^r+L_x^\infty(\R)$ for $r>2$, then the solution is in $H_x^{2}(\R)$; however, there exists some $\eta \in L_x^r+L_x^\infty(\R)$ such that the solution is not in $H_x^{2+}(\R)$.

Hence, we provide a complete classification of the regularity mechanism.
Our proof is mainly based on the application of the commutator, local smoothing effect and normal form method. Additionally, we also discuss, without proof, the influence of the existence of nonlinearity on the regularity of solution.
		
	\end{abstract}
	
	\maketitle

%\maketitle
% \tableofcontents
%
%\newpage
	
	\section{Introduction}
	\vskip 0.2cm

	In this paper, we study the following linear Schr\"odinger equation with a ``rough" spatial potential
	\begin{equation}\label{eq:NLS}
		\left\{ \aligned
		&i\partial_t u(t, x)+\pd_x^2 u(t, x)+\eta(x) u(t,x)=0,
		\\
		&u(0,x)=u_0(x),
		\endaligned
		\right.
	\end{equation}
	where $u(t, x):\R^+\times \R\rightarrow \C$ is an unknown function,  $\eta:\R\rightarrow \C$ is a given spatial potential.

	%The solution satisfies
	%the conservation of mass and energy, defined respectively by
	%\begin{align}\label{1.2}
	%	M\big(u(t)\big)\triangleq\int_{\R^d}|u(t,x)|^2dx=M(u_0),
	%\end{align}
	%and
	%\begin{align}\label{1.3}
	%	E\big(u(t)\big)\triangleq\int_{\R^d}\Big[|\nabla u(t,x)|^2dx-\eta(x)|u(t,x)|^2+\frac{\lambda}{2}|u(t,x)|^{p+1}\Big]dx=E(u_0).
	%\end{align}

	The equation \eqref{eq:NLS} has a rich physical background and arises in the mathematical description of phenomena in nonlinear optics and plasma physics. In particular, the equation \eqref{eq:NLS} is often regarded as the disordered Schr\"odinger equation with $\eta(x)$ a given complex-valued random/rough enough potential, which can describe the phenomena known as Anderson localization \cite{Anderson-1958}. The Anderson localization has been widely applied in various fields such as Metal-Insulator Transition, superconductors, suppressing epileptic seizures and so on.

The general form of the equation \eqref{eq:NLS} with a nonlinearty is the following
\begin{equation}\label{eq:NLS-2}
		\left\{ \aligned
		&i\partial_t u(t, x)+\pd_x^2 u(t, x)+\eta(x) u(t,x)=\lambda |u(t,x)|^{p} u(t,x),
		\\
		&u(0,x)=u_0(x),
		\endaligned
		\right.
	\end{equation}
where $u(t, x):\R^+\times \R\rightarrow \C$ and $\lambda \in \R$. The case $\lambda>0$ is referred to the defocusing case, and the case $\lambda<0$ is referred to the focusing case.
	
	%More generally, with $\eta$ a spatial random/rough enough potential, the equation \eqref{eq:NLS} is often considered as the disordered NLS, which can describe the
	%so-called Anderson localization: In 1958, Philip W. Anderson discovered that the waves in the solution of the linear and lattice Schr\"odinger model will be localized when the %potential $\eta$ is spatially random/rough enough \cite{Anderson-1958}.
	
	%The NLS equation \eqref{eq:NLS} with $\eta(x)=0$ is regarded as the following classical NLS equation,
	%\begin{equation}\label{eq:NLS-1}
	%	\left\{ \aligned
	%	&i\partial_t u(t, x)+\Delta u(t, x)=\lambda |u(t,x)|^{p} u(t,x),
	%	\\
	%	&u(0,x)=u_0(x),
	%	\endaligned
	%	\right.
	%\end{equation}
	%which is locally well-posed in $H_x^s(\R^d)$ for $s\geq 0$, see \cite{Cazenave-Weissler-1990}. In addition, there is a large amount of literature devoted to the well-posedness theory for this equation, see for examples \cite{Cazenave--03, Tao-2006}.
	
	%When the NLS equation \eqref{eq:NLS} with the inverse-square potential, i.e. $\eta=\frac 1 {|x|^2}$, the well-posedness and scattering theory for this equation has been widely studied.
In this paper, we aim to study the regularization mechanism of the Schr\"odinger equation when the potential $\eta\in L_x^r+L_x^\infty$ is rough. The regularity of solutions is one of the core issues in the study of the equation \eqref{eq:NLS-2} when the potential is irregular, which reveals how the interplay between nonlinearity, smooth initial conditions, and roughness of the potential influences the localization properties of the system. Moreover, as shown in \cite{M-Wu-Z24}, the regularity properties play a key role in designing and analyzing numerical schemes for approximating solutions, where the smoothness ensures the convergence and accuracy of computational methods.

	With $\eta$ a general spatial random/rough enough potential, there are only a few results of regularition theory for the equation \eqref{eq:NLS-2}. Below, we briefly review some theoretical results of the NLS equation \eqref{eq:NLS-2}. Cazenave \cite{Cazenave--03} proved the equation \eqref{eq:NLS-2} is globally well-posed in $H_x^1(\R^d)$ for small initial data, with the potential $\eta \in L_x^{\infty}(\R^d)$ real-valued when $d\geq 1$. Additionally, Cazenave also established the local well-posedness of the equation \eqref{eq:NLS-2} in $H_x^2(\R^d)$, with the potential $\eta \in L_x^2+L_x^\infty(\R^d)$ when $d\geq 1$. %\footnote{This result is not true for $d>4$, see Theorem \ref{theorem 4} below.}.
For potentials that are stochastic in time but rather regular in space, Bouard and Debussche \cite{Bouard-De-99} studied the stochastic NLS with multiplicative noise and showed that for some subcritical nonlinearities the $L^2(\R^d)$ solution is almost surely global and unique by using the fixed point argument.
	%where the critical exponent is the same as in the deterministic theory in dimension $d = 1 \mbox{ or } 2$, but more restrictive if $d\geq 3$.
	With $\eta$ white noise in space, Debussche and Weber \cite{De-We-19} obtained that the defocusing NLS equation \eqref{eq:NLS-2} has a global solution almost surely in $H^1(\T^2)$ for smooth initial data, and that the focusing NLS equation \eqref{eq:NLS-2} admits the same result under the additional smallness condition, which is based on a renormalization of this equation and the conserved quantities. Subsequently, Debussche and Martin \cite{De-Ma-19} extended these techniques to the subcritical defocusing NLS equation with white noise on the full space $\R^2$, and obtained that if $p<2$ then this equation has a local solution almost surely in some weighted Besov space, and if $p<1$ then the solution is global. Moreover, the interesting work by Babin, Ilyin and Titi \cite{Babin-Ilyin-Titi-2011} established the unconditional well-posedness results for the periodic KdV equation in $\dot{H}^s$, $s\geq 0$, which provided a new insight into regularization mechanisms for nonlinear dispersive partial differential equations (PDEs) in the periodic setting.
 %used the methods applied in \cite{De-We-19}

	For a typical potential, i.e. $\eta=\delta$, the corresponding NLS equation reads as
	\begin{align*}
		i\partial_t u+\pd_x^2u+\delta u+\lambda|u|^{p} u=0.
	\end{align*}
	The well-posedness of this equation is known only in $H^1$. Specifically, Goodman, Holmes, and Weinstein \cite{Goodman-Holmes-Weinstein-2004} proved this equation with $\lambda>0$ and $p=2$ is globally well-posed in $H^1(\R)$ by using the boundedness of Schr\"{o}dinger wave operator on $H^1(\R)$ (see \cite{Weder-1999}). Later, in \cite{Fu-Oh-Oz-08}, Fukuizumi, Ohta, and Ozawa further proved that this equation with $\lambda>0$ and $0<p<4$ is globally well-posed in $H^1(\R)$ by the Gagliardo-Nirenberg inequality and the conservation laws.  Moreover, it can be shown that the solution of this equation exhibits a shock at the origin. In fact, we can prove that
	 $$u_x(t, 0+) - u_x(t, 0-) = -u(t, 0).$$
This implies that $u\notin H^\frac32$ when the initial data $u_0$ satisfying  $u_0(0)\ne 0$. Therefore, it is naturally questioned whether $H^1$ is the highest regularity achievable for the solution with smooth initial data.
	% by Gagliardo-Nirenberg inequality and the conservation laws. %{\color{red}For less regular spatial random potentials, the theory of the spectrum of the linear operator $-\partial_x^2+\eta(x)$ can be referred to \cite{Albeverio-Gesztesy-Hegh-Krohn-Holden-1988, Dumaz-Labbe-20, Germinet-Klein-13, Goldshtein-Molchanov-Pastur-1977}}.

	In the recent work \cite{M-Wu-Z24}, Mauser, Zhao and  the third author considered the space $\widehat{b}^{s,p}$ with the corresponding norm based on the Fourier coefficients of a function $f$ on $\T$, i.e.,
	\EQ{
		\|f\|_{\widehat{b}^{s,p}}=|\widehat{f}_0|+\||k|^s\widehat{f}_k\|_{l_k^p}.
	}
	The authors obtained that when the potential $\eta \in \widehat{b}^{s,p}$ for $s\geq 0$ and $2<p\leq \infty$, then the cubic NLS equation is locally well-posed in $H^{s+\frac 32+\frac 1p-}(\T)$, but ill-posed in $H^{s+\frac 32+\frac 1p}(\T)$ for some $\eta \in \widehat{b}^{s,p}$.
	The endpoint regularity $H^{s+\frac 32+\frac 1p}(\T)$ can be achieved by slightly updating the potential $\eta$ to $W^{s,p'}$.
	Besides, the authors also considered the potential belongs to the Sobolev space $H^s(\T)$ for $s\geq 0$, and obtained the local well-posedness in $H^{s+2}(\T)$, also the ill-posedness in $H^{s+2+}(\T)$ for some given potential. These results are the first sharp well-posedness results for this model.

	Although there are relatively few mathematical results on the regularization mechanism for the NLS equation with rough potential, this topic has attracted the interest of physicists in the field of quantum mechanics.
	In fact, there are some physical insights, such as those discussed in Section 3.4 in \cite{Kumar-2018}, showing that if the potential $\eta$ is the $\delta$ function, then the solution $u\in C^0/C^1$. Similarly, if the potential $\eta$ has a finite jump, for instance $\eta=\sgn x$, then the solution $u\in C^1/C^2$.

	As mentioned above, the authors in \cite{M-Wu-Z24} established the sharp well-posedness results for the equation \eqref{eq:NLS-2} with rough potential on the torus. In this paper, we aim to study the sharp well-posedness/regularization results for this equation on the whole space. Since the resonance set on the whole space is much larger than that in the torus case, we shall adopt a different approach to address the problem posed by the whole space setting.

	%The result shall be useful some futher research on the full space, such as solitary waves and their stabilities.
	To avoid non-essential analysis, we will focus on the linear equation, as the results can be readily extended to the nonlinear case. A detailed discussion of the nonlinear equation is postponed to Section \ref{sec:1.11}.
	
%	To avoid some non-essential analysis, we will focus on the linear equation and provide a discussion on the nonlinear equation separately in Section \ref{sec:1.11}.
	%However, most of physical phenomena described by the equation \eqref{eq:NLS} are set on the full space, for instance the solitary waves and their stability. Moreover, for a %deterministic rough potential or one precise sample of a spatial noise, the sharp regularity of the solution of the equation \eqref{eq:NLS} on the full space is not clearly known
	%from the existing results. These two questions are the main motivation for this work which is concerned with well-posedness of the equation \eqref{eq:NLS} on the full space.

	\subsection{Main results}
	Next, we consider the equation \eqref{eq:NLS} on the whole space $\R$. Before showing our main results, we give the definitions of well-posedness and ill-posedness.
	\begin{definition}[Well-posedness]\label{Def1}
		The well-posedness of a time dependent PDE can be defined as follows: Denote by $C_t(I, X_0)$ the space of continuous functions from the time interval $I$ to the topological space $X_0$. We say that the Cauchy problem is locally well-posed in $C_t(I, X_0)$ if the following properties hold:
		
		(1) For every $u_0\in X_0$, there exists a strong solution defined on a maximal time interval $I=[0, T_{max})$, with $T_{max}\in (0, +\infty]$.

		(2) There exists some auxiliary space $X$, such that strong solution to this problem is unique in $C_t(I, X_0)\cap X$ .

		(3) The solution map $u_0\mapsto u[u_0]$ is continuous from $X_0$ to $X_0$.
	\end{definition}
	%\begin{remark}
	%The well-posedness defined above with unconditional uniqueness is regarded as the unconditional well-posedness, which is stronger than the above well-posedness in common sense.
	%\end{remark}
	
	 When one of the conditions in the above definition violated, we say the Cauchy problem \eqref{eq:NLS} is ill-posed in space $X_0$. In this work, we refer to the violation of the third condition (around zero solution). Then the specific definition of ill-posedness is the following.
	
	\begin{definition}[Ill-posedness]\label{Def2}
		Let $R>0$ and denote
		\EQ{
			B(R):=\{u_0\in \mathcal{S}:\norm{u_0}_{X_0}\leq R\},
		}
		where $\mathcal{S}$ is the Schwartz space. If there exist $R>0$ and some $u_0\in B(R)$ such that for any $T>0$, the solution map $u_0\mapsto u[u_0]$ is discontinuous from $X_0$ to $C([0,T]; X_0)$. Then we say the Cauchy problem is ill-posed in $X_0$.
	\end{definition}

	Now we state our first well-posedness results for the equation \eqref{eq:NLS}.
In what follows, we define the statement that ``when $\eta \in Y_0$ (some spatial function space), then the equation \eqref{eq:NLS} is sharp well-posed in $H_x^{s}$'' to mean that the problem is well-posed in $H_x^{s}$ for any $\eta \in Y_0$, but ill-posed in $H_x^{s+}$ for some given $\eta \in Y_0$.
	\begin{thm}\label{theorem 1}
		The following statements hold:

(1) When $\eta \in L_x^1+L_x^\infty(\R)$, then the equation \eqref{eq:NLS} is sharp globally well-posed in $H_x^{\frac 32-}(\R)$;

 (2) When $\eta \in L_x^r+L_x^\infty(\R)$ with $1<r\leq 2$, then the equation \eqref{eq:NLS} is sharp globally well-posed in $H_x^{\frac52-\frac1r}(\R)$.
	\end{thm}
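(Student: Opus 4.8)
The plan is to prove each of the two items as a pair of assertions: a positive one — for every admissible $\eta$, Schwartz data evolves into the asserted Sobolev space and the solution map is continuous there — and a negative one — a single admissible $\eta$ for which the solution leaves the larger space ($H^{3/2}(\R)$ when $r=1$; every $H^s(\R)$ with $s>5/2-1/r$ when $1<r\le2$). The whole argument would be organized around Duhamel's formula
\[
u(t)=e^{it\pd_x^2}u_0+i\int_0^t e^{i(t-s)\pd_x^2}\bigl(\eta\,u(s)\bigr)\,\ds=:u_{\mathrm{lin}}(t)+u_{\mathrm{Duh}}(t).
\]
Since $e^{it\pd_x^2}$ is unitary on each $H^\sigma(\R)$, the term $u_{\mathrm{lin}}$ carries exactly the regularity of $u_0$ (in particular it stays Schwartz in $x$), so the whole regularization phenomenon lives in $u_{\mathrm{Duh}}$, and it is there that all the work would go.

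\emph{The positive direction.} First I would record a crude global $L^2$ theory. Writing $\eta=\eta_r+\eta_\infty$ with $\eta_r\in L^r$ and $\eta_\infty\in L^\infty$, the one-dimensional inequality $\|v\|_{L^\infty}^2\lesssim\|v\|_{L^2}\|v_x\|_{L^2}$ and the embedding $H^\sigma(\R)\hookrightarrow L^p(\R)$ show that the quadratic form $\int|v_x|^2-\int\eta|v|^2$ is relatively form-bounded with respect to $\int|v_x|^2$ with relative bound zero; hence, by the (sectorial) KLMN theorem, $i\pd_x^2+i\eta$ generates a $C_0$-semigroup on $L^2$ with form domain $H^1$, which gives a unique global solution with $\|u(t)\|_{L^2}\le C_T\|u_0\|_{L^2}$ and $\|u(t)\|_{H^1}\le C_T\|u_0\|_{H^1}$ for $t\in[0,T]$. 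The heart is a finite bootstrap raising the Sobolev index of $u$, built on two ingredients. The first is the sharp one-dimensional local smoothing for the free group, $\|\,|D|^{1/2}e^{it\pd_x^2}f\|_{L^\infty_xL^2_t}\lesssim\|f\|_{L^2}$, with its dual and retarded forms (for instance $\|\int_0^t e^{i(t-s)\pd_x^2}F\ds\|_{L^\infty_tL^2_x}\lesssim\|\,|D|^{-1/2}F\|_{L^1_xL^2_t}$), together with — obtained from it by commuting a Morawetz-type multiplier such as $\tanh x$ through \eqref{eq:NLS} and absorbing the potential contribution via Hölder — the corresponding half-derivative smoothing for solutions of \eqref{eq:NLS} themselves (this is where the commutator enters). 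The second is that the $L^1_x$ integrability occurring in these norms is precisely dual to the size of $\eta$, via Hölder $\|\eta_r g\|_{L^1_x}\le\|\eta_r\|_{L^r_x}\|g\|_{L^{r'}_x}$ and Hausdorff--Young $\|\ha\eta\|_{L^{r'}}\lesssim\|\eta\|_{L^r}$: the roughness of $\eta$ is \emph{integrated against}, never differentiated. Feeding $F=\eta u$ into Duhamel and using these to estimate a fractional spatial derivative of $\eta u$ in $L^1_xL^2_t$, one gains a definite amount of regularity per pass, and iterating reaches every exponent $<3/2$ when $r=1$. To push the gain up to $5/2-1/r$ when $1<r\le2$ I would additionally integrate by parts in $s$ in the oscillatory integral defining $u_{\mathrm{Duh}}$ — the normal form — which trades the two time derivatives the equation supplies for two spatial derivatives (the source of the ``$2$'' in $5/2$); in frequency this is division by the phase $\x^2-\z^2=(\x-\z)(\x+\z)$, the resonance $\{\x=\z\}$ meeting only the constant Fourier mode $\ha\eta(0)$ and the reflective resonance $\{\x=-\z\}$ being killed by the decay of $\ha\eta(2\x)$. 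Continuous dependence is then automatic from linearity: these bounds make the solution operator bounded from $H^s_x$ to $C([0,T];H^s_x)$ for every $s$ below the threshold, and density of Schwartz functions completes Definition~\ref{Def1}.

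\emph{The negative direction.} For $r=1$ I would take the potential equal to $|x|^{-1}\bigl(\log\tfrac1{|x|}\bigr)^{-\al}$ on $\{|x|<\tfrac12\}$ and $0$ elsewhere, with $\al\in\bigl(1,\tfrac32\bigr]$; it belongs to $L^1(\R)$ exactly because $\al>1$, and splitting the Fourier integral at $|x|=|\x|^{-1}$ gives $\ha\eta(\x)\sim c\,(\log|\x|)^{1-\al}$ as $|\x|\to\infty$. Taking a Schwartz datum with $u_0(0)\ne0$ and analyzing $u_{\mathrm{Duh}}$ directly (isolating its leading singular part through the normal-form expansion) yields $|\wh{u_{\mathrm{Duh}}}(t,\x)|\gtrsim|\ha\eta(\x)|\,\x^{-2}$ on a positive-density subset of $\{|\x|\ge1\}$ for every $t\ne0$, whence, integrating over that subset,
\[
\|u(t)\|_{H^{3/2}}^2\gtrsim\int_{1}^{\infty}\x^{-1}|\ha\eta(\x)|^2\,\dd\x\gtrsim\int_{1}^{\infty}\x^{-1}(\log\x)^{2-2\al}\,\dd\x=+\infty.
\]
Thus $u(t)\notin H^{3/2}(\R)$ (while still $u(t)\in H^{3/2-}(\R)$, matching the positive half), so by linearity the solution map cannot even take values in $C([0,T];H^{3/2})$ — the ill-posedness of Definition~\ref{Def2}. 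For $1<r\le2$ the same scheme with the potential equal to $|x|^{-1/r}(\log\tfrac1{|x|})^{-\al}$ on $\{|x|<\tfrac12\}$, $\al>\tfrac1r$ (so $\eta\in L^r$), for which $\ha\eta(\x)\sim|\x|^{1/r-1}(\log|\x|)^{-\al}$, gives $|\wh{u_{\mathrm{Duh}}}(t,\x)|\gtrsim|\x|^{1/r-3}(\log|\x|)^{-\al}$ on such a subset, hence $u(t)\notin H^s(\R)$ for every $s>5/2-1/r$.

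\emph{Where the main difficulty lies.} The delicate point is the bootstrap at the sharp index. Because $\eta$ has no positive smoothness, the commutator $[\langle D\rangle^\sigma,\eta]$ is not regularizing and one cannot distribute derivatives onto $\eta$; what saves the argument is that the only norm of $\eta u$ the local-smoothing machinery actually demands has the shape $\|\langle D\rangle^{\sigma-1/2}(\eta u)\|_{L^1_xL^2_t}$, in which $\eta$ enters purely through spatial integrability (Hölder against $L^r_x$, Hausdorff--Young) after the $L^2_t$ average has already been improved by the solution's own local smoothing. Turning this heuristic into clean endpoint estimates forces a careful frequency decomposition of the bilinear term $\eta u$ — a low-input-frequency regime producing the factor $\x^{-2}$ and a comparable-frequency regime producing the $1/r$ — interlocked with the normal-form step and with a separate treatment of the reflective resonance $\{\x=-\z\}$; calibrating these is where essentially all of the difficulty of the positive half is concentrated.
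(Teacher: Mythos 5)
Your positive-direction outline matches the paper's strategy in spirit (Duhamel, local smoothing in $L_x^\infty L_t^2$, H\"older/Hausdorff--Young so that $\eta$ is only ever integrated, integration by parts in time on the non-resonant part), but the one step that carries the whole theorem --- how to place $\sigma-\tfrac12$ derivatives on the product $\eta u$ in $L_x^1L_t^2$ with none falling on $\eta$ --- is never actually performed, and the mechanism you do name for the resonance is unavailable. You claim the reflective resonance $\{\xi=-\xi_2\}$ (where $\eta$ carries frequency $\approx 2\xi$) is ``killed by the decay of $\widehat{\eta}(2\xi)$''; but for $\eta\in L_x^1$ the Fourier transform has no decay (for the paper's $\delta$-approximating Gaussian, $\widehat\eta\approx 1$ on the relevant range), and for $\eta\in L_x^r$, $1<r\le 2$, one only has $\widehat\eta\in L_\xi^{r'}$, not pointwise decay. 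The paper resolves exactly this point differently: for $r=1$ the derivative moved onto $u$ is $D^\beta$ with an even symbol, so the commutator symbol $|\xi|^\beta-|\xi_2|^\beta$ vanishes on the whole resonance set $|\xi|=|\xi_2|$ (both branches), and integration by parts in time yields the factor $\sim\min\{|\xi|^{\beta-2},|\xi_2|^{\beta-2}\}$ of Lemma \ref{lem:I-M}; for $1<r\le2$ the normal form is applied only where $|\xi_2|\ll|\xi|$, and the comparable-frequency region (which contains both resonance branches) is estimated directly via Schur's test, the smoothing estimate \eqref{Smooth2}, H\"older against $\|P_{\ge N_0}\eta\|_{L_x^r}$ and Sobolev embedding. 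Your ``gain a definite amount per pass and iterate'' claim is likewise unsubstantiated: \eqref{Smooth2}--\eqref{Smooth3} only gain over derivatives you can already place on $u$, and without the commutator/normal-form bookkeeping there is no quantified gain to iterate; the paper instead closes one contraction in the combined norm $\|\langle D\rangle^su\|_{L_t^\infty L_x^2}+\|D^\beta u\|_{L_x^\infty L_t^2}$. (Minor: the KLMN/semigroup step is not automatic for the complex-valued $\eta$ allowed here, though globality anyway follows from the linear local theory with lifespan depending only on $\|\eta\|$.) In short, you have correctly identified where the difficulty sits, but the proposal defers precisely that calibration, so the positive half is not proved.

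Your ill-posedness route is genuinely different from the paper's and the first-iterate computation is plausible: with $\eta=|x|^{-1}(\log\tfrac1{|x|})^{-\alpha}$, $1<\alpha\le\tfrac32$ (resp. $|x|^{-1/r}(\log\tfrac1{|x|})^{-\alpha}$), the first Picard iterate has $\widehat{A_1[u_0]}(t,\xi)\approx c_0\,\widehat\eta(\xi)\,\frac{e^{it\xi^2}-1}{i\xi^2}$ and the stated divergence of $\int_1^\infty\xi^{-1}(\log\xi)^{2-2\alpha}d\xi$ is correct. The gap is that you assert the lower bound for the full Duhamel term, i.e.\ that the fixed solution leaves $H^{3/2}$ (resp. $H^s$, $s>\frac52-\frac1r$); this requires showing the remainder (all higher iterates) lies in $H^{3/2}$, an estimate at exactly the endpoint regularity that the positive theory does not supply for $r=1$, and which needs a separate analysis of the second phase division and its own resonances --- ``isolating the leading singular part through the normal-form expansion'' is the difficulty restated, not an argument. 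The paper sidesteps this entirely: by Lemma \ref{ill-posed}, discontinuity of the solution map in the sense of Definition \ref{Def2} already follows from unboundedness of the first iterate alone, which it exhibits with a Gaussian approximate delta ($r=1$, evaluated at $t\sim N_0^{-1}$ on a union of frequency shells where $\sin(t\xi^2)\ge\tfrac12$) and a frequency-localized bump ($1<r\le2$, at $t=M^{-2}$). Either supply the endpoint bound for the remainder, or replace the ``fixed solution leaves $H^{3/2}$'' claim by the first-iterate argument combined with Lemma \ref{ill-posed}; as written, the negative half is also incomplete.
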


	From the above theorem, we see that as the integrability of the rough potential $\eta$ reaches $L_x^2+L_x^\infty(\R)$, the regularity of the solution of the equation \eqref{eq:NLS} correspondingly increases to $H_x^{2}(\R)$. However, the equation is ill-posed in $H_x^{2+}(\R)$. A natural question arises: as the integrability of the rough potential $\eta$ continues to improve, will the regularity of the solution also continue to increase accordingly? Our results below give a negative answer to this question.
 %is that whether as the integrability of the rough potential $\eta$ continues to increase, the regularity of the solution of the equation will continue to increase accordingly.

	\begin{thm}\label{theorem 3}
		When $\eta \in L_x^r+L_x^\infty(\R)$ with $r>2$, then the equation \eqref{eq:NLS} is sharp globally well-posed in $H_x^{2}(\R)$.
	\end{thm}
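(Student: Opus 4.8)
The statement has two halves: global well-posedness in $H_x^2(\R)$ for every $\eta\in L_x^r+L_x^\infty(\R)$ with $r>2$, and ill-posedness in $H_x^{2+\e}(\R)$ for at least one such $\eta$ and every $\e>0$. For the positive part, write $\eta=\eta_0+\eta_1$ with $\eta_0\in L_x^\infty$, $\eta_1\in L_x^r$. The algebraic input is that for $r\ge2$ the multiplication $u\mapsto\eta u$ sends $H_x^2$ to $L_x^2$: since $H_x^2\hookrightarrow L_x^\infty\cap L_x^{2r/(r-2)}$, Hölder yields $\|\eta u\|_{L_x^2}\ls(\|\eta_0\|_{L_x^\infty}+\|\eta_1\|_{L_x^r})\|u\|_{H_x^2}$, and it is exactly here that the threshold $r\ge2$ appears. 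I would then combine the Duhamel formula with one normal-form integration by parts in the time variable, carried out only on the high-frequency part $P_{\mathrm{hi}}$ (where $\pd_x^{-2}$ is bounded), using $\pd_s e^{i(t-s)\pd_x^2}=-i\pd_x^2 e^{i(t-s)\pd_x^2}$ and $\pd_s(\eta u)=i\eta\pd_x^2u+i\eta^2u$; the low frequencies are handled crudely since $P_{\mathrm{lo}}$ maps $L_x^{\rho'}+L_x^2$ into $L_x^2$. This produces
\[
u(t)=e^{it\pd_x^2}u_0-\tfrac1{\pd_x^2}P_{\mathrm{hi}}(\eta u)(t)+\tfrac1{\pd_x^2}P_{\mathrm{hi}}e^{it\pd_x^2}(\eta u_0)+\tfrac{i}{\pd_x^2}\int_0^t e^{i(t-s)\pd_x^2}P_{\mathrm{hi}}(\eta\pd_x^2u+\eta^2u)\,\mathrm{d}s+(\mathrm{smooth}),
\]
so that, since each $\pd_x^{-2}P_{\mathrm{hi}}$ is worth two derivatives, controlling $\|u(t)\|_{H_x^2}$ reduces to $L_x^2$ bounds on $(\eta u)(t)$, on $\eta u_0$, and on the doubly-iterated integral.

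The first two are the algebra estimate applied to $u(t)$ and to the datum $u_0\in\mathcal S$. For the doubly-iterated integral the source $\eta_1\pd_x^2u$ lies only in $L_x^{\rho'}$ with $\rho'=2r/(r+2)<2$, so no Minkowski/unitarity bound is available; instead I would invoke the inhomogeneous Strichartz estimate along the pair $(q,\rho)=(2r,2r/(r-2))$, which is admissible precisely because $r\ge2$, together with $\|\eta_1\pd_x^2u\|_{L_x^{\rho'}}\le\|\eta_1\|_{L_x^r}\|\pd_x^2u\|_{L_x^2}$, to bound it by $\ls T^{1/q'}\|\eta_1\|_{L_x^r}\|u\|_{L_T^\infty H_x^2}$ (the $\eta_0$ part is easier and $\eta^2u$ is lower order). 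The remaining term $\pd_x^{-2}P_{\mathrm{hi}}(\eta u)(t)$ carries no smallness; here I would use the conserved mass $\|u(t)\|_{L_x^2}=\|u_0\|_{L_x^2}$ (for real $\eta$; a Gronwall variant in general) together with the Gagliardo--Nirenberg bound $\|u(t)\|_{L_x^{2r/(r-2)}}\ls\|u_0\|_{L_x^2}^{1-1/2r}\|\pd_x^2u(t)\|_{L_x^2}^{1/2r}$ and Young's inequality to absorb a $\de\|u(t)\|_{H_x^2}$ into the left side. This gives $\|u\|_{L_T^\infty H_x^2}\ls_{u_0}1+(T+T^{1/q'})C(\eta)\|u\|_{L_T^\infty H_x^2}$, which closes for small $T$ and iterates to a global bound; existence then follows by approximating $\eta$ by smooth potentials and passing to the limit (or by a contraction in a lower-regularity space followed by bootstrapping), while uniqueness and continuous dependence follow from the same estimates applied to the difference of two solutions, which solves the homogeneous equation, so the normal form is legitimate.

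For the ill-posedness, a potential with only a jump (e.g. $\eta=\chi(x)\,\mathrm{sgn}\,x\in L_x^\infty\subset L_x^r+L_x^\infty$) does not suffice, because for it the flow is bounded, hence continuous, on $H_x^{2+\e}$ for $0<\e<\tfrac12$. I would instead take the lacunary potential $\eta(x)=\chi(x)\sum_{j\ge1}j^{-1}\cos(2^jx)$ with $\chi\in C_c^\infty$: its coefficients being square summable, $\eta\in L_x^p(\R)$ for every $p<\infty$ so $\eta\in L_x^r+L_x^\infty$, whereas $\|\eta\|_{H^\e}^2\gs\sum_jj^{-2}2^{2j\e}=\infty$, so $\eta\notin H_{\mathrm{loc}}^\e$ for any $\e>0$. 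Fix $u_0\in\mathcal S$ with $u_0(0)\ne0$; by the first part the solution lies in $C_tH_x^2\subset C_tC^1$, so $u(t,0)$ is continuous and nonzero for $|t|$ small. Applying $\pd_x^2$ to the normal-form identity,
\[
\pd_x^2u(t)=-(\eta u)(t)+P_{\mathrm{hi}}e^{it\pd_x^2}(\eta u_0)+i\int_0^t e^{i(t-s)\pd_x^2}P_{\mathrm{hi}}(\eta\pd_x^2u+\eta^2u)\,\mathrm{d}s+(\mathrm{smooth}).
\]
Since $\eta u_0$ has compact support, $e^{it\pd_x^2}(\eta u_0)$ is real-analytic for $t\ne0$, hence locally $C^\infty$; granting that the doubly-iterated integral lies in $H_{\mathrm{loc}}^\de$ for some $\de>0$, one gets $\pd_x^2u(t)+(\eta u)(t)\in H_{\mathrm{loc}}^{\min(\de,1)-}$. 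But $\eta\notin H_{\mathrm{loc}}^{\e'}$ near $0$ while $u(t,\cdot)\in C^1$ does not vanish at $0$, so by multiplicativity of $H_{\mathrm{loc}}^{\e'}$ over $C^1$ ($\e'<1$) one has $(\eta u)(t)\notin H_{\mathrm{loc}}^{\e'}$; hence $\pd_x^2u(t)\notin H_{\mathrm{loc}}^{\e''}$ for $0<\e''<\min(\de,1)$, i.e. $u(t)\notin H_x^{2+\e''}$, and therefore $u(t)\notin H_x^{2+\e}$ for every $\e>0$ by nesting of Sobolev spaces. Since $u_0\in\mathcal S\subset H_x^{2+\e}$ but $u(t)\notin H_x^{2+\e}$ for all small $t\ne0$, the solution map cannot take values continuously in $C([0,T];H_x^{2+\e})$ for any $T>0$, which is the asserted ill-posedness.

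The main obstacle, in both halves, is the doubly-iterated Duhamel term $\int_0^t e^{i(t-s)\pd_x^2}(\eta\pd_x^2u+\eta^2u)\,\mathrm{d}s$: its source has only $L_x^{<2}$ integrability and no $x$-integration by parts is available (since $\eta'$ is merely a distribution), so the positive direction hinges on the Strichartz pair that is exactly critical as $r\downarrow2$, and the negative direction needs a quantitative local-smoothing gain of $\de>0$ derivatives for this term, so that it cannot conspire to cancel the $-(\eta u)(t)$ singularity frequency by frequency. The commutator $[\pd_x^2,\eta]=\eta''+2\eta'\pd_x$ enters only to bookkeep which pieces are genuinely dangerous.
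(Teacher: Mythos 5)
Your route differs from the paper in both halves: for the positive direction the paper does not integrate by parts against the propagator phase $|\xi|^2$ alone, but performs a normal form only on the non-resonant region $|\xi_2|\ll|\xi|$, dividing by $|\xi|^2-|\xi_2|^2$ (Lemma \ref{resoant-decom-2}); this forces the high frequency onto $\eta$, so the boundary term is bounded by $\|P_{\geq N_0}\eta\|_{L_x^r}\|u\|_{L_t^\infty H_x^2}$ and becomes small by choosing $N_0$ large, and the contraction closes with no conservation law. Your version, which divides only by the output frequency, is a legitimate identity and your treatment of $\eta_1\partial_x^2u$ via the dual pair of $(2r,\tfrac{2r}{r-2})$ is essentially the paper's estimate for the resonant piece; but your boundary term $\partial_x^{-2}P_{\mathrm{hi}}(\eta u)(t)$ carries no smallness in $N_0$, and you close it with mass conservation plus Gagliardo--Nirenberg. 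This is the first genuine gap: the theorem (and the paper's proof) allows complex-valued $\eta$, for which $\|u(t)\|_{L_x^2}$ is not conserved, and the "Gronwall variant" you invoke is not routine, since $\tfrac{d}{dt}\|u\|_{L_x^2}^2=-2\int\operatorname{Im}(\eta)|u|^2dx$ involves $\operatorname{Im}\eta_1\in L_x^r$ only, so the $L^2$ bound itself couples back to $\|u\|_{H_x^2}$ and must be run inside a bootstrap that you do not set up. The cleanest repair is either to carry out that coupled bootstrap, or to split the boundary term so that the high frequency sits on $\eta$ (or on $P_{\gtrsim N_0}u$, where Bernstein gives $N_0^{-3/2}$ smallness) --- which is exactly what the paper's cutoff $\phi_{\ll1}(|\xi_2|/|\xi|)$ accomplishes automatically.

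For the ill-posedness your approach is genuinely different and in principle stronger: a single fixed lacunary potential and the claim that the true solution instantly leaves $H_x^{2+\e}$, versus the paper's frequency-localized family $\eta_{M}$, norm inflation of the first Picard iterate, and Lemma \ref{ill-posed}. But your argument hinges on the step you explicitly ``grant'': that the twice-iterated Duhamel term $\int_0^te^{i(t-s)\partial_x^2}P_{\mathrm{hi}}(\eta\partial_x^2u+\eta^2u)\,ds$ gains $\delta>0$ derivatives locally. Without this the identity $\partial_x^2u(t)=-(\eta u)(t)+(\text{better})$ proves nothing, so as written the negative half is incomplete. The gap is fillable with the paper's own smoothing estimate \eqref{Smooth2}: since your $\eta$ is compactly supported and in $L_x^4$, hence in $L_x^1\cap L_x^2$, one has $\|\eta\partial_x^2u\|_{L_x^1L_t^2}\lesssim T^{1/2}\|\eta\|_{L_x^2}\|u\|_{L_t^\infty H_x^2}$ and therefore the iterated term lies in $L_t^\infty H_x^{1/2}$, i.e. $\delta=\tfrac12$; you should state and use this. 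Two smaller points also need a line each: the lacunary series must fail to be in $H^{\e}$ on \emph{every} neighborhood of $0$ (and $\chi(0)\neq0$), which is true but should be argued via $\widehat{\psi\eta}=\widehat\psi*\widehat\eta$ and lacunarity; and non-membership $u(t)\notin C([0,T];H_x^{2+\e})$ should be reconciled with Definition \ref{Def2}, which is phrased as discontinuity of the solution map (the paper instead exhibits inflation of the first iterate and applies Lemma \ref{ill-posed}); your conclusion is the failure of persistence, which violates well-posedness but is not literally the definition used.
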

%This result shows that once integrability of $\eta$ exceeds $L_x^2(\R)$, $H_x^2$ is the highest achievable regularity of the solution for smooth data.

	%{\color{red}For less regular spatial random potentials, the theory of the spectrum of the linear operator $-\partial_x^2+\eta(x)$ can be referred to \cite{Albeverio-Gesztesy-Hegh-Krohn-Holden-1988, Dumaz-Labbe-20, Germinet-Klein-13, Goldshtein-Molchanov-Pastur-1977}}.

	\begin{remark}\label{4201}
We make the following remark regarding the above results.
		\begin{enumerate}
			\item
			Cazenave \cite{Cazenave--03} claimed that if the potential $\eta \in L_x^{1}+L_x^{\infty}$, then the equation \eqref{eq:NLS} is locally well-posed in $H_x^2$, see Corollary 4.8.6 in \cite{Cazenave--03}. However, in the case where $\eta \in L_x^r+L_x^{\infty}$ with $1\leq r<2$, we provide some counterexamples as in the proof of Theorem \ref{theorem 1}, which implies that this assertion is not valid.
			%It is regrettable that we found that the following conclusion was incorrectly used in the proof of above claim,
			%\EQ{
				%\eta\in L^r, r<2, \mbox{ and }u_0\in \mathcal{S}, \mbox{ then }\eta u_0\in L^2.
				%}

			\item
We regard the $\delta$-function as an $L_x^1$-function in the sense that $\delta=\lim_{\varepsilon\to 0}\varepsilon^{-1}\varphi(\frac{x}{\varepsilon})$, for $\varphi \in L_x^1$. Previously, the well-posedness of the equation \eqref{eq:NLS} with potential $\eta=\delta$ was established only in $H^1$, as shown in \cite{Fu-Oh-Oz-08, Goodman-Holmes-Weinstein-2004}. Our result improves the regularity from $H^1$ to $H^{\frac 32-}$ and achieves its optimality.

Moreover, the difference between the cases $r=1$ and $r>1$ is that the endpoint index $\frac52-\frac1r$ can be attained in the latter case.
			Our results align with those in the periodic case, as shown in Theorems 1.1 and 1.3 in \cite{M-Wu-Z24}.
			However, there are significant differences in the arguments used in the proofs for these two cases, see Section \ref{sec:1.2}.
			%The above ill-posedness result implies the local well-posedness result is sharp. In fact, in the proof of ill-posedness, the potential we choose is that $\eta(x)=\delta(x)$. Hence, for the NLS equation \eqref{eq:NLS} with $\delta$-potential, the regularity $H^{\frac 32-}$ of the solution is sharp. Moreover, the initial data in the proof we choose is that
			%\EQ{
				%u_0(x):=P_{\leq 1}e^{-|x|^2},
				%}
			%which is a Schwartz function.
			
			%Moreover, our results match the results in the periodic case, see Theorems 1.1 and 1.3 in \cite{M-Wu-Z24}.
			
			%
			%The above theorem shows a regularity mechanism. Although the regularity of this initial data can be improved, the regularity of the corresponding solution can not be improved, which presents that requiring more regularity of the initial data will neither improve the well-posedness result.
			
			\item

			The above two theorems imply that, for a fixed rough potential, the regularity of the solution can only reach a certain level. Once the highest achievable regularity is attained, increasing the smoothness of the initial data will not lead to a corresponding increase in the regularity of the solution. Moreover, once the integrability of $\eta$ exceeds $L_x^2+L_x^{\infty}(\R)$, $H_x^2$ is the highest achievable regularity of the solution for smooth data.
%even if the initial data becomes smoother, the regularity of the solution will not increase accordingly.

				\item
			From our results above and the Sobolev embedding $H^{\frac12+}(\R)\hookrightarrow L^\infty(\R)$, we  observe that if the potential $\eta(x)=\delta(x)$, then the solution $u\in C^{0, \alpha}/C^1, 0\leq \alpha<1$. Similarly, if the potential $\eta$ has a finite jump, then the solution $u\in C^{1, \alpha}/C^{1, \frac 12}, 0\leq \alpha<\frac12$. These observations are consistent with the physical insights in \cite{Kumar-2018} but more refined.

			\item
			
			As a further extension, if $|\nabla|^s\eta\in L_x^1+L_x^{\infty}(\R)$, then the regularity of the solution to \eqref{eq:NLS} reaches $H_x^{s+\frac 32-}(\R)$; if $|\nabla|^s\eta\in L_x^r+L_x^{\infty}(\R), 1<r\le 2$, then the regularity of the solution to \eqref{eq:NLS} can reach $H_x^{s+\frac52-\frac1r}(\R)$; if $|\nabla|^s\eta\in L_x^r+L_x^{\infty}(\R), r> 2$, then the regularity of the solution to \eqref{eq:NLS} can reach $H_x^{s+2}(\R)$.

		\end{enumerate}
	\end{remark}
	
%	For the more general potential, we have the following result.
%	\begin{thm}\label{theorem 1-1}
%		Let $\eta \in L_x^r(\R)$ for $1<r\leq 2$. Then the equation \eqref{eq:NLS} is locally well-posed in $H_x^{\frac52-\frac1r}(\R)$, and ill-posed in $H_x^{\gamma}(\R)$ for any $\gamma>\frac 52-\frac 1r$.
%	\end{thm}
	%\begin{remark}
		
%\begin{enumerate}
%		
%	       \item	
%		In the above two theorems, the difference between the cases $r=1$ and $r>1$ is that the endpoint index $\frac52-\frac1r$ can be attained in the latter case.
%		
%		\item
%		As presented in the theorem, the regularity of the solution can be improved by updating the regularity of the potential.
%		Interestingly, we find that an increase in the integrability of the potential function $\eta$ can lead to an increase in the differentiability of the solution.
%		
%		\item
%Furthermore, if $|\nabla|^s\eta\in L_x^r(\R), 1<r\le 2$, then the regularity of the solution to \eqref{eq:NLS} can attain to $H_x^{s+\frac52-\frac1r}(\R)$.
%		
%	        \end{enumerate}

%	\end{remark}

	\subsection{A discussion on the effects of nonlinearity} \label{sec:1.11}
Now, we briefly discuss the effect of the existence of nonlinearity on the regularity of the solution to the equation \eqref{eq:NLS}. To be precise, we will present the well-posedness results for the nonlinear Schr\"odinger equation \eqref{eq:NLS-2}.
%	\begin{equation}\label{eq:NLS-22}
%		\left\{ \aligned
%		&i\partial_t u(t, x)+\Delta u(t, x)+\eta(x) u(t,x)=\lambda |u(t,x)|^{p} u(t,x),
%		\\
%		&u(0,x)=u_0(x),
%		\endaligned
%		\right.
%	\end{equation}
%where $\lambda \in \R$. The case $\lambda>0$ is referred to the defocusing case, and the case $\lambda<0$ is referred to the focusing case. The defocusing or focusing form does not affect regularity of the solution.

As we can observe, the term $\eta u$ and the nonlinear term $|u|^{p} u$ interact with each other, influencing the regularity of the solution. On one hand, the rough potential bounds the regularity of the solution from above. On the other hand, the nonlinear terms bound it from below. Their interaction confines the regularity of the solution to a specific domain.
%On one hand, the existence of rough potential introduces the problem of derivative loss, which further affects the regularity of the solution. On the other hand, only the influence of the nonlinear term is considered, the existence of the nonlinear term determines the level of the equation and thus affects the regularity of the solution.
More precisely, for the one-dimensional classical NLS equation,
	\begin{equation}\label{eq:NLS-222}
		\left\{ \aligned
		&i\partial_t u(t, x)+\pd_x^2 u(t, x)=\lambda |u(t,x)|^{p} u(t,x), \quad (t, x)\in \R\times \R,
		\\
		&u(0,x)=u_0(x),
		\endaligned
		\right.
	\end{equation}
the level of this equation is
$
s_c=\frac 12-\frac 2p,
$
in the sense of scaling.
Form the work of Cazenave and Weissler \cite{Cazenave-Weissler-1990}, the equation \eqref{eq:NLS-222} is locally well-posed in $H_x^s(\R)$, for $s\geq s_c$. Therefore, if we consider the one-dimensional nonlinear equation \eqref{eq:NLS-2} in the resolution space where regularity is at least $L_x^{2}$, the nonlinearity has a weaker influence on the regularity of the solution compared to the rough potential.

Next, we summarize the well-posedness results in $H_x^s$ for the equation \eqref{eq:NLS-2} with potential $\eta\in L_x^r+L_x^{\infty}$. We recall that the equation \eqref{eq:NLS-2} is the following
\begin{equation}\label{eq:NLS-2-2}
		\left\{ \aligned
		&i\partial_t u(t, x)+\pd_x^2 u(t, x)+\eta(x) u(t,x)=\lambda |u(t,x)|^{p} u(t,x),
		\\
		&u(0,x)=u_0(x),
		\endaligned
		\right.
	\end{equation}
where the sign of $\lambda$ does not affect the local well-posedness results. For this equation, a combination of the known well-posedness results for the original NLS equation \eqref{eq:NLS-222} and Theorems \ref{theorem 1}, \ref{theorem 3} can derive its well-posedness results. We have the following claim without proof.

{\bf Claim:} Denote the regularity threshold $s_r$ as follows,
$$s_r=\frac 32-, \mbox{ if } r=1; \quad =\frac 52-\frac 1r,  \mbox{ if } 1<r<2;\quad =2, \mbox{ if } r\geq 2. $$
%$$
%	s_r =\left\{ \aligned
%	\frac 32-, \qquad & r=1,
%	\\
%	\frac 52-\frac 1r,\qquad & 1<r<2,
%\\
%	2, \qquad & r\geq 2.
%	\endaligned
%	\right.
%	$$
Suppose that
$$
\mbox{max}\{s_c, 0\}\leq s\leq s_r,\quad s<p+1,
$$
then the nonlinear equation \eqref{eq:NLS-2-2} is locally well-posed in $H_x^s(\R)$.

The proof of this claim follows from the fractional chain rule (see e.g. \cite{Visan-2007}), the standard method used in the well-posedness theory for the original NLS equation \eqref{eq:NLS-222}, and the argument presented in this paper. Moreover, if the potential $\eta$ is real-valued, we further assert that the equation \eqref{eq:NLS-2-2} is globally well-posed in the aforementioned space $H_x^s$, as such a potential generally does not influence the global well-posedness in this setting.
%Moreover, we note that the solution of the equation \eqref{eq:NLS-2-2} does not have the conservation of mass and energy if the potential $\eta$ is a complex-valued function.
%If the potential $\eta$ is a real-valued function, then we also claim that the conditions for global well-posedness of the nonlinear equation \eqref{eq:NLS-2-2} on the initial data are the same as the those for the original NLS equation \eqref{eq:NLS-222}, where the potential generally has no influence for the global well-posedness.
%To be precise, one can further obtain the global well-posedness in $L_x^2(\R)$ when $0<p<4$. When $p\ge 4$, some size restriction on the initial data is needed, which will not be discussed in this paper.

	\subsection{The main difficulty and our method} \label{sec:1.2}
	We briefly state the main difficulty and argument in the work.
	In the proof of the global well-posedness for the equation \eqref{eq:NLS} with rough potential, the main difficulty is that we can not take any derivative of the potential function $\eta$. For instance, in the case where $\eta\in L_x^1+L_x^{\infty}(\R)$, we can obtain almost $\frac 32$-order derivative of the solution, but the usual Strichartz' estimates and Kato-Ponce's inequality for $\eta u$ are no longer applicable. Indeed, using the usual Strichartz' estimates, we encounter the following inequality
	\EQ{
		\norm{\int_0^te^{i(t-\rho)\partial_x^2}\langle\nabla \rangle^{\frac 32-}(\eta u)d\rho}_{L_t^{\infty}L_x^2}\lsm \norm{\langle\nabla \rangle^{\frac 32-}(\eta u)}_{L_t^{\frac 43}L_x^1}.
	}
	This inevitably requires taking derivatives of the potential $\eta$. The same difficulty also occured in the study on the torus, see \cite{M-Wu-Z24}.

	A nice approach is to consider the resonant and non-resonant terms of the above integral term in frequency space, as done in the torus case \cite{M-Wu-Z24}. As described earlier, compared with the periodic case, the resonance set on the whole space is larger. In the periodic case, the frequency is discrete, so low frequencies (except for 0 frequency) can be almost removed. However, in the full space case, since the frequency is continuous,
 the resonance is stronger than in the periodic case.
 %This is the obstacle to treat the resonance term by the same method as the periodic case.

Consequently, it requires us to find new argument to overcome the difficulties. The main techniques used are the commutator method, normal form method, and the local smoothing effect.
	%However, on the full space, the frequency is continuous so that the resonance effect is stronger than the periodic case, which is the obstacle to treat the resonance term and reach our goal.
	%We can take the Fourier transform of the above integral term, and consider the resonance term and non-resonance term.
	%In \cite{M-Wu-Z24}, the frequency is discrete in the periodic setting, which makes the above difficulty can be overcame by considering the resonance part and non-resonance part of the equation \eqref{eq:NLS}. However, on the full space, the frequency is continuous so that the resonance effect is stronger than the periodic case, which is the obstacle to overcome the above difficulty.

	%$\bullet$ \textit{A transform.} We apply the following transform
	%\EQ{
		%v(t):=e^{-it\partial_x^2}u(t),
		%}
	%which transform the integral term involved with $\eta u$ into the following
	%\EQ{
		%e^{it\partial_x^2}\int_0^te^{-i\rho\partial_x^2}(\eta e^{i\rho\partial_x^2}v(\rho))d\rho.
		%}
	%The advantage of the above transformation is that we can analyze the phase function by Fourier transform, high and low frequency decomposition, and integration-by-parts. However, we %can not do that at this time, because we still can not avoid the almost $\frac 32$-order derivative posed on the potential $\eta$.
	
	%$\bullet$ \textit{The commutator method.} Next, we state the strategy of the proof of global well-posedness with potential $\eta \in L_x^1(\R)$.
To be specific, we write
	\EQn{\label{1121}
		D^s\int_0^t  e^{-i\rho\partial_x^2}(\eta e^{i\rho\partial_x^2}v(\rho))d\rho
		=D^{s-\beta}\int_0^t  e^{-i\rho\partial_x^2}\big(\eta\> D^\beta e^{i\rho\partial_x^2}v(\rho)+[D^\beta, \eta] e^{i\rho\partial_x^2}v(\rho)\big)d\rho,
	}
	where $v(t):=e^{-it\partial_x^2}u(t)$ and $s=\frac 32-$, the parameter $\beta:=1-$ is chosen by our needs and $[\cdot, \cdot]$ is the commutator.
	%The operator $D^\beta$ is extracted here needs to match the local smoothing effect, see remark \ref{521} below.
	For the first term on the right-hand side of the above equality, after shifting some derivatives to the solution $v$, we can use the local smoothing effect to close the estimates, where $\beta:=1-$ is chosen to match the most regularity we can gain from the local smoothing effect, see remark \ref{521} below. For the second term, we write it in the frequency space as follows,
	\EQ{
		\int_0^t\int_{\xi=\xi_1+\xi_2}e^{i\rho(|\xi|^2-|\xi_2|^2)}
		(|\xi|^{\beta}-|\xi_2|^{\beta})\widehat{\eta}(\xi_1)\widehat{v}(\rho,\xi_2)d\xi_1d\rho.
	}
	We observe that this integral is temporal non-resonant, as the resonant part, which arises form $|\xi|^2-|\xi_2|^2=0$, vanishes.
	%This is due to the factor $|\xi|^{\beta}-|\xi_2|^{\beta}=0$, which benefits from the using of the commutator.

	Based on the above observation, for the non-resonance part, i.e. $|\xi|^2\neq|\xi_2|^2 $, inspired by differentiation-by-parts used in \cite{Babin-Ilyin-Titi-2011}, we obtain a factor
	\EQ{
		\frac{|\xi|^{\beta}-|\xi_2|^{\beta}}{|\xi|^{2}-|\xi_2|^{2}}\sim \mbox{min}\{|\xi|^{\beta-2}, |\xi_2|^{\beta-2}\},
	}
	which can eliminate the obstruction operator $D^{s-\beta}$ in the front of \eqref{1121}.

	\subsection{Organization of the paper} The rest of the paper is organized as follows. In Section 2, we give some basic notations, lemmas that will be used in this paper. The Sections 3 and 4 are devoted to the proof of the well-posedness results for $r=1$ and $1<r\leq 2$ in Theorem \ref{theorem 1}, respectively. In Section 5, we show the proof of Theorem \ref{theorem 3}.

	\section{Preliminary}\label{sec:notations}
	
	\subsection{Notations}
	For any $a\in \mathbb{R}$, $a\pm:=a\pm\epsilon$ for arbitrary small $\epsilon>0$. For any $z\in \mathbb{C}$, we define $\mbox{Re}z$ and $\mbox{Im}z$ as the real and imaginary part of $z$, respectively. Denote the commutator $[A, B]$ by $[A, B]f=ABf-BAf$. Denote $\langle\cdot\rangle=(1+|\cdot|^2)^{\frac 12}$ and $D^\alpha  =(-\partial_x^2)^{\frac \alpha 2} $. We write $X \lesssim Y$ or $Y \gtrsim X$ to indicate $X \leq CY$ for
	some constant $C>0$. If $X \leq CY$ and $Y \leq CX$, we write $X\sim Y$. If $X\leq 2^{-5}Y$, denote $X\ll Y$ or $Y\gg X$. Throughout the whole paper, the letter $C$ will denote suitable positive constant that may vary from line to line. Moreover, we use ``\emph{R.H.S} of $(\cdot)$'' to represent the part on the right-hand side of the estimate $(\cdot)$.
	
We use the following norm to denote the sum of two Banach spaces $X_1$ and $X_2$,
\EQ{
\norm{u}_{X_1+X_2}=\inf\{\norm{u_1}_{X_1}+\norm{u_2}_{X_2}: u_1\in X_1, u_2\in X_2, u=u_1+u_2\}.
}
	We also use the following norms to denote the mixed spaces $L_t^qL_x^r(I\times \R)$ and $L_x^r L_t^q( \R\times I)$, that is
	\begin{align*}
		\|u\|_{L_t^qL_x^r(I\times \R)}=\Big(\int_I \|u\|_{L_x^r( \R)}^qdt\Big)^{\frac{1}{q}},
	\end{align*}
	and
	\begin{align*}
		\|u\|_{L_x^rL_t^q( \R\times I)}=\Big(\int_{\R} \|u\|_{L_t^q( I)}^rdx\Big)^{\frac{1}{r}}.
	\end{align*}
	For simplicity, we often write $L_x^{r}L_t^q:=L_x^{r}L_t^q(\R\times I)$, $L_t^{q}L_x^r:=L_t^{q}L_x^r(I\times \R)$ and some similar simplified norm notations for short.

We use $\widehat{f}$ or $\mathscr{F}f$ to denote the Fourier transform of $f$:
$$
\mathscr{F}f (\xi)= \widehat f(\xi)
	= \int_{\R} e^{- i x\cdot \xi} f(x) dx.
$$
We also define
$$
\mathscr{F}^{-1}g (x)= \int_{\R} e^{ i x\cdot \xi} g(\xi) d\xi.
$$
	The Hilbert space $H_x^s(\R)$ is a Banach space of elements such that
	$\mathscr \langle\xi\rangle^s \widehat u  \in L_{\xi}^2(\R)$, and equipped with the norm $\|u\|_{H_x^s}= \|\langle\xi\rangle^s  \widehat{ u}  (\xi)  \|_{L_{\xi}^2}$. We also have an embedding inequality
	that $\|u\|_{H_x^{s_1}}\lesssim \|u\|_{H_x^{s_2}}$ for any $s_1 \leq s_2$, $s_1, s_2\in \R$.  We take a cut-off function $\chi_{a\leq |\cdot| \leq b}(x) \in C_c^{\infty}(\R)$ for $ b>a>\frac 14$ such that
	$$
	\chi_{a\leq |\cdot| \leq b}(x) =\left\{ \aligned
	&1, \quad a\leq |x| \leq b,
	\\
	&0, \quad |x|\leq a-\frac 14 \mbox{ or } |x|\geq b+\frac 14.
	\endaligned
	\right.
	$$
	We also need the usual inhomogeneous Littlewood-Paley decomposition for the dyadic number. We take a cut-off function $\phi \in C_c^{\infty}(0, \infty)$ such that
	$$
	\phi(r)=\left\{ \aligned
	&1, \quad r\leq 1,
	\\
	&0, \quad r\geq 2.
	\endaligned
	\right.
	$$
Next, we give the definition of Littlewood-Paley dyadic projection operator.
For dyadic number $N\in 2^{\mathbb{N}}$, when $N\geq 1$, let $\phi_{\leq N}(r)=\phi(N^{-1}r)$. Then, we define
$\phi_{1}(r):=\phi(r)$, and $\phi_{N}(r)=\phi_{\leq N}(r)-\phi_{\leq \frac N2}(r)$ for any $N\geq 2$. We define the inhomogeneous Littlewood-Paley dyadic operator
$$
f_1=P_{ 1}f:=\mathscr{F}^{-1}(\phi_{ 1}(|\xi|)\widehat{f}(\xi)),
$$
and for any $N\geq 2$,
$$
f_{ N}=P_{ N}f:=\mathscr{F}^{-1}(\phi_{ N}(|\xi|)\widehat{f}(\xi)).
$$
Then, by definition, we have $f=\sum_{N\in 2^{\N}}f_N$. Moreover, we also define the following:
	\begin{align*}
	&f_{\leq N}=P_{\leq N}f:=\mathscr{F}^{-1}(\phi_{\leq N}(|\xi|)\widehat{f}(\xi)),\\
	&f_{ \ll N}=P_{ \ll N}f:=\mathscr{F}^{-1}(\phi_{ \leq N}(2^5|\xi|)\widehat{f}(\xi)),\\
&f_{\lsm N}=P_{\lsm N}f:=\mathscr{F}^{-1}(\phi_{ \leq N}(2^{-5}|\xi|)\widehat{f}(\xi)).
	\end{align*}
	We also define that $f_{\geq N}=P_{\geq N}f:=f-f_{\leq N}$, $f_{\gg N}=P_{\gg N}f:=f-P_{\lsm N}f$, and $f_{\gtrsim N}=P_{\gtrsim N}f:=f-P_{ \ll N}f$.

	Next, we show the Triebel-Lizorkin Spaces $F_{p}^{\alpha, q}$ with the corresponding norm as follows,
	\EQ{
		\|u\|_{F_{p}^{\alpha, q}}=\|u\|_{L_x^p}+\|N^{\alpha}P_Nu\|_{ L_x^p l_{N\in 2^{\mathbb{N}}}^q}.
	}
	For any $1\leq p <\infty$, we define $l_{N}^p=l_{N\in 2^{\mathbb{N}}}^p$ by its norm
	$$
	\|c_N\|_{l_{N\in 2^{\mathbb{N}}}^p}^p:=\sum_{N\in 2^{\mathbb{N}}}|c_N|^p.
	$$
	For $p =\infty$, we define $l_{N}^{\infty}=l_{N\in 2^{\mathbb{N}}}^{\infty}$ by its norm
	$$
	\|c_N\|_{l_{N\in 2^{\mathbb{N}}}^{\infty}}:=\sup_{N\in 2^{\mathbb{N}}}|c_N|.
	$$
	In this paper, we also use the following abbreviations
	$$
	\sum_{N\geq M}:=\sum_{N, M\in 2^{\mathbb{N}}: N\geq M}, \quad \sum_{N\gtrsim M}:=\sum_{N, M\in 2^{\mathbb{N}}: N\geq 2^{-5}M},\mbox{ and } \sum_{N\ll M}:=\sum_{N, M\in 2^{\mathbb{N}}: N\leq 2^{-5}M}.
	$$
Finally, we give the definition of the Schr\"odinger-admissible pair.
Let the pair $(q, r)$ satisfy
		\begin{align*}
			2\leq q, r\leq \infty  ,\quad \frac{2}{q}+\frac{1}{r}=\frac{1}{2},
		\end{align*}
then we say that the pair $(q, r)$ is Schr\"odinger-admissible.

	\subsection{Basic lemmas}

	\quad In this section, we state some preliminary estimates that will be used in our later sections.
	Firstly, we introduce the following Bernstein estimates that will be used frequently.
\begin{lem}[Bernstein estimates\label{lem:Bernstein}]
		For any $1\leq p \leq q \leq \infty$, $s\geq 0$, and $f\in L_x^p(\R^d)$,
		\begin{align*}
			\|P_{\geq N} f\|_{L_x^p(\R^d)}&\lesssim  N^{-s}\||\nabla|^{ s}P_{\geq N} f\|_{L_x^p(\R^d)},\\
			\||\nabla|^{ s}P_{\leq N} f\|_{L_x^p(\R^d)}&\lesssim N^s\|P_{\leq N} f\|_{L_x^p(\R^d)},\\
			\||\nabla|^{\pm s}P_N f\|_{L_x^p(\R^d)}&\sim N^{\pm s}\|P_N f\|_{L_x^p(\R^d)},\\
			\|P_{\leq N} f\|_{L_x^q(\R^d)}&\lesssim  N^{\frac dp-\frac dq}\|P_{\leq N} f\|_{L_x^p(\R^d)},\\
			\|P_N f\|_{L_x^q(\R^d)}&\lesssim  N^{\frac dp-\frac dq}\|P_N f\|_{L_x^p(\R^d)}.
		\end{align*}
	\end{lem}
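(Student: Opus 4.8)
The plan is to realize each Littlewood--Paley operator as convolution with a rescaled fixed kernel and then read off all five inequalities from Young's convolution inequality together with the scaling of $L^r_x$-norms. Writing $\psi(r):=\phi(r)-\phi(2r)$, one has $\phi_N(r)=\psi(N^{-1}r)$ for $N\ge 2$, so that $P_Nf=K_N*f$ with $K_N(x)=N^d\check\psi(Nx)$; likewise $P_{\le N}f=k_N*f$ with $k_N(x)=N^d\check\phi(Nx)$, and $\check\psi,\check\phi$ are Schwartz. The one computation behind every power of $N$ is the dilation identity $\norm{N^d g(N\cdot)}_{L^r_x(\R^d)}=N^{d/r'}\norm{g}_{L^r_x(\R^d)}$, where $\frac1{r'}=1-\frac1r$; in particular an $L^1$ kernel gives an $N$-independent operator norm, while $\norm{K_N}_{L^r_x}\sim N^{d(1-1/r)}$.

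First I would prove the two-sided bound on $P_N$, which is the cornerstone. Fix $\tilde\psi\in C_c^\infty(\R^d\setminus\{0\})$ with $\tilde\psi\equiv1$ on $\mathrm{supp}\,\psi$. Then the symbol of $|\nabla|^{\pm s}P_N$ factors as $|\xi|^{\pm s}\phi_N(|\xi|)=N^{\pm s}\,m_\pm(\xi/N)\,\phi_N(|\xi|)$ with $m_\pm(\xi)=|\xi|^{\pm s}\tilde\psi(|\xi|)\in C_c^\infty(\R^d\setminus\{0\})$, the key point being that the support avoids the origin, so $m_\pm$ is smooth and $\check m_\pm$ is Schwartz. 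Hence $|\nabla|^{\pm s}P_Nf=N^{\pm s}\big(N^d\check m_\pm(N\cdot)\big)*P_Nf$, and Young with the uniformly $L^1$ kernel yields $\norm{|\nabla|^{\pm s}P_Nf}_{L^p_x}\lesssim N^{\pm s}\norm{P_Nf}_{L^p_x}$. Applying the same factorization with the opposite sign to $|\nabla|^{\pm s}P_Nf$ gives the reverse inequality, hence the claimed $\sim$.

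The gain-of-integrability bounds follow from Young with an $L^r$ kernel: choosing $\frac1{r'}=\frac1p-\frac1q$ (so that $1+\frac1q=\frac1r+\frac1p$ and $r\ge1\iff q\ge p$), I would write $P_{\le N}f=\tilde k_N*P_{\le N}f$ and $P_Nf=\tilde K_N*P_Nf$, where $\tilde k_N,\tilde K_N$ are the inverse transforms of fattened cutoffs equal to $1$ on the spectral supports of $P_{\le N}f$ and $P_Nf$, hence Schwartz. Then $\norm{P_{\le N}f}_{L^q_x}\le\norm{\tilde k_N}_{L^r_x}\norm{P_{\le N}f}_{L^p_x}$ with $\norm{\tilde k_N}_{L^r_x}\sim N^{d/r'}=N^{d/p-d/q}$, and likewise for $P_N$. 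Finally, the two remaining inequalities are of the same multiplier type but with genuinely singular fixed symbols. For $|\nabla|^sP_{\le N}$ one writes $|\nabla|^sP_{\le N}f=\big(N^s\,N^d\check h(N\cdot)\big)*P_{\le N}f$ with $h(\zeta)=|\zeta|^s\tilde\Phi(\zeta)$ compactly supported and $\tilde\Phi\equiv1$ on the rescaled spectrum; for $|\nabla|^{-s}$ restricted to the range of $P_{\ge N}$ (with $s>0$; the case $s=0$ is trivial) the relevant symbol is $\ell(\zeta)=|\zeta|^{-s}\tilde\theta(\zeta)$ with $\tilde\theta\in C^\infty$ supported in $\{|\zeta|\ge\frac12\}$ and $\equiv1$ on $\{|\zeta|\ge1\}$, smooth and decaying like $|\zeta|^{-s}$. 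In both cases $\check h,\check\ell\in L^1(\R^d)$, and Young then closes these two estimates with the correct powers $N^{s}$ and $N^{-s}$.

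The routine parts are the dilation identity and Young's inequality, including the endpoints $p,q\in\{1,\infty\}$. The one point that requires genuine care, and which I regard as the main obstacle, is the uniform-in-$N$ control of the kernels: everything hinges on the fixed symbols $m_\pm,h,\ell$ having $L^1$ (or $L^r$) inverse Fourier transforms. For the annular pieces this is immediate, since the symbols are smooth and supported away from the origin, hence Schwartz. For $h$ and $\ell$ it is the standard but less trivial fact that a compactly supported symbol with an isolated $|\zeta|^{s}$-singularity produces a kernel that is smooth at the origin and decays like $|x|^{-d-s}$ (integrable at infinity), while a smooth symbol of decay $|\zeta|^{-s}$ produces a kernel with an integrable $|x|^{-d+s}$-singularity at the origin and rapid decay at infinity. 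Once this $L^1$-boundedness is in hand, the rescaling makes every constant independent of $N$ and the five estimates follow uniformly.
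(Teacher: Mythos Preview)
Your proof is correct and follows the standard route: realize each Littlewood--Paley projector as convolution with a fixed Schwartz (or $L^1$) kernel dilated by $N$, then read off the powers of $N$ from Young's inequality and the scaling of $L^r$-norms. The one point that deserves attention, and that you handle correctly, is the verification that the rescaled symbols $h(\zeta)=|\zeta|^s\tilde\Phi(\zeta)$ and $\ell(\zeta)=|\zeta|^{-s}\tilde\theta(\zeta)$ have $L^1$ inverse Fourier transforms; your description of the kernel behaviour (decay like $|x|^{-d-s}$ at infinity for $\check h$, an integrable $|x|^{-d+s}$ singularity at the origin and rapid decay at infinity for $\check\ell$) is accurate and is exactly what makes the first two estimates work uniformly in $N$.

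The paper itself does not give a proof of this lemma: it is stated in the preliminaries as a standard tool and used freely throughout. So there is no approach in the paper to compare against; your argument is the textbook one and is complete as written.
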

%The following Kato-Ponce inequality will be frequently used in this paper. The result was originally proved in \cite{Kato-Ponce-1988} and then extended to the endpoint case in \cite{B-L-14, Li-19} recently.
%	\begin{lem}[Kato-Ponce inequality\label{Kato-Ponce}]
%		For $s>0$, $1<p, p_1, p_2, p_3, p_4\leq \infty$ satisfying $\frac 1p=\frac 1{p_1}+\frac 1{p_2}$ and $\frac 1p=\frac 1{p_3}+\frac 1{p_4}$, then the following inequality holds:
%		\EQ{
%			\|\langle\nabla \rangle^s(f g)\|_{L_x^p}\leq C( \|\langle\nabla \rangle^sf \|_{L_x^{p_1}}\|g\|_{L_x^{p_2}}+\|\langle\nabla \rangle^sg \|_{L_x^{p_3}}\|f\|_{L_x^{p_4}}),
%		}
%		where the constant $C$ depends on $s, p, p_1, p_2, p_3, p_4$. If $s>\frac dp$, then the following inequality holds:
%		\EQ{
%			\|\langle\nabla \rangle^s(f g)\|_{L_x^p}\leq C \|\langle\nabla \rangle^s f \|_{L_x^{p}}\|\langle\nabla \rangle^s g\|_{L_x^{p}},
%		}
%		where the constant $C$ depends on $s$ and $p$.
%	\end{lem}

	\begin{lem}[Schur's test\label{lem:Schur}]
		For any $a>0$, let sequences $\{a_N\}, \{b_N\}\in l_{N\in 2^{\N}}^2$, then we have
		\EQ{
			\sum_{ N\geq N_1}\Big({\frac{N_1}{N}}\Big)^a a_N b_{N_1}\lesssim\|a_N\|_{l_N^2}\|b_N\|_{l_N^2}.
		}
	\end{lem}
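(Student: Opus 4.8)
The plan is to recognize the claimed inequality as the bilinear form associated with the dyadic kernel $K(N,N_1)=\big(\tfrac{N_1}{N}\big)^a$ restricted to the region $N\geq N_1$, and to prove it by the symmetric Cauchy--Schwarz argument that underlies the Schur test. The crucial structural feature is that, on dyadic scales, the weight $\big(\tfrac{N_1}{N}\big)^a$ decays geometrically away from the diagonal $N=N_1$ precisely because $a>0$; this is exactly what makes every one-variable dyadic sum of the weight converge to a finite constant depending only on $a$.

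First I would split the exponent symmetrically, writing $\big(\tfrac{N_1}{N}\big)^a=\big(\tfrac{N_1}{N}\big)^{a/2}\cdot\big(\tfrac{N_1}{N}\big)^{a/2}$, and distribute one half-power to the factor $a_N$ and the other to $b_{N_1}$. Applying the Cauchy--Schwarz inequality in the pair of indices $(N,N_1)$ over the region $\{N\geq N_1\}$ then factors the double sum into a product of two square roots, namely
\[
\sum_{N\geq N_1}\Big(\tfrac{N_1}{N}\Big)^a a_N b_{N_1}\le\Big(\sum_{N\geq N_1}\Big(\tfrac{N_1}{N}\Big)^a|a_N|^2\Big)^{1/2}\Big(\sum_{N\geq N_1}\Big(\tfrac{N_1}{N}\Big)^a|b_{N_1}|^2\Big)^{1/2}.
\]

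Next I would evaluate the two inner dyadic sums. In the first factor, summing over $N_1\leq N$ with $N$ fixed and parametrizing $N=2^n$, $N_1=2^{n_1}$ gives $\sum_{n_1\le n}2^{-a(n-n_1)}=\sum_{k\ge 0}2^{-ak}=(1-2^{-a})^{-1}=:C_a$, so the first factor is controlled by $C_a\|a_N\|_{l_N^2}^2$. Symmetrically, in the second factor, fixing $N_1$ and summing over $N\geq N_1$ yields the same geometric series $\sum_{k\ge 0}2^{-ak}=C_a$, so the second factor is controlled by $C_a\|b_N\|_{l_N^2}^2$. Multiplying the two square roots produces the desired bound with implicit constant $C_a=(1-2^{-a})^{-1}$.

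There is no serious obstacle here; the only point demanding care is the hypothesis $a>0$, without which the off-diagonal geometric series diverges and the inequality fails. If one prefers an abstract formulation, the two computations above are precisely the row-sum and column-sum bounds in the classical Schur test, and invoking that test gives the operator-norm bound $\sqrt{C_a\cdot C_a}=C_a$ directly; the Cauchy--Schwarz presentation simply keeps the constant explicit.
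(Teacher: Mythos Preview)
Your argument is correct and is the standard Cauchy--Schwarz/Schur-test proof of this dyadic inequality. The paper states this lemma without proof (as a well-known preliminary), so there is nothing to compare against; your write-up would serve perfectly well as the omitted proof.
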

Next, we give an elementary estimate which shall be used later.
\begin{lem}\label{lem:I-M}
Let the function $\phi_{\alpha}$ be
		\EQ{
			\phi_{\alpha}(x, y):=|x|^{\alpha}-|y|^{\alpha},
		}
		with $\alpha>0$ and $\phi_{2}(x, y)\neq 0$. Then for any
		$\beta<2$, we have
		\EQn{\label{ph-div}
			\frac{\phi_{\beta}(x, y)}{\phi_{2}(x, y)}\sim {\mbox{min}}\{|x|^{\beta-2}, |y|^{\beta-2}\}.
		}
\end{lem}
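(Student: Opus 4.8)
The plan is to reduce the two-variable estimate to a one-variable statement by a scaling argument. By the symmetry $\phi_\alpha(x,y)=-\phi_\alpha(y,x)$ for every $\alpha$, the ratio $\phi_\beta/\phi_2$ is symmetric in $x,y$, so without loss of generality I would assume $|y|\le|x|$; the target then becomes $\phi_\beta(x,y)/\phi_2(x,y)\sim|x|^{\beta-2}$. Next, since the ratio is homogeneous of degree $\beta-2$ in $(x,y)$ jointly, I would set $t=|y|/|x|\in[0,1]$ and factor out $|x|^{\beta-2}$, reducing everything to showing that the single-variable function
\begin{equation*}
g(t):=\frac{1-t^{\beta}}{1-t^{2}},\qquad t\in[0,1),
\end{equation*}
is bounded above and below by positive constants depending only on $\beta$ (note $\phi_2\ne0$ forces $|x|\ne|y|$, i.e. $t\ne1$).

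The core of the argument is therefore the elementary claim that $g(t)\sim 1$ on $[0,1)$. First I would record the boundary behaviour: $g(0)=1$, and $\lim_{t\to1^-}g(t)=\beta/2$ by L'Hôpital (or by writing $1-t^\beta=(1-t)(1+t+\cdots)$-type expansions), both of which are positive finite constants since $\beta>0$. Then, because $g$ is continuous on the compact interval $[0,1]$ after filling in the removable singularity at $t=1$ with the value $\beta/2$, it attains positive maximum and minimum there; it remains only to check $g>0$ on $(0,1)$, which is clear since numerator and denominator are both positive for $0<t<1$ when $\beta>0$. This yields $c_\beta\le g(t)\le C_\beta$ with $c_\beta,C_\beta>0$, and unwinding the scaling gives $\phi_\beta(x,y)/\phi_2(x,y)\sim|x|^{\beta-2}=\min\{|x|^{\beta-2},|y|^{\beta-2}\}$ (the last equality because $\beta-2<0$ and $|y|\le|x|$).

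I do not expect a serious obstacle here; the only mild subtlety is bookkeeping the sign conventions so that the symmetry reduction is clean — in particular making sure the claimed equivalence $\sim$ is understood as a two-sided bound by positive constants, and that the hypothesis $\beta<2$ (hence $\beta-2<0$) is exactly what turns $|x|^{\beta-2}$ into the minimum of the two powers, while $\beta>0$ is exactly what keeps $g$ from vanishing or blowing up at the endpoints. One should also note the estimate is genuinely false for $\beta=2$ (the ratio is identically $1$, not $\min\{|x|^0,|y|^0\}$ in any nontrivial sense) and for $\beta>2$ the roles of the minimum and maximum would swap, so the restriction $\beta<2$ is used essentially.
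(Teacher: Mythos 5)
Your proof is correct, but it takes a different route from the paper. The paper argues by a case split: when $|x|\gg|y|$ or $|x|\ll|y|$ it uses $|\phi_\alpha(x,y)|\sim\max\{|x|^\alpha,|y|^\alpha\}$ for both $\alpha=\beta$ and $\alpha=2$, and when $|x|\sim|y|$ it invokes the mean value theorem to get $\phi_\alpha(x,y)\sim|x|^{\alpha-1}(|x|-|y|)$, so that the factor $|x|-|y|$ cancels in the quotient. You instead exploit the antisymmetry to reduce to $|y|\le|x|$, use homogeneity of degree $\beta-2$ to factor out $|x|^{\beta-2}$, and then show the one-variable function $g(t)=(1-t^\beta)/(1-t^2)$ is pinched between positive constants on $[0,1]$ via continuity, the values $g(0)=1$ and $g(1^-)=\beta/2$, and compactness. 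Your argument is more uniform (no regime split, and the cancellation at $|x|\sim|y|$ is absorbed into the removable singularity of $g$ at $t=1$ rather than handled by the MVT), at the cost of a nonconstructive constant from compactness; the paper's case analysis is slightly more hands-on but mirrors the frequency-regime decompositions ($|\xi|\sim|\xi_3|$ versus $|\xi|\ll|\xi_3|$ or $|\xi|\gg|\xi_3|$) used later in the well-posedness estimates. Both arguments need $\beta>0$, which is implicit in the lemma's requirement $\alpha>0$ for $\phi_\alpha$, and you use it correctly at both endpoints of $[0,1]$.

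One small correction to your closing remark: at $\beta=2$ the claimed equivalence does not actually fail, since then $\phi_\beta/\phi_2\equiv1$ and $\min\{|x|^0,|y|^0\}=1$, so it holds trivially; the role of the hypothesis $\beta<2$ is, as you say elsewhere, to guarantee $\beta-2<0$ so that the factored power $|x|^{\beta-2}$ (with $|y|\le|x|$) is indeed the minimum of the two powers. This does not affect the validity of your proof.
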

\begin{proof}
		When $|x|\gg |y|$ or $|x|\ll |y|$, we have that for any $\alpha >0$,
$$|\phi_{\alpha}(x, y)|\sim \mbox{max}\{|x|^{\alpha},|y|^{\alpha}\}.$$
 Since $\beta-2<0$, we get
		$$	\frac{\phi_{\beta}(x, y)}{\phi_{2}(x, y)}\sim \mbox{min}\{|x|^{\beta-2}, |y|^{\beta-2}\}.$$
		When $|x|\sim |y|$, by the mean value theorem,
		we can easily obtain
		$$\phi_{\alpha}(x, y)\sim |x|^{\alpha -1}(|x|-|y|)\sim |y|^{\alpha -1}(|x|-|y|),\quad\quad \mbox{for any } \alpha >0,$$and thus
		\EQ{
			\frac{\phi_{\beta}(x, y)}{\phi_{2}(x, y)}\sim |x|^{\beta-2} \sim |y|^{\beta-2}.
		}
		This proves \eqref{ph-div}.
\end{proof}
	Next, we recall the well-known Strichartz's estimates.
	\begin{lem}\label{lem:strichartz}
		(Strichartz's estimates, see \cite{Keel-Tao-1998, Cazenave--03, Strichartz-1977, Ginibre-Velo-1985}) Let $I\subset \R$ be a time interval. Let $(q_j, r_j), j=1,2,$ be Schr\"odinger-admissible,
		then the following statements hold:
		\begin{align}\label{1.2222}
			\|e^{it\pd_x^2}f\|_{L_t^{q_j}L_x^{r_j}(I\times{\R})}\lesssim\|f\|_{L^2(\R)};
		\end{align}
		and
		\begin{align}\label{1.222234}
			\Big\|\int_0^t e^{i(t-s)\pd_x^2}F(s)ds\Big\|_{L_t^{q_1}L_x^{r_1}(I\times{\R})}\lesssim \|F\|_{L_t^{q_2'}L_x^{r_2'}(I\times{\R})},
		\end{align}
		where $\frac{1}{q_2}+\frac{1}{q_2'}=\frac{1}{r_2}+\frac{1}{r_2'}=1$.
	\end{lem}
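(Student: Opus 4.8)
\emph{Proof proposal.} The plan is to reduce both estimates to a single pointwise-in-time dispersive bound for the free propagator, combined with an abstract $TT^*$/duality argument and the Hardy--Littlewood--Sobolev inequality. The starting point is the explicit kernel representation $e^{it\partial_x^2}f=K_t*f$ with $K_t(x)=(4\pi i t)^{-\frac12}e^{i|x|^2/(4t)}$, from which $\|K_t\|_{L_x^\infty}\lesssim |t|^{-\frac12}$ and hence the dispersive estimate
$$
\|e^{it\partial_x^2}f\|_{L_x^\infty}\lesssim |t|^{-\frac12}\|f\|_{L_x^1}.
$$
Interpolating (Riesz--Thorin) this against the $L^2$-isometry $\|e^{it\partial_x^2}f\|_{L_x^2}=\|f\|_{L_x^2}$ yields, for every $2\le r\le\infty$,
$$
\|e^{it\partial_x^2}f\|_{L_x^r}\lesssim |t|^{-(\frac12-\frac1r)}\|f\|_{L_x^{r'}}.
$$

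To establish the homogeneous estimate \eqref{1.2222}, I would invoke the $TT^*$ principle: the bound $\|e^{it\partial_x^2}f\|_{L_t^qL_x^r}\lesssim\|f\|_{L^2}$ is equivalent to the bilinear form estimate
$$
\Big|\int_I\!\!\int_I \big\langle e^{i(t-s)\partial_x^2}F(s),\,G(t)\big\rangle_{L_x^2}\,ds\,dt\Big|
\lesssim \|F\|_{L_t^{q'}L_x^{r'}}\|G\|_{L_t^{q'}L_x^{r'}}.
$$
Applying the dispersive estimate above together with H\"older in $x$ gives the pointwise control $|\langle\cdots\rangle|\lesssim |t-s|^{-(\frac12-\frac1r)}\|F(s)\|_{L_x^{r'}}\|G(t)\|_{L_x^{r'}}$, and the one-dimensional Hardy--Littlewood--Sobolev inequality in the time variable then closes the estimate exactly under the admissibility relation $\frac2q+\frac1r=\frac12$, since this forces the convolution exponent to satisfy $0<\frac12-\frac1r<1$. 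A useful structural remark is that in one dimension admissibility already forces $4\le q\le\infty$, so the Keel--Tao endpoint $q=2$ never arises and HLS is always applied in its non-endpoint range; the single degenerate case $(q,r)=(\infty,2)$ is just the trivial $L^2$ conservation law.

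For the inhomogeneous estimate \eqref{1.222234}, I would first prove the \emph{untruncated} version $\big\|\int_I e^{i(t-s)\partial_x^2}F(s)\,ds\big\|_{L_t^{q_1}L_x^{r_1}}\lesssim\|F\|_{L_t^{q_2'}L_x^{r_2'}}$ by composing the dual of the homogeneous estimate for $(q_2,r_2)$, namely $F\mapsto\int_I e^{-is\partial_x^2}F(s)\,ds$ bounded from $L_t^{q_2'}L_x^{r_2'}$ into $L_x^2$, with the homogeneous estimate for $(q_1,r_1)$ (equivalently, by rerunning the bilinear HLS argument with two distinct admissible pairs). The genuinely delicate point, which I expect to be the main obstacle, is passing from the full-interval integral $\int_I$ to the retarded integral $\int_0^t$, since the order structure of time enters only here and the symmetric argument does not see it. This step is handled by the Christ--Kiselev lemma, which upgrades the untruncated bound to the retarded one provided $q_1>q_2'$; because $q_1\ge 4$ and $q_2'\le\frac43$ in one dimension, this strict inequality is automatic and the lemma always applies, completing the proof.
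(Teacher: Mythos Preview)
Your argument is correct and is the standard route to the one-dimensional Strichartz estimates: dispersive decay from the explicit fundamental solution, Riesz--Thorin interpolation, the $TT^*$ reduction combined with Hardy--Littlewood--Sobolev in time, and finally Christ--Kiselev to pass to the retarded integral. Your observation that in one dimension admissibility forces $q\ge 4$, so that neither the Keel--Tao endpoint nor the borderline case of Christ--Kiselev ever arises, is exactly the right structural point.

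The paper, however, does not prove this lemma at all: it is stated as a classical result with citations to Keel--Tao, Cazenave, Strichartz, and Ginibre--Velo, and no argument is supplied. So there is nothing to compare against beyond noting that what you have written is precisely the proof one finds in those references (in the non-endpoint regime relevant here).
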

	The next lemma is the smoothing effects.
	\begin{lem}
		(Smoothing effects, see \cite{KenigPonceVega-CPAM-93, LinaresPonce-09}). Let $I\subset \R$ be an interval, including $I=R$. Then\\
		1)
		\EQn{\label{Smooth1}
			\|D^{\frac 12}e^{it\pd_x^2}f \|_{L_x^\infty L_t^2(\R\times I )} \lesssim  \| f  \|_{L_x^2(\R)},
		}
		for all $f\in L_x^2(\R)$; and\\
		2)
		\EQn{\label{Smooth2}
			\Big\|  D^{\frac 12} \int_0^t e^{i(t-t')\pd_x^2} F(x,t')\dt'\Big\|_{L_t^\infty L_x^2(I\times \R)}
			\lesssim           \|F\|_{L_x^1 L_t^2(\R\times I)};
		}
		3)
		\EQn{\label{Smooth3}
			\Big\|  \partial_x \int_0^t e^{i(t-t')\pd_x^2} F(x,t')\dt' \Big\|_{L_x^\infty L_t^2(\R\times I)}
			\lesssim           \|F\|_{L_x^1 L_t^2(\R\times I)},
		}
		for all $F\in L_x^1 L_t^2(\R\times I)$.
	\end{lem}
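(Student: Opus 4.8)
The plan is to prove the three estimates in the logical order (1) $\Rightarrow$ (their non-retarded companions) $\Rightarrow$ (2),(3), isolating at the end the one genuinely delicate point, namely the causal truncation $\int_0^t$.

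\emph{Estimate \eqref{Smooth1}.} First I would prove the homogeneous bound by a change of variables together with Plancherel in time, uniformly in $x$. Writing $D^{\frac12}e^{it\partial_x^2}f(x)=\int_\R e^{ix\xi}|\xi|^{\frac12}e^{-it\xi^2}\widehat f(\xi)\,d\xi$ and splitting into $\xi>0$ and $\xi<0$, on each half I substitute $\tau=\xi^2$. For $\xi>0$ the $\xi$-integral becomes $\tfrac12\int_0^\infty e^{-it\tau}\,\tau^{-\frac14}e^{ix\sqrt\tau}\widehat f(\sqrt\tau)\,d\tau$, i.e. (up to a constant) the inverse time-Fourier transform of $g_+(\tau)=\tau^{-\frac14}e^{ix\sqrt\tau}\widehat f(\sqrt\tau)\mathbf 1_{\tau>0}$, and the $\xi<0$ part yields $g_-$ with $\widehat f(-\sqrt\tau)$. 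Both transforms are supported in $\tau>0$, so Plancherel in $t$ gives $\|D^{\frac12}e^{it\partial_x^2}f(x)\|_{L_t^2}^2\lesssim\int_0^\infty(|g_+|^2+|g_-|^2)\,d\tau$, and undoing $\tau=\xi^2$ turns $\int_0^\infty\tau^{-\frac12}|\widehat f(\pm\sqrt\tau)|^2\,d\tau$ into $\|\widehat f\|_{L^2}^2\sim\|f\|_{L^2}^2$. The crucial point is that $|e^{ix\sqrt\tau}|=1$ removes all $x$-dependence, so the bound is uniform in $x$ and $\sup_x$ yields \eqref{Smooth1}.

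\emph{Non-retarded companions.} Next I would record the full-line versions of \eqref{Smooth2}--\eqref{Smooth3}, which follow from \eqref{Smooth1} by duality. Let $P\in\{\partial_x,D\}$ and let $m(\partial_x):=P\,D^{-1}$ be the operator with bounded symbol $P(\xi)/|\xi|$ (equal to $i\,\sgn\xi$ if $P=\partial_x$ and to $1$ if $P=D$). Since $P=D^{\frac12}m(\partial_x)D^{\frac12}$ and these commute with the group, $P\int_\R e^{i(t-t')\partial_x^2}F\,dt'=D^{\frac12}e^{it\partial_x^2}\big(m(\partial_x)D^{\frac12}\int_\R e^{-it'\partial_x^2}F\,dt'\big)$, so \eqref{Smooth1} reduces the $L_x^\infty L_t^2$ norm to $\|D^{\frac12}\int_\R e^{-it'\partial_x^2}F\,dt'\|_{L_x^2}$ (the symbol $m$ is an $L_x^2$-isometry up to a constant, hence harmless on this $L_x^2$ input). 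Pairing against $\psi$ with $\|\psi\|_{L^2}=1$ and moving the group gives $\int_\R\langle F(\cdot,t'),e^{it'\partial_x^2}D^{\frac12}\psi\rangle\,dt'\le\|F\|_{L_x^1L_t^2}\,\|D^{\frac12}e^{it\partial_x^2}\psi\|_{L_x^\infty L_t^2}\lesssim\|F\|_{L_x^1L_t^2}$ by \eqref{Smooth1} once more. This proves the companions with the time integral over all of $\R$.

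\emph{The retarded truncation (main obstacle).} The remaining, and genuinely delicate, step is to replace $\int_\R$ by the causal $\int_0^t$; here the Christ--Kiselev lemma is \emph{not} available because the temporal integrabilities coincide ($L_t^2\to L_t^2$), so the truncation must be handled directly through the oscillatory kernel. Using $K_s(x)=c\,s^{-\frac12}e^{ix^2/(4s)}$ and Minkowski's inequality in $y$, the bound for $v(t):=\int_0^t e^{i(t-t')\partial_x^2}F(\cdot,t')\,dt'$ reduces to showing, uniformly in the spatial lag $z=x-y$, that the causal temporal convolution with kernel $k_z(s)=[PK_s](z)\mathbf 1_{s>0}$ is bounded on $L_t^2$; extending $F(y,\cdot)$ by zero this is a genuine convolution of operator norm $\|\widehat{k_z}\|_{L_\tau^\infty}$. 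For $P=\partial_x$ one has the explicit $k_z(s)\sim z\,s^{-\frac32}e^{iz^2/(4s)}\mathbf 1_{s>0}$, and the scaling $s=z^2u$ collapses $\widehat{k_z}(\tau)$ to the single oscillatory integral $J(\lambda)=\int_0^\infty u^{-\frac32}e^{i(1/(4u)-\lambda u)}\,du$ with $\lambda=\tau z^2$. The main obstacle is exactly the uniform bound $\sup_\lambda|J(\lambda)|<\infty$: the $u^{-\frac32}$ singularity at the origin is integrable only through the oscillation $e^{i/(4u)}$ (after $w=1/u$ it becomes $\int_0^\infty w^{-\frac12}e^{i(w/4-\lambda/w)}\,dw$), while for $\lambda\le 0$ one must control the stationary point $u\sim|\lambda|^{-1/2}$ by stationary phase; the $D$-kernel is less explicit but is handled by the same $\tau=\xi^2$ reduction. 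Granting this, $\|Pv\|_{L_x^\infty L_t^2}\lesssim\|F\|_{L_x^1L_t^2}$ for both $P$, which is \eqref{Smooth3} for $P=\partial_x$. Finally I would deduce \eqref{Smooth2} from the $D$-bound by the energy identity for $w=D^{\frac12}v$: since $\langle i\partial_x^2 w,w\rangle\in i\R$, one gets $\frac{d}{dt}\|w\|_{L_x^2}^2=2\re\langle F,Dv\rangle$, and integrating (with $w(0)=0$) and applying Cauchy--Schwarz in $(x,t)$ yields $\|w(t)\|_{L_x^2}^2\lesssim\|F\|_{L_x^1L_t^2}\,\|Dv\|_{L_x^\infty L_t^2}\lesssim\|F\|_{L_x^1L_t^2}^2$ uniformly in $t$, which is \eqref{Smooth2}.
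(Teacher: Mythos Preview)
The paper does not prove this lemma; it is quoted as a known result with citations to Kenig--Ponce--Vega and Linares--Ponce, so there is no in-paper proof to compare against. Your outline follows the classical route from those references and is sound in its architecture: Plancherel-in-time after the substitution $\tau=\xi^2$ for \eqref{Smooth1}, duality/$TT^*$ for the non-retarded companions, and direct analysis of the causal temporal convolution kernel for \eqref{Smooth3}, where the reduction to $\sup_\lambda|J(\lambda)|<\infty$ is the standard move and the bound follows from the stationary-phase/Fresnel considerations you indicate.

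One simplification is worth pointing out. Your path to \eqref{Smooth2} via the energy identity and the $D$-analogue of \eqref{Smooth3} is correct but unnecessarily indirect. Because the output norm in \eqref{Smooth2} is $L_t^\infty L_x^2$ (not $L_t^2$), the causal truncation poses no obstacle at all: for each fixed $t$ one simply dualizes against $\psi\in L_x^2$ to get
\[
\Big\|D^{\frac12}\int_0^t e^{-it'\partial_x^2}F(t')\,dt'\Big\|_{L_x^2}
=\sup_{\|\psi\|_{L_x^2}=1}\Big|\int_0^t\big\langle F(t'),\,D^{\frac12}e^{it'\partial_x^2}\psi\big\rangle\,dt'\Big|
\le \|F\|_{L_x^1L_t^2}\,\|D^{\frac12}e^{it'\partial_x^2}\psi\|_{L_x^\infty L_t^2},
\]
and \eqref{Smooth1} bounds the last factor by $\|\psi\|_{L_x^2}$. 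This gives \eqref{Smooth2} immediately from \eqref{Smooth1}, without Christ--Kiselev, without the kernel computation, and without first establishing the $D$-version of \eqref{Smooth3}. The delicate causal analysis is therefore needed only for \eqref{Smooth3}.
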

	
	\begin{remark}\label{521}
		By the estimate \eqref{Smooth3}, and the Littlewood-Paley decomposition, for any $\beta<1$ and any $F\in L_x^1 L_t^2(\R\times I)$, we have
		\EQn{\label{Smooth3-1}
			\Big\|  D^{\beta}P_{\geq 1}\int_0^t e^{i(t-t')\pd_x^2} F(x,t')\dt' \Big\|_{L_x^\infty L_t^2(\R\times I)}
			\lesssim           \|F\|_{L_x^1 L_t^2(\R\times I)}.
		}

	\end{remark}
		We also need the following Littlewood-Paley theory, see the Remark 2.2.2 in \cite{Gra-14}.
	\begin{lem}[Littlewood-Paley theory\label{lem:littlewood-Paley}]
		Let $1<p<\infty$, for any $\alpha\in \R$, we have
		\EQ{
			\|f\|_{F_{p}^{\alpha, 2}}\sim \|\langle\nabla\rangle^{\alpha}f\|_{L_x^p}.
		}
	\end{lem}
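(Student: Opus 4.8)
Since this is the classical Littlewood--Paley characterization of the Bessel potential space $H^{\alpha,p}$, one could simply quote it from \cite{Gra-14} (or Triebel's monograph); but for a self-contained argument the plan is to pass everything through the vector-valued Littlewood--Paley square function inequality and then absorb the operator $\langle\nabla\rangle^{\alpha}$ into the dyadic blocks by the Mikhlin--H\"ormander multiplier theorem. The underlying principle is that $\langle\nabla\rangle^{\alpha}$ acts, modulo operators bounded on $L_x^p$ uniformly in $N$, as multiplication by $N^{\alpha}$ on the $N$-th Littlewood--Paley piece $P_Nf$.

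First I would recall the square function equivalence $\|g\|_{L_x^p}\sim\big\|(\sum_{N\in 2^{\N}}|P_Ng|^2)^{1/2}\big\|_{L_x^p}$ for $1<p<\infty$, and apply it with $g=\langle\nabla\rangle^{\alpha}f$; this reduces the lemma to the two-sided bound $\big\|(\sum_N|P_N\langle\nabla\rangle^{\alpha}f|^2)^{1/2}\big\|_{L_x^p}\sim\|N^{\alpha}P_Nf\|_{L_x^p l_N^2}$, the extra term $\|f\|_{L_x^p}$ in the definition of $F_p^{\alpha,2}$ being dominated by the right-hand side when $\alpha\ge 0$. Next I would fix a fattened projection $\widetilde P_N$ with $\widetilde P_NP_N=P_N$ and observe the exact identity $P_N\langle\nabla\rangle^{\alpha}f=N^{\alpha}m_N(D)\widetilde P_Nf$, where $m_N(\xi):=N^{-\alpha}\langle\xi\rangle^{\alpha}\phi_N(|\xi|)$; since $\langle\xi\rangle\sim N$ on $\supp\phi_N$ for $N\ge2$ and $\langle\xi\rangle\sim1$ for $N=1$, the symbols $m_N$ obey Mikhlin bounds uniformly in $N$. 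The vector-valued Mikhlin theorem then gives $\big\|(\sum_N|m_N(D)h_N|^2)^{1/2}\big\|_{L_x^p}\lesssim\big\|(\sum_N|h_N|^2)^{1/2}\big\|_{L_x^p}$, and taking $h_N=N^{\alpha}\widetilde P_Nf$ yields one inequality; the reverse inequality is symmetric, using instead the reciprocal symbols $N^{\alpha}\langle\xi\rangle^{-\alpha}$ localized to $|\xi|\sim N$, after which one trades $\widetilde P_N$ back for $P_N$ by a final application of the square function inequality.

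The step I expect to be the only real point of care is the uniform-in-$N$ Mikhlin bound for these nonhomogeneous symbols: after the rescaling $\xi\mapsto N\xi$ one must verify that $N^{-\alpha}\langle N\xi\rangle^{\alpha}=(N^{-2}+|\xi|^2)^{\alpha/2}$, together with its reciprocal, is smooth on the annulus $|\xi|\sim1$ with all derivatives bounded independently of $N\ge1$, which is immediate since $N^{-2}+|\xi|^2\sim1$ there. Everything else --- the $N=1$ block, the treatment of the $\|f\|_{L_x^p}$ term for general $\alpha$, and the transfer between $P_N$ and $\widetilde P_N$ --- is routine bookkeeping, or can be bypassed entirely by citing \cite{Gra-14}.
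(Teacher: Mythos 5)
The paper offers no proof of this lemma at all---it simply points to Remark 2.2.2 of \cite{Gra-14}---and your proposal (cite Grafakos, or run the standard square-function reduction plus the uniform-in-$N$ Mikhlin bound for $N^{-\alpha}\langle\xi\rangle^{\alpha}$ and its reciprocal on the annuli, via the vector-valued multiplier theorem) is correct and is precisely the textbook argument behind that citation. One caveat you wave off as ``routine bookkeeping'': for $\alpha<0$ the term $\|f\|_{L_x^p}$ built into the paper's definition of $\|\cdot\|_{F_p^{\alpha,2}}$ is \emph{not} controlled by $\|\langle\nabla\rangle^{\alpha}f\|_{L_x^p}$ (test on a single high-frequency block), so the equivalence as literally stated requires $\alpha\ge 0$ or dropping that term---a defect of the lemma's formulation rather than of your argument, and harmless here since the paper only invokes it with $\alpha=s-\tfrac12>0$ and $\alpha=2$.
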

	%If the order of $l^2$-summation and $L_x^p$-integration is reversed, and let $\alpha=0$, then the corresponding result is
	%\begin{lem}[Littlewood-Paley theory\label{lem:littlewood-Paley}]
	%For $1< p <\infty$ and $f\in L_x^p(\R^d)$. Then, we have
	%\begin{align*}
	%\|f_N\|_{L_x^pl_N^2}\thicksim_p\|f\|_{L_x^p}.
	%\end{align*}
	%\end{lem}

	Next, we show the Coifman-Meyer multiplier theory.
	
	\begin{lem}[Multilinear Coifman-Meyer multiplier estimates, see \cite{Co-Me-91}\label{lem:Coifman-Meyer}]
		Let the function $m$ on ${\R^k}$ be bounded and let $T_m$ be the corresponding m-linear multiplier operator on $\R$
		\begin{align*}
			T_m(f_1,\cdots, f_k)(x)=\int_{\R^k}m(\eta_1,\cdots,\eta_k)\hat{f_1}(\eta_1)\cdots\hat{f_k}(\eta_k)e^{ix\cdot(\eta_1+\cdots+\eta_k)}d\eta_1\cdots d\eta_k.
		\end{align*}
		If $L$ is sufficiently large and $m$ satisfies
		\begin{align*}
			\Big|\partial_{\eta_1}^{\al_1}\cdots\partial_{\eta_k}^{\al_k}m(\eta_1,\cdots,\eta_k)\Big|\lesssim_{\al_1,\cdots, \al_k}(|\eta_1|+\cdots+|\eta_k|)^{-(|\al_1|+\cdots+|\al_k|)},
		\end{align*}
		for multi-indices $\al_1,\cdots, \al_k$ satisfying $|\al_1|+\cdots+|\al_k|\leq L$. Then, for $1< p<\infty$, $1<p_1, \cdots, p_k\leq \infty$ and $\frac 1p=\frac{1}{p_1}+\cdots+\frac{1}{p_k}$, we have
		\begin{align*}
			\|T_m(f_1,\cdots, f_k)\|_{L_x^p}\leq C\|f_1\|_{L_x^{p_1}}\cdots\|f_k\|_{L_x^{p_k}}.
		\end{align*}
	\end{lem}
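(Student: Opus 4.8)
This is the classical Coifman--Meyer multiplier theorem (see \cite{Co-Me-91}); the plan is to reduce $T_m$ to a superposition of paraproducts of Littlewood--Paley pieces of the inputs and then to close the estimate in the range $1<p<\infty$ using only the Littlewood--Paley square-function bound and the Fefferman--Stein maximal inequality, avoiding Hardy-space theory.

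First I would decompose the symbol by the size of the full frequency vector $\eta=(\eta_1,\dots,\eta_k)$: fixing a homogeneous partition $1=\sum_{j\in\Z}\psi(2^{-j}|\eta|)$ with $\psi$ supported in an annulus, set $m_j(\eta)=m(\eta)\psi(2^{-j}|\eta|)$, so that $|\eta|\sim 2^j$ on $\mathrm{supp}\,m_j$, hence $\max_i|\eta_i|\sim 2^j$, and the hypothesis gives $|\partial_\eta^\alpha m_j|\lesssim 2^{-j|\alpha|}$ for $|\alpha|\le L$. Splitting $\mathrm{supp}\,m_j$ into the finitely many regions distinguished by which coordinates realize this maximum, it suffices by symmetry to treat the ``high--low'' configuration ($|\eta_1|\sim 2^j$, $|\eta_i|\lesssim 2^{j-5}$ for $i\ge2$) and the ``high--high'' configurations (at least two coordinates $\sim 2^j$); these regions are encoded by inserting Littlewood--Paley projections $P_{2^j}$ and $P_{\le 2^{j-5}}$ on the corresponding $f_i$.

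Next I would expand each localized symbol in a Fourier series on a cube of side $\sim 2^{j}$ containing its support, $m_j(\eta)=\sum_{\nu\in\Z^k}c_{j,\nu}\,e^{i2^{-j}\nu\cdot\eta}$; integration by parts together with the derivative bounds yields $|c_{j,\nu}|\lesssim_M(1+|\nu|)^{-M}$ \emph{uniformly in $j$} provided $L$ is chosen large enough in terms of $M$ and $k$ --- this is exactly where ``$L$ sufficiently large'' is used. Since $e^{i2^{-j}\nu\cdot\eta}$ merely translates each Littlewood--Paley piece of $f_i$ by $2^{-j}\nu$, which costs at most a polynomial factor $(1+|\nu|)^{C}$ in the square- and maximal-function norms below, the fast decay of $c_{j,\nu}$ permits summation over $\nu$ and reduces the estimate, in the high--low configuration, to
\begin{align*}
\Big\|\sum_{j}(P_{2^j}f_1)\prod_{i=2}^{k}(P_{\le 2^{j-5}}f_i)\Big\|_{L_x^p}
&\lesssim\Big\|\Big(\sum_j|P_{2^j}f_1|^2\Big)^{1/2}\prod_{i=2}^{k}(\mathcal{M}f_i)\Big\|_{L_x^p}\\
&\lesssim\Big\|\Big(\sum_j|P_{2^j}f_1|^2\Big)^{1/2}\Big\|_{L_x^{p_1}}\prod_{i=2}^{k}\|\mathcal{M}f_i\|_{L_x^{p_i}}\\
&\lesssim\prod_{i=1}^{k}\|f_i\|_{L_x^{p_i}},
\end{align*}
where the first step uses the frequency localization of the $j$-th summand and the pointwise bound $|P_{\le2^{j-5}}f_i|\lesssim\mathcal{M}f_i$, the second is H\"older with $1/p=\sum_i 1/p_i$, and the third is the square-function bound together with the maximal inequality ($\mathcal{M}$ being trivially bounded on $L_x^\infty$ when some $p_i=\infty$). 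In the high--high configurations one pairs the two high factors by Cauchy--Schwarz in $j$, producing two square functions with the remaining factors controlled by $\mathcal M$; the relation $1/p=\sum 1/p_i$ with $1<p<\infty$ then lets one distribute the exponents so that every square or maximal function acts in some $L_x^q$ with $1<q\le\infty$.

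The step I expect to be the main obstacle is the uniform-in-$j$ control of the coefficients $c_{j,\nu}$: one must check that the rescaled symbols $\eta\mapsto m_j(2^j\eta)$ form a bounded subset of $C^L$ of a fixed cube with constants independent of $j$ and of the configuration, so that a single choice of $L$ works throughout --- this is precisely what the scale-invariant hypothesis $|\partial_\eta^\alpha m|\lesssim|\eta|^{-|\alpha|}$ provides. The remaining point requiring care is the treatment of $L_x^\infty$-inputs placed in a high slot, which is handled by the standard device of not decomposing such inputs dyadically and passing to the dual multilinear form; apart from this bookkeeping, everything is routine once one stays within $1<p<\infty$, where the Littlewood--Paley characterization of $L_x^p$ and the Fefferman--Stein inequality are directly available.
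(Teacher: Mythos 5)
The paper does not actually prove this lemma: it is quoted as a classical black-box result with the citation \cite{Co-Me-91}, so there is no internal argument to compare against. Judged on its own terms, your sketch is the standard textbook proof of the Coifman--Meyer theorem in the Banach range: dyadic decomposition of the symbol in $|\eta|$, splitting into high--low and high--high configurations, Fourier-series expansion of the rescaled symbol pieces with coefficients decaying like $(1+|\nu|)^{-M}$ uniformly in $j$ (this is indeed where ``$L$ sufficiently large'' enters, and the shifted maximal/square functions do only cost polynomial factors in $|\nu|$), and then the paraproduct estimates via the output square-function characterization of $L^p$ ($1<p<\infty$), pointwise Cauchy--Schwarz in $j$ for the high--high part, H\"older, and Fefferman--Stein. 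For $1<p<\infty$ with all $p_i<\infty$ this closes and is correct.

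The genuine gap is the case $p_i=\infty$, which the lemma as stated (and as the paper actually uses it, e.g.\ with $\|u_0\|_{L_x^\infty}$ in Lemma \ref{lem:nonlinear-estimate-boundary-1d}) explicitly allows. An $L^\infty$ input sitting in a high slot cannot be handled by your toolkit: $\|(\sum_j|P_{2^j}f|^2)^{1/2}\|_{L^\infty}\not\lesssim\|f\|_{L^\infty}$, and the fix you propose --- ``pass to the dual multilinear form'' --- does not stay inside your framework, because if $p_{i_0}=\infty$ then $\sum_{i\neq i_0}1/p_i+1/p'=1$, so the dualized operator must be bounded into $L^1$, outside the range $1<q<\infty$ where your square-function argument applies. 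Closing this case requires precisely the Hardy-space/BMO-type input you set out to avoid: either the Carleson-measure (Carleson embedding) bound for the paraproduct with an $L^\infty\subset BMO$ symbol, or the $H^1$ estimate $\|\sum_j F_j\|_{L^1}\lesssim\|(\sum_j|F_j|^2)^{1/2}\|_{L^1}$ for pieces $F_j$ with lacunary frequency supports, after which the duality argument does go through. This is classical and fixable, but as written your proof only establishes the lemma for $1<p_1,\ldots,p_k<\infty$, not the stated range $1<p_i\leq\infty$.
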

	
	The Coifman-Meyer Multiplier Theorem is reduced to the Mihlin-H\"{o}rmander Multiplier
Theorem when $k=1$ and $1<p<\infty$.
	
	In order to prove the ill-posedness results for the equation \eqref{eq:NLS}, we need the following lemma.
	\begin{lem}\label{ill-posed}(See \cite{B-T-06}).
		Consider a quantitatively well-posed abstract equation in spaces $D$ and $S$,
		\begin{align*}
			u=L(f)+N_k(u,\ldots, u),
		\end{align*}
		which means for all $f\in D$, $u_1, \ldots ,u_k\in S$ and for some constant $C>0$,
		\begin{align*}
			\|L(f)\|_{S}\leq C\|f\|_{D}, \quad \|N_k(u_1, \ldots ,u_k)\|_{S}\leq C\|u_1\|_{S}\ldots\|u_k\|_{S}.
		\end{align*}
		Here $(D, \|\|_{D})$ is a Banach space with initial data and $(S, \|\|_{S})$ is a Banach space of space-time functions. Define
		\begin{align*}
			A_1(f):=L(f), \quad A_n(f):=\sum_{n_1,\ldots ,n_k\geq 1, n_1+\ldots+n_k=n}N_k(A_{n_1}(f), \ldots ,A_{n_k}(f)), n>1.
		\end{align*}
		Then for some $C_1>0$, all $f, g\in D$ and all $n\geq 1$,
		\begin{align*}
			\|A_n(f)-A_n(g)\|_{S}\leq C_1^n\|f-g\|_{D}(\|f\|_{D}+\|g\|_{D})^{n-1}.
		\end{align*}
	\end{lem}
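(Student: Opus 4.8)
The plan is to prove the estimate by induction on $n$, after recording two standing conventions of the abstract framework of \cite{B-T-06}: the solution operator $L$ is \emph{linear} and $N_k$ is a bounded \emph{$k$-linear} map with $k\geq 2$, so that the $A_n$ are genuinely the homogeneous pieces of the Picard expansion. Under these conventions the hypotheses read $\|L(f)\|_S\leq C\|f\|_D$ and $\|N_k(u_1,\dots,u_k)\|_S\leq C\prod_{i=1}^k\|u_i\|_S$, and in particular $A_1(f)-A_1(g)=L(f-g)$.

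First I would establish a size bound for each iterate. Define a scalar majorant sequence by $\sigma_1:=C$ and $\sigma_n:=C\sum_{n_1+\cdots+n_k=n}\sigma_{n_1}\cdots\sigma_{n_k}$ for $n\geq 2$; since $k\geq 2$, every index $n_i$ occurring in the sum is strictly smaller than $n$, so the recursion is well defined. A direct induction using the $k$-linear bound gives $\|A_n(f)\|_S\leq \sigma_n\|f\|_D^{\,n}$ for all $n$ and $f$. To control the growth of $\sigma_n$ I would pass to the generating function $\Sigma(z):=\sum_{n\geq 1}\sigma_n z^n$, which solves the algebraic equation $\Sigma=Cz+C\Sigma^k$ with $\Sigma(0)=0$. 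Since $\partial_{\Sigma}\big(\Sigma-Cz-C\Sigma^k\big)=1-Ck\Sigma^{k-1}$ equals $1$ at the origin (using $k\geq 2$), the analytic implicit function theorem shows $\Sigma$ is holomorphic near $z=0$, hence has positive radius of convergence, and therefore $\sigma_n\leq C_0^{\,n}$ for some $C_0>0$ depending only on $C$ and $k$.

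The algebraic heart of the difference estimate is the multilinear telescoping identity
\[
N_k(a_1,\dots,a_k)-N_k(b_1,\dots,b_k)=\sum_{j=1}^k N_k(a_1,\dots,a_{j-1},\,a_j-b_j,\,b_{j+1},\dots,b_k),
\]
valid because $N_k$ is $k$-linear. Applying it inside each composition in the definition of $A_n(f)-A_n(g)$, taking $S$-norms, and inserting the size bound for the $f$- and $g$-iterates together with the inductive difference bound for the single ``difference slot'' $A_{n_j}(f)-A_{n_j}(g)$, I would arrive, after using $\|f\|_D,\|g\|_D\leq\|f\|_D+\|g\|_D$ to collapse all the data norms, at
\[
\|A_n(f)-A_n(g)\|_S\leq C\sum_{n_1+\cdots+n_k=n}\sum_{j=1}^k\Big(\prod_{i<j}\sigma_{n_i}\Big)\,\gamma_{n_j}\,\Big(\prod_{i>j}\sigma_{n_i}\Big)\,\|f-g\|_D\,(\|f\|_D+\|g\|_D)^{n-1},
\]
where $\gamma_m$ denotes the best constant in the claimed bound at level $m$. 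The induction closes cleanly: with the base case $\gamma_1=C=\sigma_1$, positing $\gamma_m\leq m\,\sigma_m$ for $m<n$ turns the bracketed sum into $\sum_{n_1+\cdots+n_k=n}\big(\sum_{j=1}^k n_j\big)\prod_i\sigma_{n_i}=n\sum_{n_1+\cdots+n_k=n}\prod_i\sigma_{n_i}$, because $\sum_j n_j=n$, whence $\gamma_n\leq n\,\sigma_n$. Combined with Step 1 and the elementary bound $n\leq 2^n$, this gives $\gamma_n\leq n\,C_0^{\,n}\leq C_1^{\,n}$ with $C_1:=2C_0$, which is exactly the assertion.

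I expect the main obstacle to be Step 1: upon full expansion the number of terms in $A_n$ grows like a Catalan-type (exponential) count, so a crude term-by-term estimate cannot produce a clean geometric constant $C_1^{\,n}$; it is the generating-function/analyticity argument that tames this growth and supplies $\sigma_n\leq C_0^{\,n}$. By contrast the difference estimate is comparatively soft once the telescoping identity is available, since the identity $\sum_j n_j=n$ makes the induction lose exactly one polynomial factor $n$, which is harmlessly absorbed into the exponential.
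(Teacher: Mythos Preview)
The paper does not supply its own proof of this lemma; it is quoted verbatim from \cite{B-T-06} with the parenthetical ``See \cite{B-T-06}''. Your argument is correct and is essentially the standard one from that reference: the generating-function bound $\sigma_n\le C_0^{\,n}$ via the analytic implicit function theorem, followed by the multilinear telescoping and the induction $\gamma_n\le n\sigma_n$, is exactly how Bejenaru--Tao handle the Picard iterates. There is nothing to compare against in the present paper itself.
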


	\section{The proof of Theorem \ref{theorem 1} with $r=1$}\label{pf of theorem 1}
Next we proceed to the analysis of well-posedness when $\eta\in L_x^r+L_x^{\infty}(\R)$. In the following, we only need to consider $\eta \in L_x^r$. Indeed, for $\eta=\eta_1+\eta_2$, with $\eta_1\in L_x^r$ and $\eta_2\in L_x^{\infty}$, we denote
$$
\Phi_j(u):=\int_0^t e^{i(t-\rho)\pd_x^2}(\eta_j u)d\rho.
$$
Then $\Phi_1(u)$ and $\Phi_2(u)$ are closed in $H_x^{\gamma_*}$ and $H_x^2$, respectively. Here $\gamma_*=\frac 32-, \mbox{ if } r=1; \quad =\frac 52-\frac 1r,  \mbox{ if } 1<r<2; \quad =2, \mbox{ if } r\geq 2. $ These statements shall be proved in the following three sections. Since $\gamma_*\leq 2$, $\Phi_1(u)$ and $\Phi_2(u)$ are both closed in $H_x^{\gamma_*}$.

In this section, we aim to prove that if $\eta\in L_x^1(\R)$, then the equation \eqref{eq:NLS} is sharp globally well-posed in $H_x^{\frac 32-}(\R)$. We only need to consider the positive time direction case, that is $\R^+$, since the $\R^-$ case can be treated in the same way.

\subsection{Local well-posedness in $H_x^{\frac 32-}(\R)$}\label{LWP-1} We firstly give the local well-posedness result and its proof.
\begin{prop}\label{LWP}
	Let $\eta \in L_x^1(\R)$. Then there exists a positive time $T=T(\norm{\eta}_{L_x^1(\R)})$, such that the equation \eqref{eq:NLS} is locally well-posed in $C([0, T);H_x^{\frac 32-}(\R))$.
\end{prop}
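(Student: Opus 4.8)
The plan is to solve the Duhamel equation
\EQ{
u(t)=e^{it\partial_x^2}u_0+i\int_0^t e^{i(t-\rho)\partial_x^2}(\eta u)(\rho)\,d\rho=:\Phi(u)
}
by an iteration argument on a short interval $I=[0,T]$ with $T=T(\norm{\eta}_{L_x^1})$. Fix $s\in(\tfrac12,\tfrac32)$ (the target being $s=\tfrac32-$) and set $\beta:=s-\tfrac12\in(0,1)$, so the prefactor in the commutator decomposition \eqref{1121} is exactly $D^{s-\beta}=D^{1/2}$. As resolution space I take
\EQ{
\norm{u}_X:=\norm{u}_{L_t^\infty H_x^s(I\times\R)}+\norm{D^\beta P_{\geq1}u}_{L_x^\infty L_t^2(\R\times I)},
}
the local-smoothing component being forced by the estimates \eqref{Smooth1}--\eqref{Smooth3-1}. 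Since $\Phi$ is affine with linear part $Lu:=i\int_0^t e^{i(t-\rho)\partial_x^2}(\eta u)\,d\rho$, it suffices to prove the linear bound $\norm{e^{it\partial_x^2}u_0}_X\lesssim\norm{u_0}_{H_x^s}$ (immediate from Strichartz \eqref{1.2222} and \eqref{Smooth1}, \eqref{Smooth3-1}) together with a bound on $L$ good enough that $\sum_{n\ge0}L^n(e^{it\partial_x^2}u_0)$ converges in $X$ for $T$ small; that series is then the unique solution in $X$, the data-to-solution map is bounded linear hence continuous, and continuity in time follows from the same bounds by a density argument.

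For $Lu$ I split the output frequency. The low part $P_{\leq1}Lu$ is harmless: by Minkowski, the $L_x^2$-unitarity of $e^{it\partial_x^2}$, Bernstein's $\norm{P_{\leq1}f}_{L_x^2}\lesssim\norm{f}_{L_x^1}$ and $H^s\hookrightarrow L^\infty$, one gets $\norm{P_{\leq1}Lu}_X\lesssim T^{1/2}\norm{\eta}_{L_x^1}\norm{u}_X$. For the high part I apply \eqref{1121} with the above $\beta$ (after the identity $e^{i\rho\partial_x^2}v(\rho)=u(\rho)$, $v:=e^{-i\rho\partial_x^2}u$), getting a \emph{diagonal} piece $D^{1/2}P_{\geq1}\int_0^t e^{i(t-\rho)\partial_x^2}(\eta D^\beta u)\,d\rho$ and a \emph{commutator} piece carrying $[D^\beta,\eta]$. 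The diagonal piece is resonant, so oscillation cannot be used; instead the dual smoothing estimate \eqref{Smooth2} bounds its $L_t^\infty L_x^2$-norm by $\norm{\eta D^\beta u}_{L_x^1 L_t^2}\le\norm{\eta}_{L_x^1}\norm{D^\beta u}_{L_x^\infty L_t^2}\lesssim\norm{\eta}_{L_x^1}\norm{u}_X$, and \eqref{Smooth3-1} bounds its local-smoothing norm by $\norm{\eta u}_{L_x^1 L_t^2}\le\norm{\eta}_{L_x^1}\norm{u}_{L_x^\infty L_t^2}\lesssim T^{1/2}\norm{\eta}_{L_x^1}\norm{u}_X$. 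The first of these carries no power of $T$, but it only costs the \emph{other} component of $\norm{u}_X$.

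The crux — and the step I expect to be the main obstacle — is the commutator piece $\int_0^t e^{i(t-\rho)\partial_x^2}\big([D^\beta,\eta]e^{i\rho\partial_x^2}v(\rho)\big)\,d\rho$: since $\eta$ carries no derivatives (it may be a $\delta$), $[D^\beta,\eta]f$ need not lie in $L_x^2$ for fixed $t$, so this term can only be controlled after exploiting time oscillation. In frequency variables the integrand at output frequency $\xi=\xi_1+\xi_2$ is $e^{i\rho(|\xi|^2-|\xi_2|^2)}(|\xi|^\beta-|\xi_2|^\beta)\widehat\eta(\xi_1)\widehat v(\rho,\xi_2)$, which is temporally non-resonant (on $|\xi|^2=|\xi_2|^2$ the symbol $|\xi|^\beta-|\xi_2|^\beta$ also vanishes). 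Integrating by parts in $\rho$ — a normal-form transformation — division by $|\xi|^2-|\xi_2|^2$ is legitimate and by Lemma \ref{lem:I-M} yields the gain
\EQ{
\frac{|\xi|^\beta-|\xi_2|^\beta}{|\xi|^2-|\xi_2|^2}\sim\min\{|\xi|^{\beta-2},|\xi_2|^{\beta-2}\},
}
so the net multiplier decays like $\max\{|\xi|,|\xi_2|\}^{s-2}$; since $s<\tfrac32$ we have $s-2<-\tfrac12$, hence $\langle\,\cdot\,\rangle^{s-2}\in L_\xi^2$, which is exactly where the loss ``$\tfrac32-$'' is forced (consistently with the $\delta$-potential counterexample). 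The normal form leaves boundary terms at $\rho=0,t$ and one further time integral in which $\partial_\rho v=ie^{-i\rho\partial_x^2}(\eta u)$ reinserts a factor $\eta$; each resulting bi/tri-linear expression is estimated with the Coifman--Meyer theorem (Lemma \ref{lem:Coifman-Meyer}), Bernstein estimates (Lemma \ref{lem:Bernstein}), the bound $\norm{\widehat\eta}_{L_\xi^\infty}\le\norm{\eta}_{L_x^1}$, and Strichartz/smoothing bounds for the $v$-factors, the frequency decay both cancelling $D^{1/2}$ and summing the Littlewood--Paley pieces; the surviving time integral supplies a positive power of $T$.

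It remains to absorb the contributions that carry no power of $T$ — the diagonal piece's $L_t^\infty L_x^2$-bound and the $\rho\in\{0,t\}$ boundary terms of the normal form. These are handled by the off-diagonal/triangular structure of the estimates: the diagonal piece only costs $\norm{D^\beta P_{\geq1}\cdot}_{L_x^\infty L_t^2}$, which is itself produced only with a factor $T^{1/2}$; and the boundary terms, because of their gained regularity, cost only a norm of $u$ strictly weaker than $H_x^s$, which is absorbed as $\delta\norm{u}_{H_x^s}+C_\delta\norm{u}_{L^2}$ together with the low-order identity $\tfrac{d}{dt}\norm{u}_{L^2}^2\lesssim\norm{\eta}_{L_x^1}\norm{u}_{H_x^s}^2$ (which carries a $T$-factor after integration). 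Consequently $L$ itself need not contract on $X$, but $L^2$ (equivalently the Neumann series) does for $T=T(\norm{\eta}_{L_x^1})$ small, which yields the desired local well-posedness in $C([0,T);H_x^{\frac32-}(\R))$.
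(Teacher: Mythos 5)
Your overall architecture is the same as the paper's: the commutator splitting \eqref{1121} with $D^{\beta}$, the local smoothing estimates \eqref{Smooth2}--\eqref{Smooth3-1} for the diagonal piece, and a normal form in time for the commutator piece with the gain of Lemma \ref{lem:I-M}. The genuine difference is how you generate smallness: the paper takes $\beta=s-\tfrac12+\tfrac{\varepsilon_0}{2}$ and inserts a large frequency threshold $N_0$, so that every term either carries a power of $T$ or a factor $N_0^{-\theta}$ (see Lemma \ref{p2}), which gives a clean contraction; you take $\beta=s-\tfrac12$, no threshold, and try to close via a triangular matrix iteration plus interpolation. That scheme is not hopeless, but the specific absorption you propose for the no-$T$ boundary terms is the wrong tool in your own framework: the identity $\tfrac{d}{dt}\|u\|_{L^2}^2\lesssim\|\eta\|_{L_x^1}\|u\|_{H^s}^2$ is an a priori estimate for \emph{solutions} (and for real $\eta$), whereas in a Neumann-series argument you need operator bounds on $L$ applied to arbitrary elements of $X$; the fix is simply $\|Lu\|_{L_t^\infty L_x^2}\lesssim T^{3/4}\|\eta\|_{L_x^1}\|u\|_{L_t^\infty H_x^s}$ as in \eqref{266}, but as written this step does not cohere. (Both you and the paper also use $\partial_\rho v=ie^{-i\rho\partial_x^2}(\eta u)$, i.e.\ the equation, inside the iteration; I will not count that against you.)

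The real gap is the trilinear term created when the normal form reinserts $\eta$ through $\partial_\rho v$. You assert it is closed by Coifman--Meyer, Bernstein, $\|\widehat\eta\|_{L^\infty_\xi}\le\|\eta\|_{L^1_x}$ and smoothing, "the surviving time integral supplying a positive power of $T$". A direct estimate of this term fails: bounding the multiplier by $|\xi|^{s-\beta}\min\{|\xi|^{\beta-2},|\xi_2+\xi_3|^{\beta-2}\}$ and integrating the $\eta$-frequency via \eqref{6202} leaves the output weight $|\xi|^{s-1}$ against only $\int|\widehat v(\xi_3)|\,d\xi_3$, and $|\xi|^{s-1}$ is \emph{not} square-integrable at infinity since $s-1>-\tfrac12$; no factor of $T$ repairs a divergent $\xi$-integral. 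This is exactly why the paper splits this term as in \eqref{631}: in the quasi-resonant region $|\xi|\sim|\xi_3|$ the weight is transferred onto $v$ by Littlewood--Paley (the $II_{21}$ estimate, which does carry $T$), while in the region $|\xi|\not\sim|\xi_3|$ the term is again temporally non-resonant and a \emph{second} integration by parts is performed (\eqref{4322}), producing a new boundary term $II_{221}$ with no power of $T$ (only frequency decay $|\xi|^{s-3}$) and a quartic integral $II_{222}$. None of this case analysis or the second normal form appears in your proposal, and your blanket claim about a power of $T$ is false for the second-round boundary terms. In addition, Lemma \ref{lem:Coifman-Meyer} is not applicable to the commutator symbol $|\xi|^{s-\beta}\phi_\beta(\xi,\xi_2)/\phi_2(\xi,\xi_2)$: its $\xi_2$-derivatives behave like $|\xi_2|^{\beta-1}$ near $\xi_2=0$ while $|\xi_1|$ is large, violating the Coifman--Meyer derivative conditions; the paper therefore argues directly on the Fourier side with pointwise multiplier bounds and Cauchy--Schwarz (as in \eqref{4321} and \eqref{272}) rather than invoking that theorem for this piece. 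Until the trilinear term is treated along these lines (or by a genuinely different argument you would need to supply), the proof does not close.
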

	\begin{proof}
In the proofs of the following, we always restrict the variables on $(t,x)\in [0, T)\times\R$. Let $\varepsilon_0$ be a fixed arbitrary small constant, and denote
 \begin{itemize}
 \item $s=\frac 32-\varepsilon_0$,
\item $\beta=1-\frac  {\varepsilon_0 }2$.
\end{itemize}
 Hence, we have $\beta=s-\frac 12+\frac  {\varepsilon_0 }2$.
	We define the auxiliary space $X(I)$ for $I=[0, T)\subset\R^+$ by the following norm,
	\EQ{
		\|u\|_{X(I)}=\|\langle D\rangle^s u\|_{L_t^{\infty}L_x^2(I\times \R)}+\|D^{\beta}u\|_{L_x^{\infty}L_t^2(\R\times I)}.
	}
By Duhamel's formula, we denote the operator $\Phi$ by
		\EQ{
			\Phi (u)=e^{it \partial_x^2}u_0+i\int_{0}^te^{i(t-\rho)\partial_x^2}(\eta u(\rho))d\rho.
		}
		Denote
		\EQ{
			R:=\|u_0\|_{H_x^s(\R)}.
		}
		By Lemma \ref{lem:strichartz} and the smoothing effect \eqref{Smooth1}, there exists a constant $C_0>0$ such that
		\EQn{\label{linear estimate}
			\|e^{it \partial_x^2}u_0\|_{X(I)}\leq C_0 \|u_0\|_{H_x^s}=C_0 R.
		}
		Next, we aim to prove that the operator $\Phi$ is a contraction mapping in the following space
		\EQ{
			B_{R}:=\{u\in C(I; H_x^{s}(\R)):\|u\|_{X(I)}\leq 2C_0R\}.
		}
		For this purpose, we need the following two lemmas.
		\begin{lem}\label{p1}
			Let $I =[0, T)$ and $T<1$. Then there exists a positive constant $C=C(\|\eta \|_{L_x^1})>0$, such that
			\EQ{
				\Big\|D^{\beta}\int_{0}^te^{i(t-\rho)\partial_x^2}(\eta u(\rho))d\rho\Big\|_{L_x^{\infty}L_t^2}
				\leq C  T^{\frac 12}\|u\|_{L_t^{\infty}H_x^s}.
			}
		\end{lem}
	
		\begin{lem}\label{p2}
			Let $I =[0, T)$ and $T<1$. Then there exist positive constants $\theta>0$ and $C=C(\|\eta \|_{L_x^1})>0$, such that for any $N_0\in2^\N$,
			\begin{align*}
				\Big\|\int_{0}^te^{i(t-\rho)\partial_x^2}&(\eta u(\rho))d\rho\Big\|_{L_t^{\infty}H_x^s}
				\leq C \Big(T^{\frac 34}N_0^s+N_0^{-\theta}\Big)\|u\|_{X(I)}.
			\end{align*}
		\end{lem}
		Now, we give the proof of local well-posedness result, assuming that Lemmas \ref{p1} and \ref{p2} hold.
		By Lemma \ref{p1}, we have that for any $u\in B_{R}$,
		\EQn{\label{261}
			\Big\|D^{\beta}\int_{0}^te^{i(t-\rho)\partial_x^2}(\eta u(\rho))d\rho\Big\|_{L_x^{\infty}L_t^2}
			\leq &2CC_0T^{\frac 12}R.
		}
		By Lemma \ref{p2}, we have that for any $u\in B_{R}$,
		\begin{align}\label{263}
			\Big\|\int_{0}^te^{i(t-\rho)\partial_x^2}(\eta u(\rho))d\rho\Big\|_{L_t^{\infty}H_x^s}
			\leq 2C C_0 R \Big(T^{\frac 34}N_0^s+N_0^{-\theta}\Big).
		\end{align}
		First, we take $N_0=N_0(\theta, \|\eta\|_{L_x^1})$ large enough such that
		\EQn{\label{264}
			2C C_0  N_0^{-\theta}\leq \frac 12C_0.
		}
		Then, we take $T=T(N_0)<1$ such that
		\EQn{\label{265}
			2C C_0 \Big(T^{\frac 12}+T^{\frac 34}N_0^s\Big)\leq \frac 12 C_0.
		}
		Collecting the estimates \eqref{linear estimate}-\eqref{265}, we obtain
		\EQ{
			\|\Phi(u)\|_{X(I)}\leq 2C_0R.
		}
		Hence, we have that $\Phi:B_{R}\ra B_{R}$. Therefore, we complete the proof of this proposition by applying the contraction mapping principle.
	\end{proof}

	Next, we give the proof of Lemma \ref{p1}.
	\begin{proof}[\bf{Proof of Lemma \ref{p1}}]
		Applying the high and low frequency decomposition, we have
\begin{align}\label{1210-1}
			\Big\|D^{\beta}\int_{0}^te^{i(t-\rho)\partial_x^2}(\eta u(\rho))d\rho\Big\|_{L_x^{\infty}L_t^2}
			\lesssim& \Big\|D^{\beta}P_{< 1}\int_{0}^te^{i(t-\rho)\partial_x^2}(\eta u(\rho))d\rho\Big\|_{L_x^{\infty}L_t^2}\notag\\
			&+\Big\|D^{\beta}P_{\geq 1}\int_{0}^te^{i(t-\rho)\partial_x^2}(\eta u(\rho))d\rho\Big\|_{L_x^{\infty}L_t^2}.
		\end{align}
		For the first term in \eqref{1210-1}, by the Minkowski and H\"{o}lder inequalities, and Lemma \ref{lem:Bernstein}, we have
		\begin{align}\label{6201}
			\Big\|D^{\beta}P_{<1}\int_{0}^te^{i(t-\rho)\partial_x^2}(\eta u(\rho))d\rho\Big\|_{L_x^{\infty}L_t^2}\notag
			\lesssim & T^{\frac 12}\Big\|D^{\beta}P_{< 1}\int_{0}^te^{i(t-\rho)\partial_x^2}(\eta u(\rho))d\rho\Big\|_{L_{t,x}^{\infty}}\notag\\
			\lesssim &  T^{\frac 12}\Big\|\int_{0}^te^{i(t-\rho)\partial_x^2}(\eta u(\rho))d\rho\Big\|_{L_t^{\infty}L_x^2}.
		\end{align}
		Further, noting that $s>\frac12$, using Strichartz's estimates and the Sobolev inequality, we get
		\begin{align}\label{1210-2}
			\emph{R.H.S}~ \mbox{of}~ \eqref{6201}
			\lesssim &  T^{\frac 12}\|\eta u\|_{L_t^{\frac 43}L_x^1}\notag\\
			\lesssim&  T^{\frac 54}\|\eta \|_{L_x^1}\|u\|_{L_{t,x}^{\infty}}\notag\\
			\lesssim&  T^{\frac 54}\|\eta \|_{L_x^1}\|u\|_{L_t^{\infty}H_x^s}.
		\end{align}
For the second term in \eqref{1210-1}, noting that $\beta<1$, by the smoothing effect \eqref{Smooth3-1}, and the Sobolev inequality, we have
		\begin{align}\label{1210-3}
			\Big\|D^{\beta}P_{\geq 1}\int_{0}^te^{i(t-\rho)\partial_x^2}(\eta u(\rho))d\rho\Big\|_{L_x^{\infty}L_t^2}&\lesssim \|\eta u \|_{L_x^1L_t^2}\notag\\
			&\lesssim T^{\frac 12}\|\eta \|_{L_x^1}\|u\|_{L_{t,x}^{\infty}}\notag\\
			&\lesssim T^{\frac 12}\|\eta \|_{L_x^1}\|u\|_{L_t^{\infty}H_x^s}.
		\end{align}
		By the estimates \eqref{1210-1}-\eqref{1210-3}, and $T^{\frac 54}< T^{\frac 12}$ for $T<1$, we finish the proof of this lemma.
	\end{proof}
	
	Now, we are in the position to give the proof of Lemma \ref{p2}.
	\begin{proof}[\bf{Proof of Lemma \ref{p2}}]
		By Lemma \ref{lem:strichartz} and Sobolev's inequality, we have
		\EQn{\label{266}
			\Big\|\int_{0}^te^{i(t-\rho)\partial_x^2}(\eta u(\rho))d\rho\Big\|_{L_t^{\infty}L_x^2} \lesssim& \|\eta u\|_{L_t^{\frac 43}L_x^1}\lesssim T^{\frac 34}\|\eta \|_{L_x^1}\|u\|_{L_t^{\infty}H_x^s}.
		}
		Next, by high and low frequency decomposition, we have
		\begin{align}\label{271}
			\Big\|D^s\int_{0}^te^{i(t-\rho)\partial_x^2}(\eta u(\rho))d\rho\Big\|_{L_t^{\infty}L_x^2} \lesssim&\Big\|D^sP_{< N_0}\int_{0}^te^{i(t-\rho)\partial_x^2}(\eta u(\rho))d\rho\Big\|_{L_t^{\infty}L_x^2}\notag\\
			&+\Big\|D^sP_{\geq N_0}\int_{0}^te^{i(t-\rho)\partial_x^2}(\eta u(\rho))d\rho\Big\|_{L_t^{\infty}L_x^2},
		\end{align}
		where $N_0 \in 2^{\N}$.
		For this first term in \eqref{271}, noting that $s>0$, by the same way in \eqref{266}, and Lemma \ref{lem:Bernstein}, we conclude that
		\EQn{\label{277}
			\Big\|D^sP_{< N_0}\int_{0}^te^{i(t-\rho)\partial_x^2}(\eta u(\rho))d\rho\Big\|_{L_t^{\infty}L_x^2}\lesssim T^{\frac 34}N_0^s\|\eta \|_{L_x^1}\|u\|_{L_t^{\infty}H_x^s}.
		}
		For the second term in \eqref{271},  we use the following transform
		\EQ{
			v(t):=e^{-it\partial_x^2}u(t).
		}
		Then we have that
		$$
		\|v\|_{H_x^s}=\|u\|_{H_x^s},
		$$
		and
		\EQ{
			v(t)=u_0+i\int_0^te^{-i\rho\partial_x^2}(\eta e^{i\rho\partial_x^2}v(\rho))d\rho.
		}
		The latter implies that
		\begin{align}
			&\partial_tv=ie^{-it\partial_x^2}(\eta e^{it\partial_x^2}v(t))\label{def-partialtv}.
		\end{align}
		Now we use the commutator to write
		\begin{align}\label{1029-1}
			&D^sP_{\geq N_0}\int_{0}^te^{-i\rho\partial_x^2}(\eta u(\rho))d\rho\notag\\
			=&D^{s-\beta}P_{\geq N_0}\Big(\int_0^t  e^{-i\rho\partial_x^2}(\eta D^\beta e^{i\rho\partial_x^2}v(\rho))d\rho
			+\int_0^t  e^{-i\rho\partial_x^2}[D^\beta, \eta] e^{i\rho\partial_x^2}v(\rho)d\rho\Big)\notag\\
			:=&I+II.
		\end{align}
		Hence, for the second term in \eqref{271},  it reduces to
		\begin{align}\label{1210-2017}
			\Big\|D^sP_{\geq N_0}\int_{0}^te^{i(t-\rho)\partial_x^2}(\eta u(\rho))d\rho\Big\|_{L_t^{\infty}L_x^2}
			=&\Big\|D^sP_{\geq N_0}\int_{0}^te^{-i\rho\partial_x^2}(\eta u(\rho))d\rho\Big\|_{L_t^{\infty}L_x^2}\notag\\
			\le & \big\|I\big\|_{L_t^{\infty}L_x^2}+ \big\| II \big\|_{L_t^{\infty}L_x^2}.
		\end{align}
		Next, we estimate the terms  $I$ and $II$ above one by one.
		For $I$, noting $s-\beta-\frac 12=-\frac {\varepsilon_0}{2}<0$, by Lemmas \ref{lem:Bernstein}, \ref{lem:strichartz}, the smoothing effect \eqref{Smooth2}, and H\"{o}lder's inequality, we have
		\begin{align}\label{278}
			\|I\|_{L_t^{\infty}L_x^2}=&\Big\|P_{\geq N_0}D^{s-\beta}\int_0^t  e^{-i\rho\partial_x^2}(\eta D^\beta u(\rho))d\rho\Big\|_{L_t^{\infty}L_x^2}\notag\\
			\lesssim& N_0^{s-\beta-\frac 12}\|\eta D^{\beta}u\|_{L_x^1L_t^2}\notag\\
			\lesssim& N_0^{s-\beta-\frac 12}\|\eta\|_{L_x^1} \|D^{\beta}u\|_{L_x^{\infty}L_t^2}.
		\end{align}
		For $II$, we use the normal form argument.
		%We denote the phase functions $\phi_{\alpha}$ by
		%\EQ{
		%	\phi_{\alpha}(x, y):=|x|^{\alpha}-|y|^{\alpha}.
		%}
		%When $\phi_{2}(x, y)\neq 0$, we claim the following estimate: for any
		%$\beta<2$,
		%\EQn{\label{ph-div}
	%		\frac{\phi_{\beta}(x, y)}{\phi_{2}(x, y)}\sim \mbox{min}\{|x|^{\beta-2}, |y|^{\beta-2}\}.
	%	}
	%	Indeed, when $|x|\gg |y|$ or $|x|\ll |y|$, $$|\phi_{\alpha}(x, y)|\sim \mbox{max}\{|x|^{\alpha},|y|^{\alpha}\},\quad \mbox{for any } \alpha >0.$$ Since $\beta-2<0$,
	%	$$	\frac{\phi_{\beta}(x, y)}{\phi_{2}(x, y)}\sim \mbox{min}\{|x|^{\beta-2}, |y|^{\beta-2}\}.$$
	%	When $|x|\sim |y|$, by the mean value theorem,
	%	we can easily obtain
	%	$$\phi_{\alpha}(x, y)\sim |x|^{\alpha -1}(|x|-|y|)\sim |y|^{\alpha -1}(|x|-|y|),\quad\quad \mbox{for any } \alpha >0,$$and thus
	%	\EQ{
	%		\frac{\phi_{\beta}(x, y)}{\phi_{2}(x, y)}\sim |x|^{\beta-2} \sim |y|^{\beta-2}.
	%	}
	%	This proves \eqref{ph-div}.
		By the Fourier transform, and integration-by-parts, we have
		\begin{align}\label{4321}
			\widehat{II}(\xi)=&\int_0^t\int_{\xi=\xi_1+\xi_2}\chi_{\geq N_0}(\xi)|\xi|^{s-\beta}e^{i\rho\phi_2(\xi, \xi_2)}
			\phi_{\beta}(\xi, \xi_2)\widehat{\eta}(\xi_1)\widehat{v}(\rho,\xi_2)d\xi_1d\rho\notag\\
			=&-i\int_{\xi=\xi_1+\xi_2}\chi_{\geq N_0}(\xi)|\xi|^{s-\beta}e^{i\rho\phi_2(\xi, \xi_2)}\frac{\phi_{\beta}(\xi, \xi_2)}{\phi_{2}(\xi, \xi_2)}\widehat{\eta}(\xi_1)\widehat{v}(\rho,\xi_2)d\xi_1\big|_{\rho=0}^{\rho=t}\notag\\
			&-\int_0^t\int_{\xi=\xi_1+\xi_2+\xi_3}\chi_{\geq N_0}(\xi)|\xi|^{s-\beta}e^{i\rho\phi_{2}(\xi, \xi_3)}
			\frac{\phi_{\beta}(\xi, \xi_2+\xi_3)}{\phi_{2}(\xi, \xi_2+\xi_3)}\widehat{\eta}(\xi_1)\widehat{\eta}(\xi_2)\widehat{v}(\xi_3)d\xi_1d\xi_2d\rho\notag\\
			:=&\widehat{II}_1(\xi)+\widehat{II}_2(\xi).
		\end{align}
		Noting that we ignore the case $\phi_2(\xi, \xi_2)=0$ in \eqref{4321}, since $\phi_{\beta}(\xi, \xi_2)=0$ in this case.
		
		For the term $\widehat{II}_1(\xi)$, by Lemma \ref{lem:I-M}, the Plancherel identity and H\"{o}lder's inequality, we have
		\begin{align}\label{279}
			\|II_1\|_{L_t^{\infty}L_x^2}=&\|\widehat{II}_1(\xi)\|_{L_t^{\infty}L_{\xi}^2}\notag\\
			\lesssim &\sup_t\sup_{h:\|h\|_{L_x^2}\leq 1}\int_{\xi=\xi_1+\xi_2}\chi_{\geq N_0}(\xi)|\xi|^{s-\beta}\mbox{min}\{|\xi|^{\beta-2}, |\xi_2|^{\beta-2}\}|\widehat{\eta}(\xi_1)||\widehat{v}(\xi_2)||\widehat{h}(\xi)|d\xi_1d\xi\notag\\
			\lesssim &\sup_t\sup_{h:\|h\|_{L_x^2}\leq 1}\int\chi_{\geq N_0}(\xi)|\xi|^{s-2}|\widehat{h}(\xi)|\langle\xi_2\rangle^{-s}\langle\xi_2\rangle^{s}|\hat{v}(\xi_2)|d\xi_2d\xi\|\widehat{\eta}\|_{L_{\xi}^{\infty}}\notag\\
			\lesssim & \sup_{h:\|h\|_{L_x^2}\leq 1}\|\chi_{\geq N_0}(\xi)|\xi|^{s-2}\|_{L_{\xi}^2}\|\widehat{h}(\xi)\|_{L_{\xi}^2}\|\langle\xi\rangle^{s}|\hat{v}(\xi)|\|_{L_t^{\infty}L_{\xi}^2}\|\widehat{\eta}\|_{L_{\xi}^{\infty}}\notag\\
			\lesssim & N_0^{s-\frac 32}\|\eta\|_{L_x^1}\|v\|_{L_t^{\infty}H_x^s}.
		\end{align}
		Next, we consider the term $\widehat{II_2}(\xi)$ and claim that
		\begin{align}\label{27622}
			\|II_2\|_{L_t^{\infty}L_x^2}\lesssim  T\|\eta\|_{L_x^{1}}^2\|v\|_{L_t^{\infty}H_x^s}+N_0^{s-\frac 52}\|\eta\|_{L_x^{1}}^2\|u\|_{L_t^{\infty}H_x^s}
			+T\|\eta\|_{L_x^{1}}^3\|u\|_{L_t^{\infty}H_x^s}.
		\end{align}
		First of all, by the high and low frequency decomposition, we have that
		\begin{align}\label{631}
			\widehat{II}_{2}(\xi)&=
			-\int_0^t\int_{\xi=\xi_1+\xi_2+\xi_3}\chi_{\geq N_0}(\xi)|\xi|^{s-\beta}e^{i\rho\phi_{2}(\xi, \xi_3)}
			\frac{\phi_{\beta}(\xi, \xi_2+\xi_3)}{\phi_{2}(\xi, \xi_2+\xi_3)}\widehat{\eta}(\xi_1)\widehat{\eta}(\xi_2)\widehat{v}(\xi_3)d\xi_1d\xi_2d\rho\notag\\
			&=-\int_0^t\int_{\substack{\xi=\xi_1+\xi_2+\xi_3\\ |\xi|\sim |\xi_3|}}\chi_{\geq N_0}(\xi)|\xi|^{s-\beta}e^{i\rho\phi_{2}(\xi, \xi_3)}
			\frac{\phi_{\beta}(\xi, \xi_2+\xi_3)}{\phi_{2}(\xi, \xi_2+\xi_3)}\widehat{\eta}(\xi_1)\widehat{\eta}(\xi_2)\widehat{v}(\xi_3)d\xi_1d\xi_2d\rho\notag\\
			&\quad-\int_0^t\int_{\substack{\xi=\xi_1+\xi_2+\xi_3\\ |\xi|\ll |\xi_3|\mbox{ \footnotesize{or} }|\xi|\gg |\xi_3|}}\chi_{\geq N_0}(\xi)|\xi|^{s-\beta}e^{i\rho\phi_{2}(\xi, \xi_3)}
			\frac{\phi_{\beta}(\xi, \xi_2+\xi_3)}{\phi_{2}(\xi, \xi_2+\xi_3)}\widehat{\eta}(\xi_1)\widehat{\eta}(\xi_2)\widehat{v}(\xi_3)d\xi_1d\xi_2d\rho\notag\\
			&:=\widehat{II}_{21}(\xi)+\widehat{II}_{22}(\xi).
		\end{align}
		For $II_{21}$ in \eqref{631}, setting $\widetilde{\xi}_2=\xi_2+\xi_3$, by Lemma \ref{lem:I-M} and the Littlewood-Paley decomposition, we have
		\begin{align}\label{272}
			\|\widehat{II}_{21}(\xi)\|_{L_t^{\infty}L_{\xi}^2}
			\lesssim &\sup_t\sup_{h:\|h\|_{L_x^2}\leq 1}\int_0^t\int_{\substack{\xi=\xi_1+\xi_2+\xi_3\\ |\xi|\sim |\xi_3|}}|\xi|^{s-\beta}\mbox{min}\{|\xi|^{\beta-2}, |\widetilde{\xi}_2|^{\beta-2}\}\notag\\
			&\quad\quad\cdot|\widehat{\eta}(\xi_1)||\widehat{\eta}(\xi_2)||\widehat{v}(\xi_3)||\widehat{h}(\xi)|d\xi_1d\xi_2d\xi d\rho\notag\\
			\lesssim &\sup_t\sup_{h: \|h\|_{L_x^2}\leq 1}\sum_{j}\sum_{|k-j|\leq 5}\int_0^t\int_{\R^3}|\xi|^{s-\beta}\mbox{min}\{|\xi|^{\beta-2}, |\widetilde{\xi}_2|^{\beta-2}\}\notag\\
			&\quad\quad\cdot|\widehat{P_{2^k}v}(\xi_3)||\widehat{P_{2^j}h}(\xi)|d\widetilde{\xi}_2d\xi_3d\xi d\rho\|\widehat{\eta}\|_{L_{\xi}^{\infty}}^2.
		\end{align}
		Moreover, for any $\gamma<-1$, we have
		\begin{align}\label{6202}
			\int_{\R}\mbox{min}\{|\xi|^{\gamma}, |\widetilde{\xi}_2|^{\gamma}\}d\widetilde{\xi}_2\lesssim |\xi|^{\gamma+1}.
		\end{align}
		Hence, by H\"{o}lder's inequality, Lemma \ref{lem:littlewood-Paley} and  \eqref{6202}, we have
		\begin{align}\label{II21}
			\|\widehat{II}_{21}(\xi)\|_{L_t^{\infty}L_{\xi}^2}
			\lesssim&\sup_t\sup_{h:\|h\|_{L_x^2}\leq 1}\sum_{j}\sum_{|k-j|\leq 5}\int_0^t\int_{\R^2}|\xi|^{s-1}|\widehat{P_{2^k}v}(\xi_3)|\notag\\
			&\quad\quad\cdot|\widehat{P_{2^j}h}(\xi)|d\xi_3d\xi d\rho\>\|\widehat{\eta}\|_{L_{\xi}^{\infty}}^2\notag\\
			\lesssim&\sup_t\sup_{h:\|h\|_{L_x^2}\leq 1}\sum_{j}\sum_{|k-j|\leq 5}T2^{j(s-1)+\frac j2+\frac k2}\|\widehat{P_{2^k}v}\|_{L_{\xi}^2}\|\widehat{P_{2^j}h}\|_{L_{\xi}^2}\|\widehat{\eta}\|_{L_{\xi}^{\infty}}^2\notag\\
			\lesssim&\sup_{h:\|h\|_{L_x^2}\leq 1}T\|\eta\|_{L_x^{1}}^2\|2^{js}\widehat{P_{2^j}v}\|_{L_t^{\infty}l_j^2L_{\xi}^2}\|\widehat{P_{2^j}h}\|_{l_j^2L_{\xi}^2}\notag\\
			\lesssim& T\|\eta\|_{L_x^{1}}^2\|v\|_{L_t^{\infty}H_x^s}.
		\end{align}
		Next, we consider $II_{22}$ in \eqref{631}. Firstly, by integration-by-parts, we have
		\begin{align}\label{4322}
			\widehat{II}_{22}(\xi)=&-\int_0^t\int_{\substack{\xi=\xi_1+\xi_2+\xi_3\\ |\xi|\ll |\xi_3|\mbox{ \footnotesize{or} }|\xi|\gg |\xi_3|}
			}\chi_{\geq N_0}(\xi)
			|\xi|^{s-\beta}e^{i\rho\phi_{2}(\xi, \xi_3)}
			\frac{\phi_{\beta}(\xi, \xi_2+\xi_3)}{\phi_{2}(\xi, \xi_2+\xi_3)}\notag\\
			&\quad\quad\quad\quad\cdot\widehat{\eta}(\xi_1)
			\widehat{\eta}(\xi_2)\widehat{v}(\rho,\xi_3)d\xi_1d\xi_2d\rho\notag\\
			=&i\int_{\substack{\xi=\xi_1+\xi_2+\xi_3\\ |\xi|\ll |\xi_3|\mbox{ \footnotesize{or} }|\xi|\gg |\xi_3|}}\chi_{\geq N_0}(\xi)|\xi|^{s-\beta}e^{i\rho\phi_{2}(\xi, \xi_3)}
			\frac{\phi_{\beta}(\xi, \xi_2+\xi_3)}{\phi_{2}(\xi, \xi_3)\phi_{2}(\xi, \xi_2+\xi_3)}\notag\\
			&\quad\quad\quad\quad\cdot\widehat{\eta}(\xi_1)\widehat{\eta}(\xi_2)\widehat{v}(\rho,\xi_3)d\xi_1d\xi_2\Big|_{\rho=0}^{\rho=t}\notag\\
			&+\int_0^t\int_{\substack{\xi=\xi_1+\xi_2+\xi_3\\ |\xi|\ll |\xi_3+\xi_4|\mbox{ \footnotesize{or} }|\xi|\gg |\xi_3+\xi_4|}}\chi_{\geq N_0}(\xi)|\xi|^{s-\beta}e^{i\rho\phi_{2}(\xi, \xi_4)}
			\frac{\phi_{\beta}(\xi, \xi_2+\xi_3+\xi_4)}{\phi_{2}(\xi, \xi_2+\xi_3+\xi_4)}\notag\\
			&\quad\quad\quad\quad\cdot\frac{1}{\phi_{2}(\xi, \xi_3+\xi_4)}\widehat{\eta}(\xi_1)\widehat{\eta}(\xi_2)\widehat{\eta}(\xi_3)\widehat{v}(\rho,\xi_4)
			d\xi_1d\xi_2d\xi_3d\rho\notag\\
			:=&\widehat{II}_{221}(\xi)+\widehat{II}_{222}(\xi).
		\end{align}
		Under the frequency restriction of $|\xi|\ll |\xi_3|$ or $|\xi|\gg |\xi_3|$, we have
		\begin{align}\label{45671}
			\frac{1}{|\phi_{2}(\xi, \xi_3)|}\thicksim \mbox{min}\{|\xi|^{-2},|\xi_3|^{-2}\} .
		\end{align}
		Hence, for the boundary term $\widehat{II}_{221}(\xi)$ in \eqref{4322},  by \eqref{ph-div}, \eqref{45671}, Lemma \ref{lem:I-M}, and using the variable substitution: $\widetilde{\xi}_2:=\xi_2+\xi_3$, we have
		\begin{align}\label{273}
			\|\widehat{II}_{221}(\xi)\|_{L_t^{\infty}L_{\xi}^2}
			\lesssim&\sup_t\sup_{h:\|h\|_{L_x^2}\leq 1}\int_{\xi=\xi_1+\xi_2+\xi_3}\chi_{\geq N_0}(\xi)|\xi|^{s-\beta-2}\mbox{min}\{|\xi|^{\beta-2},|\xi_2+\xi_3|^{\beta-2}\}\notag\\
			&\quad\quad\cdot|\widehat{\eta}(\xi_1)||\widehat{\eta}(\xi_2)||\widehat{v}(t,\xi_3)||\widehat{h}(\xi)|d\xi_1d\xi_2d\xi\notag\\
			\lesssim&\sup_t\sup_{h:\|h\|_{L_x^2}\leq 1}\int_{\R^3}\chi_{\geq N_0}(\xi)|\xi|^{s-\beta-2}\mbox{min}\{|\xi|^{\beta-2},|\widetilde{\xi}_2|^{\beta-2}\}\notag\\
			&\quad\quad\cdot|\widehat{v}(t,\xi_3)||\widehat{h}(\xi)|d\widetilde{\xi}_2d\xi_3d\xi\> \|\widehat{\eta}\|_{L_{\xi}^{\infty}}^2.
		\end{align}
		Noting that for $s=\frac 32-$, we have the following inequality,
		\begin{align}\label{6305}
			\int_{\R}|\widehat{v(t)}(\xi)|d\xi=&\int_{\R}\langle\xi\rangle^{-s}\langle\xi\rangle^{s}|\widehat{v(t)}(\xi)|d\xi\notag\\
			\lesssim &\|\langle\xi\rangle^{-s}\|_{L_{\xi}^2}\|\langle\xi\rangle^{s}\widehat{v}(\xi)\|_{L_{\xi}^2}\notag\\
			\lesssim &\|v\|_{L_t^{\infty}H_x^s}.
		\end{align}
		Further, noting that $\beta-2<-1$, by H\"{o}lder's inequality, \eqref{6202}, \eqref{273} and \eqref{6305}, we have
		\begin{align}\label{43167}
			\|\widehat{II}_{221}(\xi)\|_{L_t^{\infty}L_{\xi}^2}
			\lesssim&\sup_t\sup_{h:\|h\|_{L_x^2}\leq 1}\int_{\R^2}\chi_{\geq N_0}(\xi)|\xi|^{s-3}|\widehat{v}(\xi_3)||\widehat{h}(\xi)|d\xi_3d\xi\>\|\widehat{\eta}\|_{L_{\xi}^{\infty}}^2\notag\\
			\lesssim&\sup_{h:\|h\|_{L_x^2}\leq 1}\|\chi_{\geq N_0}(\xi)|\xi|^{s-3}\|_{L_{\xi}^2}\|\widehat{h}(\xi)\|_{L_{\xi}^2}\|v\|_{L_t^{\infty}H_x^s}\|\widehat{\eta}\|_{L_{\xi}^{\infty}}^2\notag\\
			\lesssim& N_0^{s-\frac 52}\|\eta\|_{L_x^{1}}^2\|v\|_{L_t^{\infty}H_x^s}.
		\end{align}
		Next, for the integral term $\widehat{II}_{222}(\xi)$  in \eqref{4322}, using the variable substitution: $\widetilde{\xi}_2=\xi_2+\xi_3+\xi_4$ and $\widetilde{\xi}_3=\xi_3+\xi_4$, by \eqref{6202}, \eqref{6305}, and Lemma \ref{lem:I-M}, we obtain
		\begin{align}\label{274}
			\|\widehat{II}_{222}(\xi)\|_{L_t^{\infty}L_{\xi}^2}\lesssim& T\sup_t\sup_{h:\|h\|_{L_x^2}\leq 1} \int_{|\xi|\geq N_0}|\xi|^{s-\beta}\mbox{min}\{|\xi|^{\beta-2},|\widetilde{\xi}_2|^{\beta-2}\}\mbox{min}\{|\xi|^{-2},|\widetilde{\xi}_3|^{-2}\}\notag\\
			&\quad\quad\cdot|\widehat{v}(t,\xi_4)||\widehat{h}(\xi)|d\widetilde{\xi}_2d\widetilde{\xi}_3d\xi_4d\xi \>\|\widehat{\eta}\|_{L_{\xi}^{\infty}}^3\notag\\
			\lesssim& T\sup_t\sup_{h:\|h\|_{L_x^2}\leq 1} \int_{|\xi|\geq N_0}|\xi|^{s-2}|\widehat{v}(\xi_4)||\widehat{h}(\xi)|d\xi_4d\xi\|\eta\|_{L_x^{1}}^3\notag\\
			\lesssim& T\|\eta\|_{L_x^{1}}^3\|v\|_{L_t^{\infty}H_x^s}.
		\end{align}

		Hence, collecting the estimates \eqref{631}, \eqref{II21}, \eqref{4322}, \eqref{43167} and \eqref{274}, we obtain the claim \eqref{27622}.

		Therefore, by the estimates \eqref{266}-\eqref{277}, \eqref{1029-1}, \eqref{278}, \eqref{279}, and \eqref{27622}, we finish the proof of Lemma \ref{p2}.
	\end{proof}
	
\subsection{Global well-posedness in $H_x^{\frac 32-}(\R)$}\label{GWP} We are now in a position to prove the global well-posedness.
	\begin{proof}[\bf{Proof}]
Let $u\in C([0,T^*);H_x^{\frac 32-}(\R))$ be the solution of equation \eqref{eq:NLS} with the maximal lifespan $[0,T^*)$.

Let $0<\epsilon_0<T$, where $T=T( \norm{\eta}_{L_x^{1}})$ is the lifespan obtained in the above subsection. Assume by contradiction that $T^*<+\I$. Using the argument in the proof of the local well-posedness, we conclude that $u\in C([0,T^*-\epsilon_0);H_x^{\frac 32-}(\R))$ and $\norm{u(T^*-\epsilon_0)}_{H_x^{\frac 32-}} \lsm \norm{u_0}_{H_x^{\frac 32-}} $.

Hence, using the argument in the proof of the local well-posedness again, we can further extend solution $u$ beyond $T^*$. To be precise, we obtain that $u\in C([0,T^*-\epsilon_0+T);H_x^{\frac 32-}(\R))$. We see $T^*-\epsilon_0+T>T^*$, this contradicts to the definition of $T^*$. Therefore, this proves that $T^*=+\infty$.
\end{proof}

\subsection{Ill-posedness in $H_x^{s}(\R)$, $s\geq \frac 32$} 	Next, we prove that for any $s\geq \frac 32$, there exists some $\eta\in L_x^1(\R)$, such that the equation \eqref{eq:NLS} is ill-posed in $H_x^{s}(\R)$. The main tool is Lemma \ref{ill-posed}.
	\begin{proof}[\bf{Proof}]
		We only need to show the ill-posedness in $H_x^{\frac 32}$. First of all, let $f=f(x)$ be a time-independent function, and define
\EQ{
			A [f]&=\int_0^t e^{-is\partial_x^2}(\eta e^{is\partial_x^2}f)ds.
		}
To achieve our goal, it is sufficient to show that for given $\eta$, we have that for any $T>0$ and $M_0>0$, there exists $u_0\in \mathcal{S}$, such that
\begin{align}\label{1211-1}
\sup\limits_{t\in [0,T]}\|A[u_0](t)\|_{H_x^{\frac 32}}\geq M_0.
\end{align}
Next, on one hand, we choose the initial data $u_0\in \mathcal{S}$ such that
		$$u_0=P_{\leq 1}u_0, \quad \widehat{u}_0\geq 0, \mbox{ and } \int_{\R} \widehat{u}_0(\xi)\,d\xi>0.$$
		(For example, $u_0(x):=P_{\leq 1}e^{-|x|^2}$ satisfies the conditions above).
		%\EQ{
			%u_0(x):=P_{\leq 1}e^{-|x|^2}.
			%}
		%Then we have $\|u_0\|_{H_x^{\frac 32}}\lesssim 1$.

		On the other hand, we choose the potential
		\EQ{
			\eta=\varepsilon^{-1}\varphi(\frac x {\varepsilon}),
		}
where $\varphi(x)=e^{-|x|^2}$, and $\varepsilon>0$ is a fixed arbitrary small constant.
		Then we have
		\EQ{
			\|\eta\|_{L_x^1}=\|\varphi\|_{L_x^1}\lsm 1, \mbox{ and } \widehat{\eta}(\xi)=e^{-\varepsilon^2|\xi|^2}.
		}
Moreover, let $N_0$ be a large constant determined later, and
		\EQ{
			t:=\frac 1{N_0},
		}
		and the set
		\EQ{
			\Omega:=\bigcup_{k=N_0}^{N_0^2}\Omega_k:=\bigcup_{k=N_0}^{N_0^2}\Big(\sqrt{N_0}\sqrt{2k\pi+\frac{5\pi}{12}}, \sqrt{N_0}\sqrt{2k\pi+\frac{\pi}{2}}\Big),
		}
		where $\Omega_k\cap \Omega_j=\phi$ if $k\neq j$.

		For $A [u_0]$, by the Fourier transform and the choice of $\eta$, we have
		\EQ{
			\widehat{A [u_0]}(\xi)=&\int_0^t \int_{\xi=\xi_1+\xi_2}e^{is(|\xi|^2-|\xi_2|^2)}\widehat{\eta}(\xi_1)\widehat{u_0}(\xi_2)d\xi_2ds\\
			=&\int_0^t \int_{\xi=\xi_1+\xi_2}e^{is(|\xi|^2-|\xi_2|^2)}e^{-\varepsilon^2|\xi_1|^2}\widehat{u_0}(\xi_2)d\xi_2ds.
		}
		Let $|\xi|\geq N_0$ and noting that $|\xi_2|\leq 1$ by the definition of $u_0$, we have
		\EQ{
			\widehat{A [u_0]}(\xi)
			=&\int_{\xi=\xi_1+\xi_2}\frac{e^{it(|\xi|^2-|\xi_2|^2)}-1}{i(|\xi|^2-|\xi_2|^2)}e^{-\varepsilon^2|\xi_1|^2}\widehat{u}_0(\xi_2)d\xi_2.
		}
Take the real part of $\widehat{A [u_0]}(\xi)$, we get
\EQn{\label{reA1010}
		\re	\big(\widehat{A [u_0]}(\xi)\big)
			=\int_{\xi=\xi_1+\xi_2}\frac{\sin[t(|\xi|^2-|\xi_2|^2)]}{|\xi|^2-|\xi_2|^2}e^{-\varepsilon^2|\xi_1|^2}\widehat{u}_0(\xi_2)d\xi_2.
		}
By the mean value theorem, we have
\EQ{
\sin[t(|\xi|^2-|\xi_2|^2)]=\sin (t|\xi|^2)+O(t|\xi_2|^2).
}
Noting that if $\xi \in \Omega$, then  $t|\xi|^2\in (2k\pi+\frac{5\pi}{12}, 2k\pi+\frac{\pi}{2})$, which further implies $\sin (t|\xi|^2)\ge \frac 12$. Moreover, for $N_0$ large enough, we have $t|\xi_2|^2=\frac 1 {N_0^2} \leq \frac 1 4$. Hence, we conclude that
\begin{align}\label{1010}
\sin[t(|\xi|^2-|\xi_2|^2)]\geq \frac 14.
\end{align}	
Moreover, taking sufficiently large $N_0$ such that $N_0^{\frac 32}\gg \frac 1{\varepsilon}$, we get
$$
\varepsilon^2|\xi_1|^2\sim \varepsilon^2|\xi|^2\ll 1.
$$
Hence, this derives the following
\begin{align}\label{e-1010}
e^{-\varepsilon^2|\xi_1|^2}\sim e^{-\varepsilon^2|\xi|^2}\gtrsim 1.
\end{align}
By the estimates \eqref{reA1010}-\eqref{e-1010}, we obtain
\EQn{\label{reA1010-1}
		\re	\big(\widehat{A [u_0]}(\xi)\big)
			\geq &C\int_{\R}\frac{1}{|\xi|^2-|\xi_2|^2}\widehat{u}_0(\xi_2)d\xi_2\\
\geq &C\int_{\R}\frac{1}{|\xi|^2}\widehat{u}_0(\xi_2)d\xi_2.
		}	
	Noting that $\mbox{Re}\big(\widehat{A [u_0]}(\xi)\big)>0$, the above inequality yields that
\begin{align*}
\|A [u_0]\|_{H_x^{\frac 32}(\R)}= \norm{\langle\xi\rangle^{\frac 32}\widehat{A[u_0]}(\xi)}_{L_{\xi}^2(\R)}\geq \norm{\langle\xi\rangle^{\frac 32}\mbox{Re}\big(\widehat{A[u_0]}(\xi)\big)}_{L_{\xi}^2(\R)}.
\end{align*}	
Hence, we conclude that
\begin{align}\label{es-A}
\|A [u_0]\|_{H_x^{\frac 32}(\R)}^2\geq \int_{\R}\big|\langle\xi\rangle^{\frac 32}\mbox{Re}\big(\widehat{A[u_0]}(\xi)\big)\big|^2d\xi
\geq  C_0\int_{\Omega}\frac {1}{|\xi|}d\xi,
		\end{align}
		where $C_0:=(\int \widehat{u}_0(\xi)d\xi)^2>0$.
		
		By $\Omega_k\cap \Omega_j=\phi$ if $k\neq j$, we have
		\EQ{
			\int_{\Omega}\frac {1}{|\xi|}d\xi=&\sum_{k=N_0}^{N_0^2}\int_{\sqrt{N_0}\sqrt{2k\pi+\frac{5\pi}{12}}}^{\sqrt{N_0}\sqrt{2k\pi+\frac{\pi}{2}}}\frac {1}{|\xi|}d\xi\\
			=&\frac 12\sum_{k=N_0}^{N_0^2}\ln\frac {2k\pi+\frac{\pi}{2}}{2k\pi+\frac{5\pi}{12}}.
		}
		Noting that $\ln(1+x)\geq \frac 12x$ when $0\leq x\leq  1$, thus taking $N_0$ large enough, for any $k\geq N_0$, we conclude that
		\EQ{
			\ln\frac {2k\pi+\frac{\pi}{2}}{2k\pi+\frac{5\pi}{12}}\geq\frac {\pi}{24}\cdot \frac {1}{2k\pi+\frac{5\pi}{12}}.
		}
		Further, by the above two estimates, we have
		\begin{align}\label{6311}
			\int_{\Omega}\frac {1}{|\xi|}d\xi\geq & \frac {\pi}{48} \sum_{k=N_0}^{N_0^2} \frac {1}{2k\pi+\frac{5\pi}{12}}\notag\\
			\geq & \frac {\pi}{48}\ln \frac {2N_0^2\pi+\frac{5\pi}{12}}{2N_0\pi+\frac{5\pi}{12}}\notag\\
			\geq & \frac {\pi}{50}\ln N_0.
		\end{align}
		Hence, by the estimates \eqref{es-A} and \eqref{6311}, we obtain \eqref{1211-1}.
		Therefore, the proof of ill-posedness is done by applying Lemma \ref{ill-posed}.
	\end{proof}

\section{The proof of Theorem \ref{theorem 1} with $1<r\leq 2$}\label{local-r}

\subsection{Resonant and non-resonant decomposition}
	
First of all, we introduce the technique of the resonant and non-resonant decomposition based on the normal form method introduced by Shatah \cite{Sha85CPAM}, which shall be used in the proof of global well-posedness when $\eta \in L_x^r(\R)$ for $r>1$.
	
	By Duhamel's formula, the integral equation for \eqref{eq:NLS} is
	\EQn{\label{Duhamel-NLS}
		u(t)=e^{it\partial_x^2}u_0+i\int_{0}^t e^{i(t-\rho)\partial_x^2}(\eta u)(\rho)d\rho.
	}
	Next, we apply the normal form transform to give a suitable resonant and non-resonant decomposition for the integral term in \eqref{Duhamel-NLS}. Firstly, we give the following definition.
	
	\begin{definition}
		\label{defn:normal-form}
		Let $N_0\in 2^\N$ be constant, for any $s\in \R$, denote the multiplier
		\EQ{
			m(\xi_1, \xi_2):=\frac{\langle\xi\rangle^s\langle\xi_1\rangle^{2-s}}{|\xi|^2-|\xi_2|^2}\phi_{\geq N_0}(|\xi|)\phi_{\ll 1}\Big(\frac{|\xi_2|}{|\xi|}\Big), {\mbox{ with }}  \xi=\xi_1+\xi_2.
		}
		Using this notation, we give the following definitions:
		\begin{enumerate}
			\item (Boundary term)
			We define the normal form transform for functions $f, g$ by
			\EQ{
				\mathcal{B}(f, g)(x) := \int_{\xi=\xi_1+\xi_2} e^{ix(\xi_1+\xi_2)} m(\xi_1, \xi_2)  \widehat{f}(\xi_1)\widehat{g}(\xi_2)d\xi_1d\xi_2.
			}
			\item (Resonance term and low frequency term)
			Next, we also define the resonance part and some remainder terms of the nonlinear term by
			\EQ{
				\mathcal{R}(\eta, u):= P_{\leq N_0}(\eta u)+P_{\geq N_0}\sum_{M\gtrsim N}P_N(\eta P_Mu).
			}
		\end{enumerate}
	\end{definition}
	\begin{remark}
	It is easy to check that the multiplier $m$ satisfies the conditions of Coifman-Meyer's multiplier in Lemma \ref{lem:Coifman-Meyer}.
	\end{remark}
	Using the notations in the above definition, we can rewrite $\langle\nabla \rangle^{s} u(t,x)$ in the following form.
	\begin{lem}\label{resoant-decom} Let $u(t,x)$ be defined in \eqref{Duhamel-NLS}, the bilinear operator $\mathcal{B}$ and the function $\mathcal{R}(\eta, u)$ be defined in Definition \ref{defn:normal-form}. Then for any $s\in \R$, we have
		\EQn{\label{Duhamel-NLS-normal}
			\langle\nabla \rangle^{s} u(t,x)=&\langle\nabla \rangle^{s}e^{it\partial_x^2}u_0(x)
			-e^{it\partial_x^2}\mathcal{B}(\langle\nabla \rangle^{-2+s}\eta, u_0(x))\\
&+\mathcal{B}(\langle\nabla \rangle^{-2+s}\eta, u(t,x))\\
			&+i\int_0^te^{i(t-\rho)\partial_x^2}\langle\nabla \rangle^{s}\mathcal{R}(\eta, u(\rho, x))d\rho\\
			&-i\int_0^te^{i(t-\rho)\partial_x^2}\mathcal{B}(\langle\nabla \rangle^{-2+s}\eta, \eta u)(\rho, x)d\rho.
		}
	\end{lem}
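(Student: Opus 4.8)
The plan is to apply $\langle\nabla\rangle^s$ to the Duhamel formula \eqref{Duhamel-NLS}, split the inhomogeneous term into a resonant piece (which will turn out to be exactly $\mathcal{R}(\eta,u)$) and a non-resonant piece, and perform one step of the normal form transform — an integration by parts in time, in the spirit of \cite{Sha85CPAM, Babin-Ilyin-Titi-2011} — on the non-resonant piece. Concretely, I would introduce $v(t):=e^{-it\partial_x^2}u(t)$, which by \eqref{Duhamel-NLS} satisfies $\partial_\rho v = ie^{-i\rho\partial_x^2}(\eta u)$ (cf.\ \eqref{def-partialtv}), and write, on the Fourier side,
\EQ{
\widehat{\langle\nabla\rangle^s\!\int_0^t \!e^{i(t-\rho)\partial_x^2}(\eta u)(\rho)\,d\rho}(\xi)= i\langle\xi\rangle^s e^{-it|\xi|^2}\!\int_0^t\!\!\int_{\xi=\xi_1+\xi_2}\!\!\! e^{i\rho(|\xi|^2-|\xi_2|^2)}\widehat\eta(\xi_1)\widehat v(\rho,\xi_2)\,d\xi_1 d\rho .
}
Inserting the partition $1 = \big(1-\phi_{\geq N_0}(|\xi|)\phi_{\ll 1}(|\xi_2|/|\xi|)\big) + \phi_{\geq N_0}(|\xi|)\phi_{\ll 1}(|\xi_2|/|\xi|)$ under the $\xi_1$-integral, I would check against Definition \ref{defn:normal-form} and the Littlewood--Paley conventions that the first bracket reproduces precisely $i\int_0^t e^{i(t-\rho)\partial_x^2}\langle\nabla\rangle^s\mathcal{R}(\eta,u(\rho))\,d\rho$: the factor $1-\phi_{\geq N_0}(|\xi|)$ gives $P_{\leq N_0}(\eta u)$, and $\phi_{\geq N_0}(|\xi|)\big(1-\phi_{\ll 1}(|\xi_2|/|\xi|)\big)$, which is supported where the frequency $|\xi_2|$ of the $u$-factor is $\gtrsim$ the output frequency $|\xi|\gtrsim N_0$, gives $P_{\geq N_0}\sum_{M\gtrsim N}P_N(\eta P_M u)$.

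On the remaining region $|\xi_2|\ll|\xi|$, so $|\xi_1|\sim|\xi|\gtrsim N_0$ and $\phi_2(\xi,\xi_2)=|\xi|^2-|\xi_2|^2\sim|\xi|^2\neq0$; hence the Coifman--Meyer multiplier $m$ of Definition \ref{defn:normal-form} is well defined there. Using $\frac{d}{d\rho}\big(e^{i\rho\phi_2(\xi,\xi_2)}/(i\phi_2(\xi,\xi_2))\big)=e^{i\rho\phi_2(\xi,\xi_2)}$ and integrating by parts in $\rho$ produces three terms. In the $\rho=t$ boundary term the outer phase collapses, $e^{-it|\xi|^2}e^{it\phi_2(\xi,\xi_2)}\widehat v(t,\xi_2)=\widehat u(t,\xi_2)$, and since $m(\xi_1,\xi_2)\langle\xi_1\rangle^{-2+s}=\langle\xi\rangle^s\phi_{\geq N_0}(|\xi|)\phi_{\ll 1}(|\xi_2|/|\xi|)/\phi_2(\xi,\xi_2)$ it equals $\mathcal{B}(\langle\nabla\rangle^{-2+s}\eta,u(t))$. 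In the $\rho=0$ boundary term the outer phase survives and $\widehat v(0,\xi_2)=\widehat u_0(\xi_2)$, giving $-e^{it\partial_x^2}\mathcal{B}(\langle\nabla\rangle^{-2+s}\eta,u_0)$. In the leftover time integral I would substitute $\partial_\rho\widehat v(\rho,\xi_2)=ie^{i\rho|\xi_2|^2}\widehat{\eta u}(\rho,\xi_2)$; the phases recombine as $e^{i\rho(\phi_2(\xi,\xi_2)+|\xi_2|^2)}=e^{i\rho|\xi|^2}$, and pulling the $\rho$-integral outside turns this term into $-i\int_0^t e^{i(t-\rho)\partial_x^2}\mathcal{B}(\langle\nabla\rangle^{-2+s}\eta,\eta u)(\rho)\,d\rho$. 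Adding back $\langle\nabla\rangle^s e^{it\partial_x^2}u_0$ from \eqref{Duhamel-NLS} and collecting the five contributions would yield \eqref{Duhamel-NLS-normal}.

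I do not expect a deep obstacle here; the derivation is essentially algebraic. The two places that need care are the phase/sign bookkeeping — the outer factor $e^{-it|\xi|^2}$ must cancel in the $\rho=t$ boundary term yet persist as $e^{it\partial_x^2}$ in the $\rho=0$ boundary term and in the cubic remainder — and the exact identification of the complement of $\phi_{\geq N_0}(|\xi|)\phi_{\ll 1}(|\xi_2|/|\xi|)$ in $\eta u$ with $\mathcal{R}(\eta,u)$ as written in Definition \ref{defn:normal-form}. The interchange of the $\rho$- and $\xi$-integrations and the integration by parts in $\rho$ are legitimate because $u$ is a strong solution, so for each fixed $\xi_2$ the map $\rho\mapsto\widehat v(\rho,\xi_2)$ is $C^1$ with the stated derivative; to be fully rigorous I would first establish \eqref{Duhamel-NLS-normal} for Schwartz data, where every manipulation is manifestly valid, and then pass to the limit using the boundedness of $\mathcal{B}$ (Lemma \ref{lem:Coifman-Meyer}) and of $\mathcal{R}$ together with the well-posedness of \eqref{eq:NLS} — or, equivalently, insert a frequency cutoff $P_{\leq K}$, argue as above, and let $K\to\infty$. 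Finally, no restriction on $s$ enters, since the factors $\langle\xi_1\rangle^{2-s}$ in $m$ and $\langle\nabla\rangle^{-2+s}$ in its first slot always cancel to leave $\widehat\eta(\xi_1)$.
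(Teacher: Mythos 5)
Your proposal is correct and follows essentially the same route as the paper: the same resonant/non-resonant frequency splitting (the paper just organizes it as $P_{\leq N_0}+P_{\geq N_0}$ first, then $\phi_{\gtrsim1}$ vs.\ $\phi_{\ll1}$), the same interaction-picture substitution $v=e^{-it\partial_x^2}u$ with $\partial_\rho\widehat v=ie^{i\rho|\xi_2|^2}\widehat{\eta u}$, and the same integration by parts in $\rho$ identifying the two boundary terms and the cubic remainder with the $\mathcal{B}$-terms. The only blemish is a factor-of-$i$ slip in your displayed Fourier formula (the Duhamel $i$ appears on the right but not inside the hat on the left), which your subsequent bookkeeping shows is purely notational.
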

	\begin{proof}
		First of all, using the high-low frequency decomposition, we have
		\EQn{\label{Duhamel-NLS-1}
			\langle\nabla \rangle^{s}u(t)=\langle\nabla \rangle^{s}e^{it\partial_x^2}u_0+i\int_{0}^t \langle\nabla \rangle^{s}e^{i(t-\rho)\partial_x^2}\big(P_{\leq N_0}(\eta u)+P_{\geq N_0}(\eta u)\big)(\rho)d\rho,
		}
		where $N_0\in 2^\N$ is a large enough constant.
		
		Next, we consider the integral term involving $P_{\geq N_0}(\eta u)$. For convenience, we denote
		\EQ{
			I\triangleq \int_{0}^t \langle\nabla \rangle^{s}e^{i(t-\rho)\partial_x^2}P_{\geq N_0}(\eta u)d\rho.
		}
		By the Fourier transform, we have
		\EQn{\label{6115091}
			\widehat{I}(\xi)=&\int_{0}^t \langle\xi \rangle^{s}e^{-i(t-\rho)|\xi|^2}\phi_{\geq N_0}(|\xi|)\widehat{\eta u}(\xi)d\rho\\
			=&\int_{0}^t \int_{\xi=\xi_1+\xi_2}\langle\xi \rangle^{s}e^{-i(t-\rho)|\xi|^2}\phi_{\geq N_0}(|\xi|)\widehat{\eta}(\xi_1)\widehat{u}(\xi_2)d\xi_1d\rho\\
			=&\int_{0}^t \int_{\xi=\xi_1+\xi_2}\langle\xi \rangle^{s}e^{-i(t-\rho)|\xi|^2}\phi_{\geq N_0}(|\xi|)\phi_{\gtrsim 1}\Big(\frac{|\xi_2|}{|\xi|}\Big)\widehat{\eta}(\xi_1)\widehat{u}(\xi_2)d\xi_1d\rho\\
			&+\int_{0}^t \int_{\xi=\xi_1+\xi_2}\langle\xi \rangle^{s}e^{-i(t-\rho)|\xi|^2}\phi_{\geq N_0}(|\xi|)\phi_{\ll 1}\Big(\frac{|\xi_2|}{|\xi|}\Big)\widehat{\eta}(\xi_1)\widehat{u}(\xi_2)d\xi_1d\rho\\
			\triangleq &\widehat{I_1}(\xi)+\widehat{I_2}(\xi).
		}
		%Next, we shall transform the above quantity into the physical space. For the term $\widehat{I_1}(\xi)$,
		%noting that
		%\EQ{
			%\phi_{\gtrsim 1}\Big(\frac{|\xi_2|}{|\xi|}\Big)=&\sum_{N\in2^\N}\phi_{N}(|\xi|)\phi_{\gtrsim 1}\Big(\frac{|\xi_2|}{|\xi|}\Big)\\
			%=&\sum_{N\in2^\N}\phi_{N}(|\xi|)\phi_{\gtrsim 1}\Big(\frac{|\xi_2|}{N}\Big)\\
			%=&\sum_{N\in2^\N}\phi_{N}(|\xi|)\phi_{\gtrsim N}(|\xi_2|)\\
			%=&\sum_{M\gtrsim N}\phi_{N}(|\xi|)\phi_{M}(|\xi_2|),
			%}
		%we have that
		%\EQ{
			%\widehat{I_1}(\xi)=\sum_{M\gtrsim N}\int_{0}^t \int_{\xi=\xi_1+\xi_2}\langle\xi \rangle^{s}e^{-i(t-\rho)|\xi|^2}\phi_{\geq N_0}(|\xi|)\phi_{N}(|\xi|)\phi_{M}(|\xi_2|)\widehat{\eta}(\xi_1)\widehat{u}(\xi_2)d\xi_1d\rho.
			%}
		%Hence, we can obtain
		For $I_1$, we have that
		\EQn{\label{611509}
			I_1=\sum_{M\gtrsim N}\int_{0}^t\langle\nabla \rangle^{s}P_{\geq N_0}e^{i(t-\rho)\partial_x^2}P_N(\eta P_Mu)d\rho.
		}
		Next, for $I_2$. Let $u=e^{it\partial_x^2}v$, then
		\EQ{
			\widehat{I_2}(\xi)=e^{-it|\xi|^2}\int_{0}^t \int_{\xi=\xi_1+\xi_2}\langle\xi \rangle^{s}e^{i\rho(|\xi|^2-|\xi_2|^2)}\phi_{\geq N_0}(|\xi|)\phi_{\ll 1}\Big(\frac{|\xi_2|}{|\xi|}\Big)\widehat{\eta}(\xi_1)\widehat{v}(\xi_2)d\xi_1d\rho.
		}
		Due to this term is non-resonant, we can use the integration-by-parts to treat it. Here, we note that
		\EQ{
			\partial_{\rho}\widehat{v}(\xi_2)=ie^{i\rho|\xi_2|^2}\widehat{\eta u}(\xi_2).
		}
		Hence, we have
		\EQ{
			\widehat{I_2}(\xi)=& \int_{\xi=\xi_1+\xi_2}\langle\xi \rangle^{s}\frac{e^{-it|\xi_2|^2}}{i(|\xi|^2-|\xi_2|^2)}\phi_{\geq N_0}(|\xi|)\phi_{\ll 1}\Big(\frac{|\xi_2|}{|\xi|}\Big)\widehat{\eta}(\xi_1)\widehat{v}(\xi_2)d\xi_1\\
			&-e^{-it|\xi|^2} \int_{\xi=\xi_1+\xi_2}\langle\xi \rangle^{s}\frac{1}{i(|\xi|^2-|\xi_2|^2)}\phi_{\geq N_0}(|\xi|)\phi_{\ll 1}\Big(\frac{|\xi_2|}{|\xi|}\Big)\widehat{\eta}(\xi_1)\widehat{v_0}(\xi_2)d\xi_1\\
			&-\int_0^t\int_{\xi=\xi_1+\xi_2}\langle\xi \rangle^{s}\frac{e^{-i(t-\rho)|\xi|^2}}{|\xi|^2-|\xi_2|^2}\phi_{\geq N_0}(|\xi|)\phi_{\ll 1}\Big(\frac{|\xi_2|}{|\xi|}\Big)\widehat{\eta}(\xi_1)\widehat{\eta u}(\xi_2)d\xi_1d\rho.
		}
		Using the notation of the multiplier $m(\xi_1, \xi_2)$, we can rewrite the above identity further as follows
		\EQ{
			\widehat{I_2}(\xi)=&-i\int_{\xi=\xi_1+\xi_2}m(\xi_1, \xi_2)\langle\xi_1 \rangle^{-2+s}\widehat{\eta}(\xi_1)\widehat{u}(\xi_2)d\xi_1\\
			&+ie^{-it|\xi|^2} \int_{\xi=\xi_1+\xi_2}m(\xi_1, \xi_2)\langle\xi_1 \rangle^{-2+s}\widehat{\eta}(\xi_1)\widehat{u_0}(\xi_2)d\xi_1\\
			&-\int_0^t\int_{\xi=\xi_1+\xi_2}e^{-i(t-\rho)|\xi|^2}m(\xi_1, \xi_2)\langle\xi_1 \rangle^{-2+s}\widehat{\eta}(\xi_1)\widehat{\eta u}(\xi_2)d\xi_1d\rho.
		}
		Using the definition of bilinear operator $\mathcal{B}$ and Fourier inverse transform, we get
		\EQn{\label{611553}
			I_2=&-i\mathcal{B}(\langle\nabla \rangle^{-2+s}\eta, u(t,x))+ie^{it\partial_x^2}\mathcal{B}(\langle\nabla \rangle^{-2+s}\eta, u_0(x))\\
			&-\int_0^te^{i(t-\rho)\partial_x^2}\mathcal{B}(\langle\nabla \rangle^{-2+s}\eta, \eta u)(\rho, x)d\rho.
		}
		Collecting the estimates \eqref{Duhamel-NLS-1}-\eqref{611553}, we finish the proof of this lemma.
		
\end{proof}

\subsection{Global well-posedness in $H_x^{\frac 52-\frac 1r}(\R)$}\label{local-r}
	In this part, we give the proof that if $\eta \in L_x^r(\R)$ for $1<r\leq 2$, then the equation \eqref{eq:NLS} is globally well-posed in $H_x^{\frac52-\frac1r}(\R)$. For the proof of the global well-posedness, the strategy is to apply Lemma \ref{resoant-decom} and give the estimates on \eqref{Duhamel-NLS-normal} term by term.
	Next, we firstly give the necessary estimates to prove the global well-posedness.

	\subsubsection{Boundary terms}
	\begin{lem}[Boundary terms]\label{lem:nonlinear-estimate-boundary-1d}
		Let $1<r\le 2$, $s=\frac 52-\frac 1r$,  and $I \subset\R^+$ be an interval containing $0$. Then, for any $N_0\in2^\N$,
		\EQn{\label{32511-1}
			\normb{e^{it\partial_x^2}\mathcal{B}(\langle\nabla \rangle^{-2+s}\eta, u_0)}_{L_t^{\infty}L_x^2(I\times \R)} \lsm \|P_{\geq N_0}\eta\|_{L_x^{r}}\|u\|_{L_t^{\infty}H_x^{s}},
		}
		and
		\EQn{\label{32512-1}
			\norm{\mathcal{B}(\langle\nabla \rangle^{-2+s}\eta, u(t))}_{L_t^{\infty}L_x^2(I\times \R)} \lsm  \|P_{\geq N_0}\eta\|_{L_x^{r}}\|u\|_{L_t^{\infty}H_x^{s}}.
		}
	\end{lem}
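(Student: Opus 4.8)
The plan is to recognize $\mathcal{B}(\langle\nabla\rangle^{-2+s}\eta,\cdot)$, after absorbing the weight $\langle\nabla\rangle^{-2+s}$ into the symbol, as a genuine order-zero Coifman-Meyer bilinear operator, and then to estimate its two inputs separately in $L_x^2$ and $L_x^\infty$. First I would reduce both \eqref{32511-1} and \eqref{32512-1} to a single fixed-function estimate: since $e^{it\pd_x^2}$ is an isometry on $L_x^2(\R)$ and $0\in I$ (so that $\|u_0\|_{H_x^s}\le\|u\|_{L_t^\infty H_x^s}$ and $\|u(t)\|_{H_x^s}\le\|u\|_{L_t^\infty H_x^s}$), both bounds follow from
\begin{align*}
\norm{\mathcal{B}(\langle\nabla\rangle^{-2+s}\eta,\, w)}_{L_x^2(\R)}\lsm \|P_{\geq N_0}\eta\|_{L_x^{r}}\,\|w\|_{H_x^{s}}\qquad\text{for every }w\in H_x^s(\R).
\end{align*}

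Next I would unwind the definitions. By Definition \ref{defn:normal-form}, $\mathcal{B}(\langle\nabla\rangle^{-2+s}\eta,w)=T_{m'}(\eta,w)$ with symbol $m'(\xi_1,\xi_2)=\langle\xi_1\rangle^{-2+s}m(\xi_1,\xi_2)$, and a direct computation gives the pointwise identity
\begin{align*}
m'(\xi_1,\xi_2)=\frac{\langle\xi\rangle^{s}}{|\xi|^2-|\xi_2|^2}\,\phi_{\geq N_0}(|\xi|)\,\phi_{\ll 1}\!\Big(\tfrac{|\xi_2|}{|\xi|}\Big)=\langle\xi_1\rangle^{-(2-s)}\,m(\xi_1,\xi_2),\qquad \xi=\xi_1+\xi_2,
\end{align*}
so that $\mathcal{B}(\langle\nabla\rangle^{-2+s}\eta,w)=T_m\big(\langle\nabla\rangle^{-(2-s)}\eta,\,w\big)$. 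On the support of $m$ we have $|\xi_2|\ll|\xi|$, hence $|\xi_1|\sim|\xi|\gtrsim N_0$ and $|\xi|^2-|\xi_2|^2\sim|\xi|^2$; in particular $m$ localizes the first input to frequencies $\gtrsim N_0$, so after commuting and inserting a Littlewood-Paley cutoff, $\mathcal{B}(\langle\nabla\rangle^{-2+s}\eta,w)=T_m\big(\langle\nabla\rangle^{-(2-s)}P_{\gtrsim N_0}\eta,\,w\big)$.

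Then I would close the estimate in three standard steps. (i) By the Remark following Definition \ref{defn:normal-form}, $m$ satisfies the Coifman-Meyer hypotheses, so Lemma \ref{lem:Coifman-Meyer} with exponents $(p,p_1,p_2)=(2,2,\infty)$ gives $\|T_m(g,w)\|_{L_x^2}\lsm\|g\|_{L_x^2}\,\|w\|_{L_x^\infty}$. (ii) Since $s=\frac52-\frac1r>\frac12$, the Sobolev embedding $H_x^s(\R)\hookrightarrow L_x^\infty(\R)$ yields $\|w\|_{L_x^\infty}\lsm\|w\|_{H_x^s}$. (iii) With $\sigma:=2-s=\frac1r-\frac12\in[0,\tfrac12)$, the operator $\langle\nabla\rangle^{-\sigma}$ maps $L_x^{r}(\R)$ into $L_x^{2}(\R)$ --- this is the Hardy-Littlewood-Sobolev / Sobolev embedding with $\frac12=\frac1r-\sigma$, valid precisely because $1<r\le2$ --- whence $\|\langle\nabla\rangle^{-(2-s)}P_{\gtrsim N_0}\eta\|_{L_x^2}\lsm\|P_{\gtrsim N_0}\eta\|_{L_x^{r}}$. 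Combining (i)--(iii) yields $\|\mathcal{B}(\langle\nabla\rangle^{-2+s}\eta,w)\|_{L_x^2}\lsm\|P_{\gtrsim N_0}\eta\|_{L_x^r}\|w\|_{H_x^s}$, which is the desired bound (the cutoff $\phi_{\geq N_0}$ in $m$ forces the potential's frequency to lie $\gtrsim N_0$, matching the factor $\|P_{\geq N_0}\eta\|_{L_x^r}$ in the statement up to an immaterial constant in the cutoff).

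The step requiring the most care --- and the one carrying the arithmetic behind the exponent $s=\frac52-\frac1r$ --- is step (iii): converting $L_x^r$ control of the high-frequency part of $\eta$ into $L_x^2$ control after the smoothing operator $\langle\nabla\rangle^{-(2-s)}$. This is exactly where the restriction $1<r\le2$ is essential, since for $r=1$ the required bound $\langle\nabla\rangle^{-1/2}\colon L_x^1\to L_x^2$ fails at the endpoint; that is why the case $r=1$ is handled by the separate argument of Section \ref{pf of theorem 1} and only reaches $H_x^{3/2-}$. The remaining ingredients --- the pointwise symbol identity $m'=\langle\xi_1\rangle^{-(2-s)}m$, the bookkeeping between the cutoffs $\phi_{\geq N_0}$ and $P_{\gtrsim N_0}$, and the verification that $m$ is a Coifman-Meyer symbol --- are routine.
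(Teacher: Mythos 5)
Your proposal is correct and follows essentially the same route as the paper: reduce to a fixed-time bound via the $L^2$-isometry of $e^{it\partial_x^2}$, apply the Coifman--Meyer bound from Lemma \ref{lem:Coifman-Meyer} with exponents $(2,2,\infty)$ placing the (frequency-localized) potential factor in $L_x^2$ and $u$ in $L_x^\infty$ via $H_x^s\hookrightarrow L_x^\infty$ ($s>\tfrac12$), and then use the Sobolev embedding of order $2-s=\tfrac1r-\tfrac12$ to pass from $L_x^2$ back to $\|P_{\geq N_0}\eta\|_{L_x^r}$, exactly as in \eqref{32511-11}. No gaps; the differences (explicitly inserting $P_{\gtrsim N_0}$ on the first input and phrasing the last step as $\langle\nabla\rangle^{-(2-s)}\colon L_x^r\to L_x^2$) are cosmetic.
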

	\begin{proof}
		Using Strichartz's estimates, Lemma \ref{lem:Coifman-Meyer} and Sobolev's inequality, we obtain
		\begin{align}\label{32511-11}
			\normb{&e^{it\partial_x^2}\mathcal{B}(\langle\nabla \rangle^{-2+s}\eta, u_0)}_{L_t^{\infty}L_x^2(I\times \R)}\notag \\
			\lsm& \|P_{\geq N_0} \langle\nabla\rangle^{-2+s}\eta\|_{L_x^{2}}\|u_0\|_{L_x^{\infty}}\notag\\
			\lsm &\|P_{\geq N_0} \langle\nabla\rangle^{-2+s+\frac 1r-\frac 12}\eta\|_{L_x^{r}}\|u\|_{L_{t, x}^{\infty}}\notag\\
			\lsm&\|P_{\geq N_0} \eta\|_{L_x^{r}}\|u\|_{L_t^{\infty}H_x^{s}},
		\end{align}
		where we used the condition $s>\frac 12$ and $-2+s+\frac 1r-\frac 12=s-\frac 52+\frac 1r= 0$. This gives \eqref{32511-1}. For \eqref{32512-1}, in the same way as above, we can get it. Hence, we complete the proof of the lemma.
	\end{proof}
	
	\subsubsection{Resonance term and low frequency term}
	\begin{lem}\label{lem:nonlinear-estimate-resonance-1d}
		Let $1<r\le 2$, $s=\frac 52-\frac 1r$,  and $I=[0, T) \subset\R^+$  be an interval. Then, for any $N_0\in2^\N$,
		\EQ{
			\normbb{\int_{0}^t e^{i(t-\rho)\partial_x^2}\langle\nabla \rangle^{s}\mathcal{R}(\eta, u) d\rho}_{L_t^{\infty}L_x^2(I\times \R)} \lsm T^{\frac 12}N_0^{s+\frac 12}\|\eta\|_{L_x^{r}}\|u\|_{L_t^{\infty}H_x^{s}}.
		}
	\end{lem}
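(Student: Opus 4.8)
The plan is to estimate separately the two constituents of $\mathcal{R}(\eta,u)=P_{\leq N_0}(\eta u)+P_{\geq N_0}\sum_{M\gtrsim N}P_N(\eta P_M u)$: the low/moderate--frequency piece $P_{\leq N_0}(\eta u)$ by a crude Strichartz/energy argument, and the resonant double sum by the retarded local smoothing estimate \eqref{Smooth2}.

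For $P_{\leq N_0}(\eta u)$: since $\langle\nabla\rangle^sP_{\leq N_0}$ maps $L_x^1\to L_x^2$ with cost $N_0^{s+1/2}$ (two applications of Lemma \ref{lem:Bernstein}: $\langle\nabla\rangle^s\lesssim N_0^s$ on frequencies $\lesssim N_0$, and $L_x^1\hookrightarrow L_x^2$ gains $N_0^{1/2}$), the energy estimate together with H\"older $\|\eta u\|_{L_x^1}\le\|\eta\|_{L_x^r}\|u\|_{L_x^{r'}}$, the Sobolev embedding $H_x^s\hookrightarrow L_x^{r'}$ (valid since $s=\tfrac52-\tfrac1r\ge\tfrac1r-\tfrac12$ for $1<r\le2$), and H\"older in time on $[0,T)$ will give
\[
\Big\|\int_0^t e^{i(t-\rho)\partial_x^2}\langle\nabla\rangle^sP_{\leq N_0}(\eta u)\,d\rho\Big\|_{L_t^\infty L_x^2}\lesssim T\,N_0^{s+\frac12}\|\eta\|_{L_x^r}\|u\|_{L_t^\infty H_x^s}.
\]
(Alternatively \eqref{1.222234} with the endpoint admissible pair $(4,\infty)$ trades this for $T^{3/4}N_0^{s}$; for the genuinely low block $P_{\leq1}(\eta u)$ no smoothing is available, so one is confined to such powers of $T$.) As we work on a bounded interval, all of these are dominated by $T^{1/2}N_0^{s+\frac12}\|\eta\|_{L_x^r}\|u\|_{L_t^\infty H_x^s}$.

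The resonant sum is the delicate term. The point to notice is that $\langle\nabla\rangle^s$ applied to it need not be an $L_x^2$ function bounded by $\|u\|_{H_x^s}$ --- in the diagonal range $M\sim N$ one only controls $N^{1/r}\|P_Nu\|_{L_x^2}$ --- so the energy estimate fails and one must use the dispersive smoothing of the propagator. Since the resonant sum is frequency--supported in $|\xi|\gtrsim N_0\ge1$, I would factor $\langle\nabla\rangle^s=D^{1/2}\big(\langle\nabla\rangle^sD^{-1/2}\big)$ and apply \eqref{Smooth2} to obtain
\[
\Big\|\int_0^t e^{i(t-\rho)\partial_x^2}\langle\nabla\rangle^sP_{\geq N_0}\sum_{M\gtrsim N}P_N(\eta P_M u)\,d\rho\Big\|_{L_t^\infty L_x^2}\lesssim\Big\|\langle\nabla\rangle^sD^{-\frac12}P_{\geq N_0}\sum_{M\gtrsim N}P_N(\eta P_M u)\Big\|_{L_x^1 L_t^2},
\]
then pass to $L_t^2L_x^1$ by Minkowski's inequality and, for fixed $t$, bound the $L_x^1$ norm via the triangle inequality over the output dyadic blocks $N$. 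The block multiplier $\langle\nabla\rangle^sD^{-1/2}P_N$ costs $N^{s-1/2}$, while H\"older and Lemma \ref{lem:Bernstein} give $\|P_N(\eta P_M u)\|_{L_x^1}\le\|\eta\|_{L_x^r}\|P_M u\|_{L_x^{r'}}\lesssim\|\eta\|_{L_x^r}M^{\frac2r-3}\|P_M u\|_{H_x^s}$. Summing in $M\gtrsim N$ by Cauchy--Schwarz (legitimate as $\tfrac2r-3<0$) produces $N^{\frac2r-3}\|u\|_{H_x^s}$, and summing in $N\gtrsim N_0$ leaves the geometric--type series $\sum_{N\gtrsim N_0}N^{(s-\frac12)+(\frac2r-3)}=\sum_{N\gtrsim N_0}N^{\frac1r-1}\lesssim N_0^{\frac1r-1}$, which converges \emph{precisely because $r>1$}. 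Using $\|u\|_{L_t^2H_x^s}\le T^{1/2}\|u\|_{L_t^\infty H_x^s}$ and $N_0^{\frac1r-1}\le N_0^{s+\frac12}$ (as $\tfrac1r-1\le s+\tfrac12$ and $N_0\ge1$), this piece will obey $\lesssim T^{1/2}N_0^{s+\frac12}\|\eta\|_{L_x^r}\|u\|_{L_t^\infty H_x^s}$, and adding the first piece finishes the proof.

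The hardest part is the resonant sum: one has to realize $\langle\nabla\rangle^s$ cannot be absorbed by an energy estimate there, identify \eqref{Smooth2} as the right tool, and then run the two nested dyadic summations carefully --- the outer one, over the output frequency, being exactly borderline at $r=1$ and summable only for $r>1$. This is precisely why the endpoint $r=1$ is excluded here and treated separately, with the weaker conclusion $H_x^{3/2-}$, in Section \ref{pf of theorem 1}.
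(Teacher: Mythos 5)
Your overall route is essentially the paper's: you split $\mathcal{R}(\eta,u)$ into the low-frequency piece, estimated crudely by Bernstein/H\"older/Sobolev (giving $TN_0^{s+\frac12}$, exactly as in the paper), and the resonant double sum, estimated by factoring out $D^{\frac12}$ and invoking the retarded smoothing estimate \eqref{Smooth2}, followed by the two dyadic summations. The only organizational difference is that the paper runs the sums through a duality argument with Schur's test (Lemma \ref{lem:Schur}) and the square-function theorem (Lemma \ref{lem:littlewood-Paley}), while you use the triangle inequality in the output frequency $N$ and Cauchy--Schwarz in $M$; your exponent bookkeeping ($M^{\frac2r-3}$ per block, $N^{\frac1r-1}$ after the $M$-sum, convergence of the $N$-sum precisely for $r>1$) is correct and matches the paper's numerology.

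There is, however, one step that fails as written: the passage from $\norm{\cdot}_{L_x^1L_t^2}$ to $\norm{\cdot}_{L_t^2L_x^1}$ ``by Minkowski''. Minkowski's inequality moves the norm with the \emph{larger} exponent inside, i.e.\ it gives $\norm{F}_{L_t^2L_x^1}\le\norm{F}_{L_x^1L_t^2}$, so the inequality you need, $\norm{F}_{L_x^1L_t^2}\lsm\norm{F}_{L_t^2L_x^1}$, goes in the false direction. The repair is to stay in $L_x^1L_t^2$: the triangle inequality over the output blocks $N$ and the multiplier bounds for $\langle\nabla\rangle^{s}D^{-\frac12}P_N$ and $P_N$ are valid there, since these operators act in $x$ only (vector-valued Young's inequality); then apply H\"older in $x$ inside the mixed norm, $\norm{\eta\, P_Mu}_{L_x^1L_t^2}\le\norm{\eta}_{L_x^r}\norm{P_Mu}_{L_x^{r'}L_t^2}$, and only after $\eta$ has been removed use Minkowski in the legitimate direction $\norm{P_Mu}_{L_x^{r'}L_t^2}\le\norm{P_Mu}_{L_t^2L_x^{r'}}$, which holds because $r'=\frac{r}{r-1}\ge 2$. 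This is exactly the order of operations in the paper's estimates \eqref{32514-1}--\eqref{32515-1}; with that reordering your Bernstein and Cauchy--Schwarz summations go through verbatim and give $T^{\frac12}N_0^{\frac1r-1}$ for the resonant piece. (A minor shared point: dominating the low-frequency bound $TN_0^{s+\frac12}$ by $T^{\frac12}N_0^{s+\frac12}$ uses $T\le1$; the paper's proof makes the same implicit assumption and the lemma is only applied with small $T$, so this is harmless.)
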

	\begin{proof}
		Recalling that
		$
		\mathcal{R}(\eta, u):= P_{\leq N_0}(\eta u)+P_{\geq N_0}\sum_{M\gtrsim N}P_N(\eta P_Mu).
		$
		Then we have
		\EQnnsub{
			\Big\|\int_{0}^t& e^{i(t-\rho)\partial_x^2}\langle\nabla \rangle^{s}\mathcal{R}(\eta, u) d\rho\Big\|_{L_t^{\infty}L_x^2}\notag\\
			\lsm & \Big\|\int_{0}^t e^{i(t-\rho)\partial_x^2}\langle\nabla \rangle^{s} P_{\leq N_0}(\eta u)d\rho\Big\|_{L_t^{\infty}L_x^2}\label{63002}\\
			&+\Big\|\int_{0}^t \sum_{M\gtrsim N}e^{i(t-\rho)\partial_x^2}\langle\nabla \rangle^{s} P_{\geq N_0}P_N(\eta P_Mu)d\rho\Big\|_{L_t^{\infty}L_x^2}\label{63003}.
		}
		For the term \eqref{63002}, using the Strichartz estimates and Bernstein estimates,
		\begin{align}\label{63001}
			\eqref{63002}\lsm&\|\langle\nabla \rangle^{s} P_{\leq N_0}(\eta u)\|_{L_t^{1}L_x^2}\notag\\
			\lsm & N_0^{s+\frac 12}\|\eta u\|_{L_{t, x}^{1}}\notag\\
			\lsm & TN_0^{s+\frac 12}\|\eta\|_{L_x^r}\|u\|_{L_{t}^{\infty}H_x^{s}}.
		\end{align}
		For the term \eqref{63003}, by the duality formula, Strichartz's estimates, smoothing effect \eqref{Smooth2} and  Lemma \ref{lem:Schur}, we have
		\begin{align}\label{32514-1}
			\eqref{63003}
			\lsm& \sup_{\norm{h}_{L_x^2\leq 1}}\sum _{N\lsm M}\normbb{\Big\langle\int_{0}^t\langle\nabla \rangle^{s} e^{i(t-\rho)\Delta}P_N(\eta P_Mu )d\rho, h\Big\rangle}_{L_t^{\infty}}\notag\\
			\lsm &\sup_{\norm{h}_{L_x^2\leq 1}}\sum _{N\lsm M}\frac{\langle N \rangle^{s-\frac 12}}{\langle M \rangle^{s-\frac 12}}\normbb{\int_{0}^t e^{i(t-\rho)\Delta}\langle\nabla \rangle^{\frac 12}P_N(\eta \langle M \rangle^{s-\frac 12}P_Mu )d\rho}_{L_t^{\infty}L_x^2}\|P_N h\|_{L_x^2}\notag\\
			\lsm &\sup_{\norm{h}_{L_x^2\leq 1}}\sum _{N\lsm M}\frac{\langle N \rangle^{s-\frac 12}}{\langle M \rangle^{s-\frac 12}} \norm{\eta \langle M \rangle^{s-\frac 12}P_Mu}_{L_x^1 L_t^2}\|P_N h\|_{L_x^2}\notag\\
			\lsm &\norm{\eta \langle M \rangle^{s-\frac 12}P_Mu}_{l_M^2L_x^1L_t^2}.
		\end{align}
		Denote $r'=\frac r{r-1}\in [2, +\infty)$, by the H\"{o}lder, Minkowski and Sobolev inequalities, and  Lemma \ref{lem:littlewood-Paley}, we get
		\begin{align}\label{32515-1}
			\norm{\eta \langle M \rangle^{s-\frac 12}P_Mu}_{l_M^2L_x^1L_t^2}\lsm& \|\eta\|_{L_x^r}\|\langle M \rangle^{s-\frac 12}P_Mu\|_{L_x^{r'}L_t^2l_M^2}\notag\\
			\lsm& \|\eta\|_{L_x^r}\|\langle M \rangle^{s-\frac 12}P_Mu\|_{L_t^2 L_x^{r'}l_M^2}\notag\\
			\lsm & \|\eta\|_{L_x^r}\|u\|_{L_t^2F_{r'}^{s-\frac 12, 2}}\notag\\
			\lsm &T^{\frac 12}\|\eta\|_{L_x^r}\|\langle \nabla \rangle^{s-\frac 12}u\|_{L_t^{\infty}L_x^{r'}}\notag\\
			\lsm &T^{\frac 12}\|\eta\|_{L_x^r}\|\langle \nabla \rangle^{s}u\|_{L_t^{\infty}L_x^{2}}.
		\end{align}
		Further, by the above two estimates,
		\begin{align}\label{63006}
			\eqref{63003}\lsm T^{\frac 12}\|\eta\|_{L_x^r}\|\langle \nabla \rangle^{s}u\|_{L_t^{\infty}L_x^{2}}.
		\end{align}
		Collecting the estimates \eqref{63001} and \eqref{63006}, we finish the proof of this lemma.
	\end{proof}
	\subsubsection{High-order terms}
	\begin{lem}[High-order terms]\label{lem:nonlinear-estimate-higher-order-1d}
		Let $1<r\le 2$, $s=\frac 52-\frac 1r$,  and $I=[0, T) \subset\R^+$ be an interval with $T<1$. Then
		\EQ{
			\normbb{\int_0^te^{i(t-\rho)\partial_x^2}\mathcal{B}(\langle\nabla \rangle^{-2+s}\eta, \eta u)d\rho}_{L_t^{\infty}L_x^2(I\times \R)} \lsm& T^{\frac 12}\|\eta\|_{L_x^r}^2\|u\|_{L_t^{\infty}H_x^{s}}.
		}
	\end{lem}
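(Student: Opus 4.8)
The plan is to dispatch the Duhamel-type time integral with the local smoothing effect \eqref{Smooth2}, which costs the half-derivative $D^{1/2}$ but in return only demands an $L_x^1$ (rather than $L_x^2$) norm on the integrand; this trade is exactly what makes the estimate close. Since the symbol $m$ carries the cutoff $\phi_{\ge N_0}(|\xi|)$ with $N_0\ge 1$, the output of $\mathcal B(\langle\nabla\rangle^{-2+s}\eta,\eta u)$ is frequency-localized to $|\xi|\gtrsim N_0$, so applying $D^{-1/2}$ to it is harmless; writing $\mathcal B(\langle\nabla\rangle^{-2+s}\eta,\eta u)=D^{1/2}\bigl(D^{-1/2}\mathcal B(\langle\nabla\rangle^{-2+s}\eta,\eta u)\bigr)$ and invoking \eqref{Smooth2} gives
\[
\Bigl\|\int_0^te^{i(t-\rho)\partial_x^2}\mathcal B(\langle\nabla\rangle^{-2+s}\eta,\eta u)\,d\rho\Bigr\|_{L_t^\infty L_x^2(I\times\R)}\lesssim \bigl\|D^{-1/2}\mathcal B(\langle\nabla\rangle^{-2+s}\eta,\eta u)\bigr\|_{L_x^1L_t^2(\R\times I)}.
\]

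Next I would pin down the structure of the operator $D^{-1/2}\mathcal B(\langle\nabla\rangle^{-2+s}\eta,\cdot)$. Regarded as a bilinear Fourier multiplier acting on $\bigl(\widehat\eta(\xi_1),\widehat g(\xi_2)\bigr)$, its symbol is $|\xi|^{-1/2}\langle\xi\rangle^{s}\bigl(|\xi|^2-|\xi_2|^2\bigr)^{-1}\phi_{\ge N_0}(|\xi|)\phi_{\ll1}(|\xi_2|/|\xi|)$; on the support one has $|\xi_2|\ll|\xi|$, hence $|\xi_1|\sim|\xi|$ and $|\xi|^2-|\xi_2|^2\sim|\xi|^2$, so, using $s-\tfrac52=-\tfrac1r$, this symbol equals $\langle\xi_1\rangle^{-1/r}\,m''(\xi_1,\xi_2)$ with $m''$ satisfying the hypotheses of Lemma \ref{lem:Coifman-Meyer}. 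Thus $D^{-1/2}\mathcal B(\langle\nabla\rangle^{-2+s}\eta,g)=\mathcal B''(\langle\nabla\rangle^{-1/r}\eta,g)$ for a bounded high$\times$low Coifman--Meyer operator $\mathcal B''$, which I would decompose dyadically as $\mathcal B''(f,g)=\sum_{N\gtrsim N_0}\mathcal B''_N(P_Nf,g_{\ll N})$, each summand having output frequency $\sim N$ and arising from convolution against a kernel on $\R^2$ of $L^1$-norm $O(1)$.

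Then the estimate is finished term by term in $N$. For each block, Young's inequality applied to the kernel of $\mathcal B''_N$ (with Minkowski's inequality for the $L_t^2$-norm) and then Bernstein's inequality (first $\|P_N\langle\nabla\rangle^{-1/r}\eta\|_{L_x^{r'}}\sim N^{-1/r}\|P_N\eta\|_{L_x^{r'}}$, then $\|P_N\eta\|_{L_x^{r'}}\lesssim N^{1/r-1/r'}\|\eta\|_{L_x^{r}}$, so altogether $N^{-1/r'}=N^{1/r-1}$, with $r'=\tfrac r{r-1}$) give
\[
\bigl\|\mathcal B''_N\bigl(P_N\langle\nabla\rangle^{-1/r}\eta,\,(\eta u)_{\ll N}\bigr)\bigr\|_{L_x^1L_t^2}\lesssim \bigl\|P_N\langle\nabla\rangle^{-1/r}\eta\bigr\|_{L_x^{r'}}\,\bigl\|(\eta u)_{\ll N}\bigr\|_{L_x^{r}L_t^2}\lesssim N^{\frac1r-1}\,\|\eta\|_{L_x^{r}}\,\|\eta u\|_{L_x^{r}L_t^2}.
\]
Since $s>\tfrac12$, the embedding $H_x^s\hookrightarrow L_x^\infty$ yields $\|\eta u\|_{L_x^{r}L_t^2}\le\|\eta\|_{L_x^{r}}\|u\|_{L_x^\infty L_t^2}\le T^{1/2}\|\eta\|_{L_x^{r}}\|u\|_{L_t^\infty L_x^\infty}\lesssim T^{1/2}\|\eta\|_{L_x^{r}}\|u\|_{L_t^\infty H_x^{s}}$, and summing $\sum_{N\gtrsim N_0}N^{1/r-1}\lesssim 1$ (this is where $r>1$ enters) produces the asserted bound $T^{1/2}\|\eta\|_{L_x^r}^2\|u\|_{L_t^\infty H_x^s}$.

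The main obstacle, and the reason the normal form alone does not suffice here, is that $\eta$ is honestly rough ($L_x^r$ with $r\le2$) and appears twice, so no derivative may fall on it; the net smoothing $\langle\nabla\rangle^{-2+s}$ produced by the boundary integration by parts falls short by $\tfrac1r$ derivatives, and a direct Strichartz-plus-Coifman--Meyer estimate diverges (the exponent $s=\tfrac52-\tfrac1r$ is calibrated so that the \emph{resonant} term, not this one, is borderline). Harvesting the extra half-derivative from the smoothing effect \eqref{Smooth2} both upgrades the order to $\langle\nabla\rangle^{-1/r}$ and, crucially, forces the integrand into $L_x^1$, so the two rough $\eta$-factors can be paired via $L_x^{r'}\times L_x^r\hookrightarrow L_x^1$; the leftover dyadic sum converges precisely because $r>1$, consistent with the $r=1$ endpoint being genuinely lost. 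A secondary point requiring care is verifying that, after $\langle\nabla\rangle^{-1/r}$ is extracted onto $\eta$, the residual symbol $m''$ is a bona fide bounded Coifman--Meyer symbol (this uses the comparability $|\xi_1|\sim|\xi|$ valid on $\supp m$) and that the mixed norm $L_x^1L_t^2$ is handled correctly — since the stated Coifman--Meyer lemma does not reach the $L^1$ endpoint, I would argue through the explicit kernel/Young bound rather than cite it directly.
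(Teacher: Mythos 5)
Your argument is correct, and after the common opening move it takes a genuinely different route from the paper's proof. Both begin by paying the half derivative through the smoothing estimate \eqref{Smooth2} (the paper's \eqref{25-1201}), which is legitimate because the multiplier $m$ carries $\phi_{\geq N_0}$, so $D^{-1/2}$ acts harmlessly on the output. The paper, however, does not stay at the $L_x^1L_t^2$ endpoint: since Lemma \ref{lem:Coifman-Meyer} requires $p>1$, it interpolates the smoothing bound with a Strichartz bound in $L_{x,t}^{6/5}$ (see \eqref{25-1202}--\eqref{25-1203}) to land in $L_x^{r_0}L_t^{q_0}$ with $r_0$ slightly above $1$, and then applies Coifman--Meyer plus a Sobolev embedding that absorbs the leftover negative derivatives into $\|\eta\|_{L_x^r}$, at the price of the $\epsilon_0,r_0,q_0,r_1$ bookkeeping. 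You instead keep the full gain, factor the resulting symbol as $\langle\xi_1\rangle^{-1/r}$ times a bounded Coifman--Meyer-type symbol (using $|\xi_1|\sim|\xi|$ and $|\xi|^2-|\xi_2|^2\sim|\xi|^2$ on $\supp m$, together with $s-\tfrac52=-\tfrac1r$), and sidestep the $p=1$ failure of Lemma \ref{lem:Coifman-Meyer} by a dyadic decomposition in $\xi_1$ with a uniform $L^1(\R^2)$ kernel bound for each block, then H\"older $L_x^{r'}\times L_x^r\to L_x^1$, Bernstein, and the convergent sum $\sum_{N\gtrsim N_0}N^{\frac1r-1}$, which is exactly where $r>1$ enters, consistent with the paper. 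Your route is more self-contained and makes the derivative budget transparent (precisely $\langle\nabla\rangle^{-1/r}$ lands on $\eta$, with no slack), at the cost of verifying the kernel/Young bound by hand --- which is standard, since each dyadic symbol is a bump at scale $N$ in both variables with derivative gains $N^{-1}$, so its kernel has $L^1$ norm $O(1)$; the paper's route avoids dyadic summation and reuses the stated multiplier lemma, but needs the interpolation step. Both yield the claimed bound $T^{\frac12}\|\eta\|_{L_x^r}^2\|u\|_{L_t^{\infty}H_x^{s}}$ (the paper via $T^{1/q_0}\le T^{1/2}$ for $T<1$).
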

	\begin{proof}
		By the smoothing effect \eqref{Smooth2}, we have
	\begin{align}\label{25-1201}
			\normbb{\int_0^te^{i(t-\rho)\partial_x^2}\mathcal{B}(\langle\nabla \rangle^{-2+s}\eta, \eta u)d\rho}_{L_t^{\infty}L_x^2(I\times \R)}
			\lsm \norm{\langle\nabla \rangle^{-\frac 12}\mathcal{B}(\langle\nabla \rangle^{-2+s}\eta, \eta u)}_{L_x^1L_t^2}.
		\end{align}
By the Strichartz estimates and the Minkowski inequality, we have
\begin{align}\label{25-1202}
			\normbb{\int_0^te^{i(t-\rho)\partial_x^2}\mathcal{B}(\langle\nabla \rangle^{-2+s}\eta, \eta u)d\rho}_{L_t^{\infty}L_x^2(I\times \R)}
			\lsm \norm{\mathcal{B}(\langle\nabla \rangle^{-2+s}\eta, \eta u)}_{L_{x,t}^{\frac65}}.
		\end{align}
Fixing $\epsilon_0$ satisfying $0<\epsilon_0<\frac 32(1-\frac 1r)$, by \eqref{25-1201}, \eqref{25-1202}, and the interpolation, we have
\begin{align}\label{25-1203}
			\normbb{\int_0^te^{i(t-\rho)\partial_x^2}\mathcal{B}(\langle\nabla \rangle^{-2+s}\eta, \eta u)d\rho}_{L_t^{\infty}L_x^2(I\times \R)}
			\lsm \norm{\langle\nabla \rangle^{-\frac 12(1-\epsilon_0)}\mathcal{B}(\langle\nabla \rangle^{-2+s}\eta, \eta u)}_{L_x^{r_0}L_t^{q_0}},
		\end{align}
where $r_0$ and $q_0$ satisfy $\frac 1{r_0}=1-\frac {\epsilon_0}{6}$ and $\frac 65<q_0<2$.

Next, we take $r_1$ satisfying $\frac 1{r_1}=\frac 1{r_0}-\frac 1r$, then $-\frac 52+s+\frac {\epsilon_0}{2}+\frac 1r-\frac 1{r_1}=\frac 23\epsilon_0-1+\frac 1r<0$.
Hence, by Lemma \ref{lem:Coifman-Meyer}, the Sobolev and Minkowski inequalities, we get
		\begin{align}\label{eq:nonlinear-estimate-higher-order-211}
		\norm{\langle\nabla \rangle^{-\frac 12(1-\epsilon_0)}\mathcal{B}(\langle\nabla \rangle^{-2+s}\eta, \eta u)}_{L_x^{r_0}L_t^{q_0}}\lsm & \|\langle\nabla \rangle^{-\frac 52+s+\frac {\epsilon_0}{2}} P_{\geq N_0}\eta\|_{L_x^{r_1}}\|\eta\|_{L_x^r}\|u\|_{L_x^{\infty}L_t^{q_0}}\notag\\
			\lsm &\|\langle\nabla \rangle^{-\frac 52+s+\frac {\epsilon_0}{2}+\frac 1r-\frac 1{r_1}}P_{\geq N_0}\eta\|_{L_x^r}\|\eta\|_{L_x^r}\|u\|_{L_t^{q_0}L_x^{\infty}}\notag\\
			\lsm &T^{\frac 1{q_0}}\|\eta\|_{L_x^r}^2\|u\|_{L_t^{\infty}H_x^{s}}.
		\end{align}
	Finally, noting that $T^{\frac 1{q_0}}<T^{\frac 1{2}}$ for $T<1$,	this gives the proof of this lemma.
	\end{proof}
		
Based on the above several lemmas, we are now in a position to prove the global well-posedness.

	\begin{proof}[\bf{Proof}]
	Recall that $1<r\le 2$, $s=\frac 52-\frac 1r$ and let $I=[0, T)\subset \R^+$. First of all, by Strichartz's estimate, we have
		\EQn{\label{446}
			\norm{e^{it\pd_x^2} u_0}_{L_t^{\infty}H_x^s(I\times \R)}=\|u_0\|_{H_x^{s}}:=R.
		}
		Fixing $0<\delta\ll 1$, by $\eta \in L_x^r(\R)$ for $1<r\leq 2$, we take $N_0=N_0(\delta)\in 2^{\N}$, such that
		\EQn{\label{447}
			\|P_{\geq N_0} \eta\|_{L_x^{r}}\leq \delta.
		}
		Denote the operator $\Phi$ by the following form,
		\EQ{
			\langle\nabla\rangle^s\Phi(u)=&\langle\nabla \rangle^{s}e^{it\partial_x^2}u_0(x)
			-e^{it\partial_x^2}\mathcal{B}(\langle\nabla \rangle^{-2+s}\eta, u_0(x))\\
&+\mathcal{B}(\langle\nabla \rangle^{-2+s}\eta, u(t,x))\\
			&+i\int_0^te^{i(t-\rho)\partial_x^2}\langle\nabla \rangle^{s}\mathcal{R}(\eta, u(\rho, x))d\rho\\
			&-i\int_0^te^{i(t-\rho)\partial_x^2}\mathcal{B}(\langle\nabla \rangle^{-2+s}\eta, \eta u)(\rho, x)d\rho.
		}
		Taking the working space as
		\EQ{
			B_{R}:=\{u\in C(I; H_x^{s}(\R)):\|u\|_{L_t^{\infty}H_x^s(I\times \R)}\leq 2R\}.
		}
		Next, we aim to prove $\Phi$ is the contraction mapping in $B_{R}$. Hence, we need to collect the estimates of $\langle\nabla\rangle^s\Phi(u)$ in $L_t^{\infty}L_x^2$.

		By Lemma \ref{lem:nonlinear-estimate-boundary-1d},
		\EQn{\label{442}
			\normb{e^{it\partial_x^2}\mathcal{B}(\langle\nabla \rangle^{-2+s}\eta, u_0)}_{L_t^{\infty}L_x^2(I\times \R)}\lsm \delta R,
		}
		and
		\EQn{\label{443}
			\norm{\mathcal{B}(\langle\nabla \rangle^{-2+s}\eta, u(t))}_{L_t^{\infty}L_x^2(I\times \R)}\lsm \delta R.
		}
		By Lemma \ref{lem:nonlinear-estimate-resonance-1d},
		\EQn{\label{444}
			\normbb{\int_{0}^t e^{i(t-\rho)\partial_x^2}\langle\nabla \rangle^{s}\mathcal{R}(\eta, u) d\rho}_{L_t^{\infty}L_x^2(I\times \R)} \lsm& T^{\frac 12}RN_0^{s+\frac 12}\|\eta\|_{L_x^r}.
		}
		By Lemma \ref{lem:nonlinear-estimate-higher-order-1d},
		\EQn{\label{445}
			\normbb{\int_0^te^{i(t-\rho)\partial_x^2}\mathcal{B}(\langle\nabla \rangle^{-2+s}\eta, \eta u)d\rho}_{L_t^{\infty}L_x^2(I\times \R)} \lsm& T^{\frac 12}R\|\eta\|_{L_x^r}^2.
		}
		By the estimates \eqref{442}-\eqref{445} and \eqref{446}, for any $u\in B_{R}$, there exists a constant $C=C(\|\eta\|_{L_x^r})$, such that
		\begin{align}\label{448}
			\norm{\Phi(u)}_{X(I)}&=\norm{\langle\nabla\rangle^s\Phi(u)}_{L_t^{\infty}L_x^2(I\times \R)}\notag\\
&\leq  R+C\delta R+CT^{\frac 12}RN_0^{s+\frac 12}+CT^{\frac 12}R.
		\end{align}
		First, by \eqref{447}, we take large $N_0=N_0(\delta, \|\eta\|_{L_x^r})$ to obtain small $\delta$, such that
		\EQ{
			C\delta \leq \frac 14.
		}
	Then, we take $T=T(N_0, \|\eta\|_{L_x^r})$ small enough so that
\EQ{
CT^{\frac 12}N_0^{s+\frac 12}+CT^{\frac 12}\leq \frac 12.
}
		Therefore, by the above estimates, we have
		\EQ{
			\norm{\Phi(u)}_{L_t^{\infty}H_x^s(I\times \R)}\leq 2R.
		}
		Hence, we have that $\Phi:B_{R}\rightarrow B_{R}$. Therefore, we complete the proof of local well-posedness by applying contraction mapping principle. Further, we can obtain the global well-posedness by the same way in subsection \ref{GWP}.
	\end{proof}

\subsection{Ill-posedness in $H_x^{s}(\R)$, $s>\frac 52-\frac 1r$}
Finally, we give the proof of the result that for any $s>\frac 52-\frac 1r$, there exists some $\eta \in L_x^r(\R)$ with $1<r\leq 2$, such that the equation \eqref{eq:NLS} is ill-posed in $H_x^{s}(\R)$, which shall finish the proof of Theorem \ref{theorem 1}.

\begin{proof}[\bf{Proof}]
On one hand, we choose the initial data $u_0\in \mathcal{S}$ such that
		$$u_0=P_{\leq 1}u_0, \quad \widehat{u}_0\geq 0, \mbox{ and } \int_{\R} \widehat{u}_0(\xi)\,d\xi>0.$$
		(For example, $u_0(x):=P_{\leq 1}e^{-|x|^2}$).
On the other hand, we choose the spatial potential
\EQ{
\eta(x)=M^{\frac 1r}\mathscr{F}^{-1}\big(\chi_{\frac 12\leq |\cdot|\leq 2}(\xi)\big)(M x),
}
where $M$ is a large constant decided later.
Recall that the function $\chi_{\frac 12\leq |\cdot|\leq 2(\xi)}$ denotes
	$$
	\chi_{\frac 12\leq |\cdot| \leq 2}(\xi) =\left\{ \aligned
	&1, \quad \frac 12\leq |\xi| \leq 2,
	\\
	&0, \quad |\xi|\leq \frac 12-\frac 14 \mbox{ or } |\xi|\geq 2+\frac 14.
	\endaligned
	\right.
	$$
Then we have
\EQ{
\widehat{\eta}(\xi)=M^{-1+\frac 1r}\chi_{\frac 12\leq |\cdot|\leq 2}\big(\frac{\xi}{M}\big).
}
Moreover, noting $\chi_{\frac 12\leq |\cdot|\leq 2}(\xi)$ is a Schwartz function, hence for any $r>1$,
\EQ{
\norm{\eta}_{L_x^r}=\norm{\mathscr{F}^{-1}\big(\chi_{\frac 12\leq |\cdot|\leq 2}(\xi)\big)}_{L_x^r}<\infty.
}
Now, we define
		\EQ{
			B [u_0]&=\int_0^t e^{-is\partial_x^2}(\eta e^{is\partial_x^2}u_0)ds.
		}
Next, we aim to prove that for any $T>0$ and $s>\frac 52-\frac 1r$,
\EQ{
\sup\limits_{t\in [0,T]}\|B[u_0]\|_{H_x^{s}(\R)}\rightarrow \infty, \mbox{ as }M\rightarrow \infty.
}
For our purpose, we set
$$
t\triangleq \frac 1{M^2},
$$
and
$$
\Omega=\{\xi: \sqrt{\frac {\pi}{3}}M\leq|\xi|\leq \sqrt{\frac {\pi}{2}}M\}.
$$
For $B[u_0]$, by the integration-by-parts and the choice of $u_0$ and $\eta$, we have
\begin{align}\label{621656-1}
\widehat{B [u_0]}(\xi)=&M^{-1+\frac 1r}\int_0^t \int_{\xi=\xi_1+\xi_2}e^{is(|\xi|^2-|\xi_2|^2)}\chi_{\frac 12\leq |\cdot|\leq 2}\big(\frac{\xi_1}{M}\big)\widehat{u}_0(\xi_2)d\xi_2ds\notag\\
=&M^{-1+\frac 1r}\int_{\xi=\xi_1+\xi_2}\frac{e^{it(|\xi|^2-|\xi_2|^2)}-1}{i(|\xi|^2-|\xi_2|^2)}\chi_{\frac 12\leq |\cdot|\leq 2}\big(\frac{\xi_1}{M}\big)\widehat{u}_0(\xi_2)d\xi_2.
\end{align}
Hence, taking the real part of $\widehat{B [u_0]}(\xi)$, we have
\begin{align}\label{6121-1}
\mbox{Re}\big(\widehat{B [u_0]}\big)(\xi)=M^{-1+\frac 1r}\int_{\xi=\xi_1+\xi_2}\frac{\sin[t(|\xi|^2-|\xi_2|^2)]}{|\xi|^2-|\xi_2|^2}\chi_{\frac 12\leq |\cdot|\leq 2}\big(\frac{\xi_1}{M}\big)\widehat{u}_0(\xi_2)d\xi_2.
\end{align}
By the mean value theorem, we have
\EQ{
\sin[t(|\xi|^2-|\xi_2|^2)]=\sin (t|\xi|^2)+O(t|\xi_2|^2),
}
where $|O(t|\xi_2|^2)|\leq t|\xi_2|^2 $.
Noting that $t|\xi|^2\in(\frac {\pi}3, \frac {\pi}2)$ for $\xi \in \Omega$, and taking $M$ large enough such that $t|\xi_2|^2\leq M^{-2}\leq \frac 14$, then we can get that
\begin{align}\label{621-1}
\sin[t(|\xi|^2-|\xi_2|^2)]\geq \frac 14.
\end{align}
By the estimates \eqref{6121-1} and \eqref{621-1}, we obtain that for $\xi \in \Omega$,
\begin{align}\label{6122-1}
\mbox{Re}\big(\widehat{B [u_0]}\big)(\xi)\geq \frac {1}{2\pi} M^{-3+\frac 1r}\int_{\xi=\xi_1+\xi_2}\chi_{\frac 12\leq |\cdot|\leq 2}\big(\frac{\xi_1}{M}\big)\widehat{u}_0(\xi_2)d\xi_2>0.
\end{align}
Further, the above estimate yields that
\begin{align*}
\|B [u_0]\|_{H_x^{s}(\R)}\geq& CM^{s-3+\frac 1r}\norm{\int_{\xi=\xi_1+\xi_2}\chi_{\frac 12\leq |\cdot|\leq 2}\big(\frac{\xi_1}{M}\big)\widehat{u}_0(\xi_2)d\xi_2}_{L_{\xi}^2(\Omega)}\\
\geq & CM^{s-3+\frac 1r}M^{\frac 12}\\
=&CM^{s-\frac 52+\frac 1r},
\end{align*}
where $C=C(\int_{\R} \widehat{u}_0(\xi)\,d\xi)>0$ is a finite constant. Hence, for any $T>0$ and $s>\frac 52-\frac 1r$,
\begin{align}\label{Bu0}
\sup\limits_{t\in [0,T]}\|B [u_0]\|_{H_x^{s}(\R)}\rightarrow \infty, \mbox{ as }M\rightarrow \infty.
\end{align}
The proof of ill-posedness is done by applying Lemma \ref{ill-posed}. Hence, we are done proving all the results in theorem \ref{theorem 1}.
\end{proof}

	\section{The proof of Theorem \ref{theorem 3} ($r> 2$)}\label{pf of theorem 3}

\subsection{Global well-posedness in $H_x^{2}(\R)$} We firstly prove that if $\eta \in L_x^r(\R)$ for $r>2$, then the equation \eqref{eq:NLS} is globally well-posed in $H_x^{2}(\R)$.
First of all, we provide some necessary space-time estimates.
For the reader's convenience, let us review the resonant and non-resonant decomposition in  Lemma \ref{resoant-decom} for $s=2$.
	\begin{lem}\label{resoant-decom-2} Let $u(t,x)$ satisfy the following integral equation
\EQn{\label{Duhamel-NLS-2}
		u(t)=e^{it\partial_x^2}u_0+i\int_{0}^t e^{i(t-\rho)\partial_x^2}(\eta u)(\rho)d\rho.
	}
Then we have
		\EQn{\label{Duhamel-NLS-normal-2}
			\langle\nabla \rangle^{2} u(t,x)=&\langle\nabla \rangle^{2}e^{it\partial_x^2}u_0(x)
			-e^{it\partial_x^2}\widetilde{\mathcal{B}}(\eta, u_0(x))+\widetilde{\mathcal{B}}(\eta, u(t,x))\\
			&+i\int_0^te^{i(t-\rho)\partial_x^2}\langle\nabla \rangle^{2}\widetilde{\mathcal{R}}(\eta, u(\rho, x))d\rho\\
			&-i\int_0^te^{i(t-\rho)\partial_x^2}\widetilde{\mathcal{B}}(\eta, \eta u)(\rho, x)d\rho.
		}
Correspondingly, $\widetilde{\mathcal{B}}(f, g)(x)$ and $\widetilde{\mathcal{R}}(\eta, u)$ are defined as follows,
\EQ{
				\widetilde{\mathcal{B}}(f, g)(x) := \int_{\xi=\xi_1+\xi_2} e^{ix(\xi_1+\xi_2)} \tilde{m}(\xi_1, \xi_2)  \widehat{f}(\xi_1)\widehat{g}(\xi_2)d\xi_1d\xi_2;
			}
\EQ{
				\widetilde{\mathcal{R}}(\eta, u):= P_{\leq N_0}(\eta u)+P_{\geq N_0}\sum_{M\gtrsim N}P_N(\eta P_Mu),
			}
where $N_0\in 2^{\N}$ and the multiplier $\tilde{m}(\xi_1, \xi_2)$ is the following
		\EQ{
			\tilde{m}(\xi_1, \xi_2):=\frac{\langle\xi\rangle^2}{|\xi|^2-|\xi_2|^2}\phi_{\geq N_0}(|\xi|)\phi_{\ll 1}\Big(\frac{|\xi_2|}{|\xi|}\Big), {\mbox{ with }}  \xi=\xi_1+\xi_2.
		}
		
	\end{lem}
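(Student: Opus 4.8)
The plan is to read off Lemma~\ref{resoant-decom-2} as the special case $s=2$ of Lemma~\ref{resoant-decom}, so that essentially no new work is needed. First I would note that the weight $\langle\nabla\rangle^{-2+s}$ that multiplies $\eta$ throughout \eqref{Duhamel-NLS-normal} becomes the identity when $s=2$, and that the Coifman--Meyer multiplier $m(\xi_1,\xi_2)$ of Definition~\ref{defn:normal-form}, evaluated at $s=2$, is
\[
m(\xi_1,\xi_2)\big|_{s=2}=\frac{\langle\xi\rangle^2\langle\xi_1\rangle^{0}}{|\xi|^2-|\xi_2|^2}\,\phi_{\geq N_0}(|\xi|)\,\phi_{\ll 1}\!\Big(\frac{|\xi_2|}{|\xi|}\Big)=\tilde m(\xi_1,\xi_2),
\]
with $\tilde m$ as in the statement. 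Hence $\mathcal{B}(\langle\nabla\rangle^{-2+s}\eta,\cdot)=\widetilde{\mathcal{B}}(\eta,\cdot)$ and, since $\mathcal{R}$ does not involve $s$ at all, $\mathcal{R}(\eta,u)=\widetilde{\mathcal{R}}(\eta,u)$. Substituting these two identifications into \eqref{Duhamel-NLS-normal} produces \eqref{Duhamel-NLS-normal-2} line by line, which completes the proof.

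Should a self-contained derivation be wanted instead, I would simply retrace the proof of Lemma~\ref{resoant-decom}: start from \eqref{Duhamel-NLS-2}, apply $\langle\nabla\rangle^2$, and split the Duhamel term via $1=\phi_{\leq N_0}(|\xi|)+\phi_{\geq N_0}(|\xi|)$; on the high-frequency part split further according to $|\xi_2|\gtrsim|\xi|$ versus $|\xi_2|\ll|\xi|$. The low-frequency piece together with the comparable part assemble into $\langle\nabla\rangle^2\widetilde{\mathcal{R}}(\eta,u)$, while on the genuinely non-resonant part one writes $u=e^{it\partial_x^2}v$, uses $\partial_\rho\widehat v(\xi_2)=ie^{i\rho|\xi_2|^2}\widehat{\eta u}(\xi_2)$, and integrates by parts in $\rho$. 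The boundary at $\rho=t$ gives $\widetilde{\mathcal{B}}(\eta,u(t))$, the boundary at $\rho=0$ gives $-e^{it\partial_x^2}\widetilde{\mathcal{B}}(\eta,u_0)$, and the remaining integral gives $-i\int_0^t e^{i(t-\rho)\partial_x^2}\widetilde{\mathcal{B}}(\eta,\eta u)\,d\rho$ --- precisely the right-hand side of \eqref{Duhamel-NLS-normal-2}.

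There is no serious obstacle here; the only point that needs to be checked carefully is that the integration by parts in $\rho$ is legitimate, i.e.\ that the denominator $|\xi|^2-|\xi_2|^2$ in $\tilde m$ never vanishes on the relevant region. This is guaranteed by the cutoffs: on the support of $\phi_{\geq N_0}(|\xi|)\,\phi_{\ll 1}(|\xi_2|/|\xi|)$ one has $\big||\xi|^2-|\xi_2|^2\big|\sim|\xi|^2\gtrsim N_0^2$, so $\tilde m$ is a smooth, bounded symbol and all the frequency-space manipulations are justified (this also re-verifies the remark following Definition~\ref{defn:normal-form} for $\tilde m$). I would record this non-resonance bound explicitly and otherwise transcribe the corresponding steps of the proof of Lemma~\ref{resoant-decom}.
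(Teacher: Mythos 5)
Your proposal is correct and matches the paper's own treatment: the paper introduces Lemma \ref{resoant-decom-2} precisely as the case $s=2$ of Lemma \ref{resoant-decom}, and your identifications $m|_{s=2}=\tilde m$, $\mathcal{B}(\langle\nabla\rangle^{-2+s}\eta,\cdot)|_{s=2}=\widetilde{\mathcal{B}}(\eta,\cdot)$, $\mathcal{R}=\widetilde{\mathcal{R}}$ are exactly what is needed. Your optional self-contained re-derivation also retraces the paper's proof of Lemma \ref{resoant-decom} faithfully, including the non-resonance bound justifying the integration by parts.
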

 Below, we give the estimates for each of the terms in \eqref{Duhamel-NLS-normal-2}.

	\subsubsection{Boundary terms}
	\begin{lem}[Boundary terms]\label{lem:nonlinear-estimate-boundary-1d-2}
		Let $r>2$, and $I \subset\R^+$ be an interval containing $0$. Then, for any $N_0\in2^\N$,
		\EQn{\label{32511-12}
			\normb{e^{it\partial_x^2}\widetilde{\mathcal{B}}(\eta, u_0)}_{L_t^{\infty}L_x^2(I\times \R)} \lsm \|P_{\geq N_0}\eta\|_{L_x^{r}}\|u\|_{L_t^{\infty}H_x^{2}},
		}
		and
		\EQn{\label{32512-12}
			\norm{\widetilde{\mathcal{B}}(\eta, u(t))}_{L_t^{\infty}L_x^2(I\times \R)} \lsm  \|P_{\geq N_0}\eta\|_{L_x^{r}}\|u\|_{L_t^{\infty}H_x^{2}}.
		}
	\end{lem}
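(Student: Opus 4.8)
The plan is to reduce both inequalities to a single bilinear $L_x^2$-estimate for $\widetilde{\mathcal B}$ and then invoke the Coifman--Meyer multiplier theorem (Lemma \ref{lem:Coifman-Meyer}), exactly along the lines of the proof of Lemma \ref{lem:nonlinear-estimate-boundary-1d}. The only new feature at $s=2$ is that the normal form transform producing $\tilde m$ leaves \emph{no} derivative on $\eta$, which is why the right-hand side only involves $\|P_{\geq N_0}\eta\|_{L_x^r}$.

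First I would observe that $\widetilde{\mathcal B}(\eta,u_0)$ is time-independent and $e^{it\partial_x^2}$ is an isometry on $L_x^2$, so $\|e^{it\partial_x^2}\widetilde{\mathcal B}(\eta,u_0)\|_{L_t^\infty L_x^2}=\|\widetilde{\mathcal B}(\eta,u_0)\|_{L_x^2}$; similarly, taking the supremum in $t$ reduces \eqref{32512-12} to bounding $\|\widetilde{\mathcal B}(\eta,u(t))\|_{L_x^2}$ for fixed $t$. In both cases it therefore suffices to prove $\|\widetilde{\mathcal B}(\eta,v)\|_{L_x^2}\lsm\|P_{\geq N_0}\eta\|_{L_x^r}\|v\|_{H_x^2}$ for $v=u_0$ and $v=u(t)$ respectively, after which one bounds $\|u_0\|_{H_x^2}$ and $\|u(t)\|_{H_x^2}$ by $\|u\|_{L_t^\infty H_x^2}$.

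Next I would verify that $\tilde m$ is an admissible Coifman--Meyer symbol. On its support one has $|\xi|\gtrsim N_0$ and $|\xi_2|\le 2^{-4}|\xi|$, hence $|\xi_1|\sim|\xi|\sim\langle\xi\rangle$ and $\big||\xi|^2-|\xi_2|^2\big|\sim|\xi|^2$, so $\tilde m$ is bounded and each $\xi_1$- or $\xi_2$-derivative hitting one of the factors $\phi_{\geq N_0}(|\xi|)$, $\phi_{\ll 1}(|\xi_2|/|\xi|)$ or $\langle\xi\rangle^2/(|\xi|^2-|\xi_2|^2)$ (the last being a symbol of order $0$ in this region) gains a factor $(|\xi_1|+|\xi_2|)^{-1}$; this is precisely the observation recorded in the remark after Definition \ref{defn:normal-form}. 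The same support condition forces the $\widehat\eta$-argument into $|\xi_1|\gtrsim N_0$, so $\eta$ may be replaced by $P_{\geq N_0}\eta$ inside $\widetilde{\mathcal B}$ (up to adjusting the implicit constant).

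Finally I would apply Lemma \ref{lem:Coifman-Meyer} with $p=2$, $p_1=r$ and $\tfrac1{p_2}=\tfrac12-\tfrac1r$ --- which is an admissible triple exactly because $r>2$, so $2<p_2<\infty$ --- to obtain $\|\widetilde{\mathcal B}(\eta,v)\|_{L_x^2}\lsm\|P_{\geq N_0}\eta\|_{L_x^r}\|v\|_{L_x^{p_2}}$, and then use the Sobolev embedding $H_x^{1/r}(\R)\hookrightarrow L_x^{p_2}(\R)$ (equivalently $H_x^1(\R)\hookrightarrow L_x^{p_2}(\R)$) together with $\tfrac1r<2$ to get $\|v\|_{L_x^{p_2}}\lsm\|v\|_{H_x^2}$, which closes the argument. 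I do not expect a genuine obstacle: the entire content is that the low-frequency localization $|\xi_2|\ll|\xi|$ together with the high-frequency cutoff $|\xi|\gtrsim N_0$ keeps the denominator $|\xi|^2-|\xi_2|^2$ comparable to $|\xi|^2$, turning $\tilde m$ into a genuine degree-zero Coifman--Meyer multiplier; checking this symbol estimate is the only place where (routine) care is needed.
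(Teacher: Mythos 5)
Your proposal is correct and follows essentially the same route as the paper: reduce to a fixed-time bilinear bound via the $L_x^2$-isometry/Strichartz, apply Coifman--Meyer with exponents $(2, r, \tfrac{2r}{r-2})$ (admissible precisely because $r>2$), and conclude with the Sobolev embedding $H_x^2(\R)\hookrightarrow L_x^{\frac{2r}{r-2}}(\R)$; the paper's two-line proof is exactly this. Your additional verification that $\tilde m$ is a degree-zero Coifman--Meyer symbol on its support (where $|\xi_2|\ll|\xi|$, $|\xi|\gtrsim N_0$, hence $|\xi_1|\sim|\xi|$ and $\bigl||\xi|^2-|\xi_2|^2\bigr|\sim|\xi|^2$) is just the content of the remark following Definition \ref{defn:normal-form}, which the paper invokes implicitly.
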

	\begin{proof}
		Using Strichartz's estimates, Lemma \ref{lem:Coifman-Meyer} and Sobolev's inequality, we obtain
		\begin{align}\label{32511-112}
			\normb{e^{it\partial_x^2}\widetilde{\mathcal{B}}(\eta, u_0)}_{L_t^{\infty}L_x^2(I\times \R)}
			\lsm &\|P_{\geq N_0} \eta\|_{L_x^{r}}\|u_0\|_{L_x^{\frac{2r}{r-2}}}\notag\\
			\lsm &\|P_{\geq N_0} \eta\|_{L_x^{r}}\|u\|_{L_t^{\infty}H_x^{2}},
		\end{align}
	This gives \eqref{32511-12}. \eqref{32512-12} can be proved by the same way as above. Hence, we complete the proof of this lemma.
	\end{proof}
	
	\subsubsection{Resonance term and low frequency term}
	\begin{lem}\label{lem:nonlinear-estimate-resonance-1d-2}
		Let $r>2$, and $I=[0, T) \subset\R^+$ with $T<1$. Then, for any $N_0\in2^\N$,
		\EQ{
			\normbb{\int_{0}^t e^{i(t-\rho)\partial_x^2}\langle\nabla \rangle^{2}\widetilde{\mathcal{R}}(\eta, u) d\rho}_{L_t^{\infty}L_x^2(I\times \R)} \lsm T^{1-\frac 1{2r}}N_0^{2}\|\eta\|_{L_x^{r}}\|u\|_{L_t^{\infty}H_x^{2}}.
		}
	\end{lem}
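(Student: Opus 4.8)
The plan is to follow the structure of the proof of Lemma~\ref{lem:nonlinear-estimate-resonance-1d}, writing $\widetilde{\mathcal{R}}(\eta,u)=P_{\le N_0}(\eta u)+P_{\ge N_0}\sum_{M\gtrsim N}P_N(\eta P_M u)$ and estimating the low-frequency piece and the resonant piece separately. In both I would invoke the inhomogeneous Strichartz estimate \eqref{1.222234} with the admissible pair $(q,\rho)=(2r,\tfrac{2r}{r-2})$, which is legitimate exactly because $r>2$ (then $\rho'=\tfrac{2r}{r+2}<2<\rho<\infty$, with $\tfrac{1}{\rho'}=\tfrac1r+\tfrac12$), and whose dual time exponent $q'=\tfrac{2r}{2r-1}$ satisfies $\tfrac1{q'}=1-\tfrac1{2r}$; this is the source of the claimed power $T^{1-1/(2r)}$.

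For the low-frequency piece, Strichartz bounds $\big\|\int_0^t e^{i(t-\rho)\pd_x^2}\langle\nabla\rangle^2 P_{\le N_0}(\eta u)\,d\rho\big\|_{L_t^\infty L_x^2}$ by $\|\langle\nabla\rangle^2 P_{\le N_0}(\eta u)\|_{L_t^{q'}L_x^{\rho'}}$. Since $\langle\nabla\rangle^2 P_{\le N_0}$ is a Fourier multiplier supported in $|\xi|\lsm N_0$ of size $\lsm N_0^2$, Young's inequality gives the pointwise-in-$t$ estimate $N_0^2\|\eta u\|_{L_x^{\rho'}}\lsm N_0^2\|\eta\|_{L_x^r}\|u\|_{L_x^2}$ by H\"older (as $\tfrac1{\rho'}=\tfrac1r+\tfrac12$); pulling $T^{1/q'}$ out of the time norm then yields the desired bound $\lsm T^{1-1/(2r)}N_0^2\|\eta\|_{L_x^r}\|u\|_{L_t^\infty H_x^2}$.

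For the resonant piece, set $F:=\langle\nabla\rangle^2 P_{\ge N_0}\sum_{M\gtrsim N}P_N(\eta P_M u)=\sum_{N\gtrsim N_0}F_N$ with $F_N:=\langle\nabla\rangle^2 P_{\ge N_0}P_N\sum_{M\gtrsim N}(\eta P_M u)$ frequency-localized at $|\xi|\sim N$. After Strichartz, $\big\|\int_0^t e^{i(t-\rho)\pd_x^2}F\,d\rho\big\|_{L_t^\infty L_x^2}\lsm\|F\|_{L_t^{q'}L_x^{\rho'}}$, and at each fixed time the Littlewood--Paley square-function estimate followed by Minkowski's inequality (which goes the favourable way since $\rho'<2$) gives $\|F\|_{L_x^{\rho'}}\lsm\big\|\,\|F_N\|_{L_x^{\rho'}}\,\big\|_{l_N^2}$. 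Writing $N^2=(N/M)^2 M^2$, absorbing the uniformly bounded scale-$N$ multiplier $N^{-2}\langle\nabla\rangle^2 P_N$ on $L_x^{\rho'}$ and replacing $M^2 P_M u$ by $\langle\nabla\rangle^2 P_M u$ up to harmless multipliers, H\"older yields $\|F_N\|_{L_x^{\rho'}}\lsm\|\eta\|_{L_x^r}\sum_{M\gtrsim N}(N/M)^2 d_M$ with $d_M:=\|P_M\langle\nabla\rangle^2 u\|_{L_x^2}$; Schur's test (Lemma~\ref{lem:Schur}) then controls the $l_N^2$-norm by $\|\eta\|_{L_x^r}\|d_M\|_{l_M^2}\sim\|\eta\|_{L_x^r}\|u(t)\|_{H_x^2}$. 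Taking $T^{1/q'}$ out of the time norm gives $\|F\|_{L_t^{q'}L_x^{\rho'}}\lsm T^{1-1/(2r)}\|\eta\|_{L_x^r}\|u\|_{L_t^\infty H_x^2}$, which, since $N_0\ge1$, is dominated by the target; combining the two pieces finishes the proof.

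The step I expect to be the main obstacle is the resonant piece. In the range $1<r\le2$ it was handled (see \eqref{32514-1}--\eqref{32515-1}) by combining the smoothing effect \eqref{Smooth2} with the Triebel--Lizorkin structure $L_t^2 F_{r'}^{s-\frac12,2}$, which relies on $r'=\tfrac{r}{r-1}\ge2$; for $r>2$ one has $r'<2$ and that route collapses (there is no embedding of $H_x^2$ into $W_x^{\sigma,r'}$ on $\R$ when $r'<2$). The resolution sketched above forgoes the smoothing effect entirely, uses the genuinely off-endpoint pair $(2r,\tfrac{2r}{r-2})$ with $\rho'<2$ so that the Littlewood--Paley square function and Minkowski's inequality cooperate, and --- crucially --- carries out the $l^2$-in-frequency summation at a fixed time, so that it collapses to $\|u(t)\|_{H_x^2}$ before the (sub-$L^2$) time norm is applied.
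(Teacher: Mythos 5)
Your proposal is correct, and it follows the paper's overall scheme: the same splitting of $\widetilde{\mathcal{R}}(\eta,u)$ into the low-frequency piece and the piece $P_{\geq N_0}\sum_{M\gtrsim N}P_N(\eta P_M u)$, the same admissible pair $(2r,\tfrac{2r}{r-2})$ (dual exponents $L_t^{\frac{2r}{2r-1}}L_x^{\frac{2r}{r+2}}$, the source of $T^{1-\frac 1{2r}}$), H\"older with $\|\eta\|_{L_x^r}$, and Schur's test (Lemma \ref{lem:Schur}) for the $(N,M)$ summation with weight $(N/M)^2$. The execution of the resonant piece differs in organization: the paper dualizes against $h\in L_x^2$, transfers the weight $\langle N\rangle^2/\langle M\rangle^2$, applies the inhomogeneous Strichartz estimate to each frequency-localized block, and only afterwards performs the $l_M^2$ summation outside the space-time norms (see \eqref{32514-122}--\eqref{32515-12}), whereas you apply Strichartz once to the full sum and then, at each fixed time, run the Littlewood--Paley square function bound plus Minkowski (legitimately, since $\rho'=\tfrac{2r}{r+2}<2$) and Schur's test to collapse everything to $\|u(t)\|_{H_x^2}$ before taking the time norm. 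Your fixed-time summation is arguably the cleaner bookkeeping, since it avoids the interchange of $l_M^2$ with $L_t^\infty$ that the paper's displayed chain in \eqref{32515-12} glosses over (there one must H\"older down to $L_t^2$ first, where $l_M^2$ and $L_t^2$ commute, to land on $\|u\|_{L_t^\infty H_x^2}$); your low-frequency estimate gives $T^{1-\frac1{2r}}N_0^2$ rather than the paper's $TN_0^2$ from \eqref{63001-2}, which is weaker in $T$ but still exactly the stated bound. Your diagnosis of why the $1<r\le 2$ argument (smoothing effect \eqref{Smooth2} plus $F_{r'}^{s-\frac12,2}$ with $r'\ge 2$) breaks down for $r>2$ matches the reason the paper switches to the pure Strichartz route here.
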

	\begin{proof}
		Recalling that
		$
		\widetilde{\mathcal{R}}(\eta, u):= P_{\leq N_0}(\eta u)+P_{\geq N_0}\sum_{M\gtrsim N}P_N(\eta P_Mu).
		$
		Then we have
		\EQnnsub{
			\Big\|\int_{0}^t& e^{i(t-\rho)\partial_x^2}\langle\nabla \rangle^{2}\widetilde{\mathcal{R}}(\eta, u) d\rho\Big\|_{L_t^{\infty}L_x^2}\notag\\
			\lsm & \Big\|\int_{0}^t e^{i(t-\rho)\partial_x^2}\langle\nabla \rangle^{2} P_{\leq N_0}(\eta u)d\rho\Big\|_{L_t^{\infty}L_x^2}\label{63002-2}\\
			&+\Big\|\int_{0}^t \sum_{M\gtrsim N}e^{i(t-\rho)\partial_x^2}\langle\nabla \rangle^{2} P_{\geq N_0}P_N(\eta P_Mu)d\rho\Big\|_{L_t^{\infty}L_x^2}\label{63003-2}.
		}
		For the term \eqref{63002-2}, using the Strichartz and Bernstein estimates,
		\begin{align}\label{63001-2}
			\eqref{63002-2}\lsm&\|\langle\nabla \rangle^{2} P_{\leq N_0}(\eta u)\|_{L_t^{1}L_x^2}\notag\\
			\lsm & N_0^{2}\|\eta u\|_{L_t^{1}L_x^2}\notag\\
			\lsm & TN_0^{2}\|\eta\|_{L_x^r}\|u\|_{L_{t}^{\infty}H_x^{2}}.
		\end{align}
		For the term \eqref{63003-2}, by the duality formula, Strichartz's estimates, and  Lemma \ref{lem:Schur}, we have
		\begin{align}\label{32514-122}
			\eqref{63003-2}
			\lsm& \sup_{\norm{h}_{L_x^2\leq 1}}\sum _{N\lsm M}\normbb{\Big\langle\int_{0}^t\langle\nabla \rangle^{2} e^{i(t-\rho)\partial_x^2}P_N(\eta P_Mu )d\rho, h\Big\rangle}_{L_t^{\infty}}\notag\\
			\lsm &\sup_{\norm{h}_{L_x^2\leq 1}}\sum _{N\lsm M}\frac{\langle N \rangle^{2}}{\langle M \rangle^{2}}\normbb{\int_{0}^t e^{i(t-\rho)\partial_x^2}P_N(\eta \langle M \rangle^{2}P_Mu )d\rho}_{L_t^{\infty}L_x^2}\|P_N h\|_{L_x^2}\notag\\
			\lsm &\sup_{\norm{h}_{L_x^2\leq 1}}\sum _{N\lsm M}\frac{\langle N \rangle^{2}}{\langle M \rangle^{2}} \norm{\eta \langle M \rangle^{2}P_Mu}_{L_t^{\frac{2r}{2r-1}}L_x^{\frac {2r}{2+r}}}\|P_N h\|_{L_x^2}\notag\\
			\lsm &\norm{\eta \langle M \rangle^{2}P_Mu}_{l_M^2L_t^{\frac{2r}{2r-1}}L_x^{\frac {2r}{2+r}}}.
		\end{align}
		Furthermore, by the H\"{o}lder, Minkowski and Sobolev inequalities, and  Lemma \ref{lem:littlewood-Paley}, we get
		\begin{align}\label{32515-12}
			\norm{\eta \langle M \rangle^{2}P_Mu}_{l_M^2L_t^{\frac{2r}{2r-1}}L_x^{\frac {2r}{2+r}}}\lsm& T^{1-\frac 1{2r}}\|\eta\|_{L_x^r}\|\langle M \rangle^{2}P_Mu\|_{L_t^{\infty}L_x^{2}l_M^2}\notag\\
			\lsm & T^{1-\frac 1{2r}}\|\eta\|_{L_x^r}\|u\|_{L_t^{\infty}F_{2}^{2, 2}}\notag\\
			\lsm &T^{1-\frac 1{2r}}\|\eta\|_{L_x^r}\|\langle \nabla \rangle^{2}u\|_{L_t^{\infty}L_x^{2}}.
		\end{align}
		Further, by the above two estimates,
		\begin{align}\label{63006-2}
			\eqref{63003-2}\lsm T^{1-\frac 1{2r}}\|\eta\|_{L_x^r}\|u\|_{L_t^{\infty}H_x^{2}}.
		\end{align}
	Collecting the estimates \eqref{63001-2} and \eqref{63006-2}, we finish the proof of this lemma.
	\end{proof}
	\subsubsection{High-order terms}
	\begin{lem}[High-order terms]\label{lem:nonlinear-estimate-higher-order-1d-2}
		Let $r>2$, and $I=[0, T) \subset\R^+$ with $T<1$. Then
		\EQ{
			\normbb{\int_0^te^{i(t-\rho)\partial_x^2}\widetilde{\mathcal{B}}(\eta, \eta u)d\rho}_{L_t^{\infty}L_x^2(I\times \R)} \lsm& T^{\frac 12}\|\eta\|_{L_x^r}^2\|u\|_{L_t^{\infty}H_x^{2}}.
		}
	\end{lem}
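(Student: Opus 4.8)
The plan is to deduce this high-order bound directly from the inhomogeneous Strichartz estimate \eqref{1.222234} together with the Coifman--Meyer bound for the bilinear operator $\widetilde{\mathcal{B}}$ from Lemma~\ref{resoant-decom-2}; unlike the range $1<r\le 2$, no local smoothing or interpolation step is needed here, because when $s=2$ the symbol $\tilde{m}(\xi_1,\xi_2)$ is an order-zero multiplier --- on its support $|\xi|\gtrsim N_0$ and $|\xi_2|\ll|\xi|\sim|\xi_1|$, so $\tilde{m}\sim 1$, and it satisfies the Coifman--Meyer hypotheses exactly as already noted for $m$.

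First I would fix the Schr\"odinger-admissible pair $(q,a):=\big(2r,\tfrac{2r}{r-2}\big)$; since $r>2$ one has $q=2r>4$, $a=\tfrac{2r}{r-2}>2$, and $\tfrac2q+\tfrac1a=\tfrac1r+\tfrac{r-2}{2r}=\tfrac12$, so $(q,a)$ is admissible and, crucially, off the endpoint. Applying \eqref{1.222234} with this pair on the right and $(\infty,2)$ on the left gives
\[
	\normbb{\int_0^t e^{i(t-\rho)\partial_x^2}\widetilde{\mathcal{B}}(\eta,\eta u)\,d\rho}_{L_t^{\infty}L_x^2(I\times\R)}\lsm \norm{\widetilde{\mathcal{B}}(\eta,\eta u)}_{L_t^{q'}L_x^{a'}(I\times\R)},
\]
with $q'=\tfrac{2r}{2r-1}$ and $a'=\tfrac{2r}{r+2}$. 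Next I would estimate the integrand pointwise in $t$: by Lemma~\ref{lem:Coifman-Meyer} (noting $\tfrac1{a'}=\tfrac1r+\tfrac12$, so $1<a'<2$ for $r>2$), then H\"older's inequality and the Sobolev embedding $H^2(\R)\hookrightarrow L^{2r/(r-2)}(\R)$,
\[
	\norm{\widetilde{\mathcal{B}}(\eta,\eta u)(t)}_{L_x^{a'}}\lsm\norm{\eta}_{L_x^r}\norm{(\eta u)(t)}_{L_x^2}\lsm\norm{\eta}_{L_x^r}^2\norm{u(t)}_{L_x^{2r/(r-2)}}\lsm\norm{\eta}_{L_x^r}^2\norm{u}_{L_t^{\infty}H_x^2}.
\]
Inserting this and using H\"older in time on $I=[0,T)$ pulls out the factor $T^{1/q'}=T^{1-\frac1{2r}}$, i.e. the left-hand side is $\lsm T^{1-\frac1{2r}}\norm{\eta}_{L_x^r}^2\norm{u}_{L_t^{\infty}H_x^2}$.

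Finally, since $r>2$ we have $1-\tfrac1{2r}>\tfrac34>\tfrac12$, so for $T<1$ the factor $T^{1-1/(2r)}$ is dominated by $T^{1/2}$, which is the asserted estimate. I expect the only genuinely delicate point to be the choice of admissible pair: the ``natural'' endpoint pair $(4,\infty)$ would force $\widetilde{\mathcal{B}}(\eta,\eta u)$ to be measured in $L_x^1$, where the Coifman--Meyer theorem is unavailable; the hypothesis $r>2$ (equivalently $\tfrac2r<1$) is precisely what leaves room to place the factor $u$ in the admissible, $H^2$-embedded space $L^{2r/(r-2)}$ while keeping the spatial exponent $a'$ strictly above $1$. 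Everything else is routine.
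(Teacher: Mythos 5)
Your argument is correct, and it uses the same toolkit as the paper (the retarded Strichartz estimate \eqref{1.222234}, the Coifman--Meyer bound of Lemma \ref{lem:Coifman-Meyer} for $\widetilde{\mathcal{B}}$, H\"older, and Sobolev embedding from $H^2$), but it organizes the exponents differently and more economically. The paper splits into two cases: for $r\ge 4$ it measures $\widetilde{\mathcal{B}}(\eta,\eta u)$ in $L_t^1L_x^2$ and places $u$ in $L_x^{2r/(r-4)}$, while for $2<r<4$ it uses the dual pair $L_t^{\frac{4r}{5r-4}}L_x^{r/2}$ and puts $u$ in $L_{t,x}^\infty$; in each case the resulting power of $T$ is then crudely bounded by $T^{1/2}$ since $T<1$. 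You instead fix the single admissible pair $(2r,\tfrac{2r}{r-2})$, whose dual $L_t^{\frac{2r}{2r-1}}L_x^{\frac{2r}{r+2}}$ is exactly the space the paper already employs for the resonance term in Lemma \ref{lem:nonlinear-estimate-resonance-1d-2}, apply Coifman--Meyer at the exponent $a'=\tfrac{2r}{r+2}\in(1,2)$ (legitimate since $\tfrac1{a'}=\tfrac1r+\tfrac12$ and $a'>1$ precisely because $r>2$), and place $\eta u$ in $L_x^2$ via H\"older and $H^2(\R)\hookrightarrow L^{2r/(r-2)}(\R)$; the time H\"older then gives $T^{1-\frac1{2r}}\le T^{1/2}$. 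All the exponent checks go through ($\tfrac{2}{2r}+\tfrac{r-2}{2r}=\tfrac12$, so the pair is admissible, and $1-\tfrac1{2r}>\tfrac34$), and the boundedness of $\tilde m$ as an order-zero Coifman--Meyer symbol on the region $|\xi|\gtrsim N_0$, $|\xi_2|\ll|\xi|\sim|\xi_1|$ is the same fact the paper already invokes. The net gain of your route is a uniform treatment of all $r>2$ with no case distinction; the paper's split buys nothing extra beyond slightly different (unused) powers of $T$.
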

	\begin{proof}
		When $r\geq 4$, by Strichartz's estimates and the Sobolev inequality,
\begin{align}\label{113-22}
\normbb{\int_0^te^{i(t-\rho)\partial_x^2}\widetilde{\mathcal{B}}(\eta, \eta u)d\rho}_{L_t^{\infty}L_x^2}\lsm &\normbb{\widetilde{\mathcal{B}}(\eta, \eta u)}_{L_t^{1}L_x^2}\notag\\
\lsm &\norm{\eta}_{L_x^r}^2\norm{u}_{L_t^{1}L_x^{\frac {2r}{r-4}}}\notag\\
\lsm & T\norm{\eta}_{L_x^r}^2\norm{u}_{L_t^{\infty}H_x^{2}}.
\end{align}
		When $2<r<4$, by the same way as above,
\begin{align}\label{113-222}
\normbb{\int_0^te^{i(t-\rho)\partial_x^2}\widetilde{\mathcal{B}}(\eta, \eta u)d\rho}_{L_t^{\infty}L_x^2}\lsm &\normbb{\widetilde{\mathcal{B}}(\eta, \eta u)}_{L_t^{\frac{4r}{5r-4}}L_x^{\frac r2}}\notag\\
\lsm &T^{\frac{5r-4}{4r}}\norm{\eta}_{L_x^r}^2\norm{u}_{L_{t, x}^{\infty}}\notag\\
\lsm & T^{\frac{5r-4}{4r}}\norm{\eta}_{L_x^r}^2\norm{u}_{L_t^{\infty}H_x^{2}}.
\end{align}
	Combining \eqref{113-22} and \eqref{113-222}, this gives the proof of this lemma.
	\end{proof}

Based on the above several estimates, the local-posedness in $H_x^{2}(\R)$ can be obtained by the standard contraction mapping principle, see for example the proof in subsection \ref{LWP-1}. Further, by the same way in subsection \ref{GWP}, we obtain the global well-posedness. Here, we omit the details.

\subsection{Ill-posedness in $H_x^{s}(\R)$, $s>2$}
Finally, we prove that for any $s>2$, there exists some $\eta \in L_x^r(\R)$ with $r>2$, such that the equation \eqref{eq:NLS} is ill-posed in $H_x^{s}(\R)$.
We prove the result in the similar way as before.
\begin{proof}[\bf{Proof}]
For our purpose, we set the parameters $M, N, L\geq 1$, which shall be determined later. On one hand, we choose the initial data
\EQ{
u_0(x):=\mathscr{F}^{-1}\big(L^{-\frac 12-s }\chi_{L\leq |\cdot|\leq 2L}(\xi)\big)(x).
}
Then we have
\EQ{\|u_0\|_{H_x^{s}}^2=\|\langle\xi\rangle^s \widehat{u_0}(\xi)\|_{L_{\xi}^2}^2\sim 1.
}
On the other hand, we choose the potential
\EQ{
\eta(x)=N^{-1+\frac 1r}\mathscr{F}^{-1}\big(\chi_{\sqrt{\frac {\pi}{3}}M\leq |\cdot|\leq \sqrt{\frac {\pi}{3}}M+N}(\xi)\big)(x).
}
Then we have
\EQ{
\widehat{\eta}(\xi)=N^{-1+\frac 1r}\chi_{\sqrt{\frac {\pi}{3}}M\leq |\cdot|\leq \sqrt{\frac {\pi}{3}}M+N}(\xi).
}
Moreover, noting $\chi_{\sqrt{\frac {\pi}{3}}M\leq |\cdot|\leq \sqrt{\frac {\pi}{3}}M+N}(\xi)$ is a Schwartz function, hence for any $r>2$, we have
\EQ{
\norm{\eta}_{L_x^r}\lsm \norm{\widehat{\eta}}_{L_{\xi}^{r'}}\lsm N^{-1+\frac 1r} N^{\frac 1{r'}} =1,
}
where $r'$ satisfies $\frac 1r+\frac 1{r'}=1$.

Now, we define
\EQ{
C [u_0]&=\int_0^t e^{-is\partial_x^2}(\eta e^{is\partial_x^2}u_0)ds.\\
}
We aim to prove that for any $T>0$ and $s>2$,
\EQ{
\sup\limits_{t\in [0,T]}\|C[u_0]\|_{H_x^{s}(\R)}\rightarrow \infty, \mbox{ as }M\rightarrow \infty.
}
For our purpose, we set
$$
t\triangleq \frac 1{M^2},
$$
and
$$
\Omega=\{\xi: \sqrt{\frac {\pi}{3}}M+\frac N4\leq|\xi|\leq \sqrt{\frac {\pi}{3}}M+\frac 34N\},
$$
For $C[u_0]$, by the integration-by-parts and the choice of $u_0$ and $\eta$, we have
\begin{align}\label{621656}
\widehat{C [u_0]}(\xi)=&N^{-1+\frac 1r}L^{-\frac 12-s }\int_0^t \int_{\xi=\xi_1+\xi_2}e^{is(|\xi|^2-|\xi_2|^2)}\chi_{\sqrt{\frac {\pi}{3}}M\leq |\cdot|\leq \sqrt{\frac {\pi}{3}}M+N}(\xi_1)\chi_{L\leq |\cdot|\leq 2L}(\xi_2)d\xi_2ds\notag\\
=&N^{-1+\frac 1r}L^{-\frac 12-s }\int_{\xi=\xi_1+\xi_2}\frac{e^{it(|\xi|^2-|\xi_2|^2)}-1}{i(|\xi|^2-|\xi_2|^2)}\chi_{\sqrt{\frac {\pi}{3}}M\leq |\cdot|\leq \sqrt{\frac {\pi}{3}}M+N}(\xi_1)\chi_{L\leq |\cdot|\leq 2L}(\xi_2)d\xi_2.
\end{align}
Hence, taking the real part of $\widehat{C [u_0]}(\xi)$, we have
\begin{align}\label{6121}
\mbox{Re}\big(\widehat{C [u_0]}\big)(\xi)=N^{-1+\frac 1r}L^{-\frac 12-s }\int_{\xi=\xi_1+\xi_2}\frac{\sin[t(|\xi|^2-|\xi_2|^2)]}{|\xi|^2-|\xi_2|^2}\chi_{\sqrt{\frac {\pi}{3}}M\leq |\cdot|\leq \sqrt{\frac {\pi}{3}}M+N}(\xi_1)\chi_{L\leq |\cdot|\leq 2L}(\xi_2)d\xi_2.
\end{align}
By the mean value theorem, we have
\EQ{
\sin[t(|\xi|^2-|\xi_2|^2)]=\sin (t|\xi|^2)+O(t|\xi_2|^2).
}
Now, we take $L=\frac N8 \ll M$.  Noting that if $\xi \in \Omega$, then  $t|\xi|^2\sim \frac{\pi}{3}$, which further implies $\sin (t|\xi|^2)\ge \frac 12$. Moreover, by $L\ll M$, we have $t|\xi_2|^2\sim \frac {L^2}{M^2}\ll 1$. Hence, we conclude that
\begin{align}\label{621}
\sin[t(|\xi|^2-|\xi_2|^2)]\geq \frac 14.
\end{align}
By the estimates \eqref{6121} and \eqref{621}, we obtain
\begin{align}\label{6122}
\mbox{Re}\big(\widehat{C [u_0]}\big)(\xi)\geq \frac 14 N^{-1+\frac 1r}L^{-\frac 12-s }\int_{\xi=\xi_1+\xi_2}\frac{1}{|\xi|^2-|\xi_2|^2}\chi_{\sqrt{\frac {\pi}{3}}M\leq |\cdot|\leq \sqrt{\frac {\pi}{3}}M+N}(\xi_1)\chi_{L\leq |\cdot|\leq 2L}(\xi_2)d\xi_2.
\end{align}
Further, noting $\mbox{Re}\big(\widehat{C [u_0]}\big)(\xi)>0$, the above inequality yields that
\begin{align*}
\|C [u_0]\|_{H_x^{s}(\R)}= \norm{\langle\xi\rangle^{s}\widehat{C[u_0]}(\xi)}_{L_{\xi}^2(\R)}\geq \norm{\langle\xi\rangle^{s}\mbox{Re}\big(\widehat{C[u_0]}\big)(\xi)}_{L_{\xi}^2(\R)}.
\end{align*}
Finally, combing the estimate \eqref{6122}, we get
\begin{align*}
\|C [u_0]\|_{H_x^{s}(\R)}\geq& C M^{s}N^{-1+\frac 1r}L^{-\frac 12-s }\norm{\int_{\R}\frac{1}{|\xi|^2-|\xi_2|^2}\chi_{L\leq |\cdot|\leq 2L}(\xi_2)d\xi_2}_{L_{\xi}^2(\Omega)}\\
\geq & CM^{s-2}N^{-1+\frac 1r}L^{-\frac 12-s }\norm{\int_{\R}\chi_{L\leq |\cdot|\leq 2L}(\xi_2)d\xi_2}_{L_{\xi}^2(\Omega)}\\
\geq & CM^{s-2}N^{-1+\frac 1r}L^{-\frac 12-s }LN^{\frac 12}\\
\geq & C(N, L) M^{s-2},
\end{align*}
where $C(N, L)>0$ is a finite constant. Hence, any $T>0$ and $s>2$, we have
\begin{align}\label{Bu0-1D2}
\sup\limits_{t\in [0,T]}\|C [u_0]\|_{H_x^{s}(\R)}\rightarrow \infty, \mbox{ as }M\rightarrow \infty.
\end{align}
The proof of ill-posedness is done by applying Lemma \ref{ill-posed}. Hence, we complete the proof of Theorem \ref{theorem 3}.	
\end{proof}

	\vskip 0.2cm

\end{document}